\definecolor{hypercolor}{HTML}{003399}
\newtheorem{thm}{Theorem}[section]
\newtheorem{lem}[thm]{Lemma}
\newtheorem{prop}[thm]{Proposition}
\newtheorem{cor}[thm]{Corollary}
  {\endinnercustomthm}
\newenvironment{customprop}[1]
  {\innercustomprop}%
  {\endinnercustomprop}
\newenvironment{customlem}[1]
  {\innercustomlem}%
  {\endinnercustomlem}
\newenvironment{customcor}[1]
  {\innercustomcor}%
  {\endinnercustomcor}
\theoremstyle{definition}
\newtheorem{rmk}[thm]{Remark}
\numberwithin{equation}{section}
\newcommand*{\Cdot}{{\raisebox{-0.5ex}{\scalebox{1.8}{$\cdot$}}}} 
\acrodef{LDP}{Large Deviation Principle}
\acrodef{KPZ}{Kardar--Parisi--Zhang}
\acrodef{SHE}{Stochastic Heat Equation}
\acrodef{WNT}{Weak Noise Theory}
\acrodef{LPP}{Last Passage Percolation}
\newcommand{\e}{\varepsilon}
\newcommand{\rt}{[0,T]\times\R}				
\newcommand{\rtz}{[\cuttime,T]\times\R}		
\newcommand{\textbb}{\mathrm{bb}}			
\newcommand{\R}{\mathbb{R}}
\newcommand{\Z}{\mathbb{Z}}
\newcommand{\Zn}{\Z_{\geq 0}}       
\newcommand{\Zp}{\Z_{\geq 1}}
\newcommand{\Csp}{C}						
\newcommand{\Lsp}{L}						
\newcommand{\Hsp}{H}						
\newcommand{\bana}{\mathcal{B}}				
\newcommand{\banaE}{E}						
\newcommand{\hilb}{\mathcal{H}}				
\newcommand{\hh}{h}							
\newcommand{\dev}{\rho}						
\newcommand{\devm}{\rho_*}					
\newcommand{\hm}{\hfn_*}					
\newcommand{\scl}{\lambda}					
\newcommand{\Path}{\gamma}					%
\newcommand{\bm}{B}							
\newcommand{\bb}{B_\text{b}}				
\newcommand{\cuttime}{\eta}					
\newcommand{\hermite}{H}					
\newcommand{\ini}{g_*}						
\newcommand{\bsr}{I_*}               
\newcommand{\hk}{p}							
\newcommand{\hkk}[3]{\hk(#1,#2-#3)}			
\newcommand{\hkn}[2]{p(#1, #2)}
\newcommand{\tside}{\tau}					
\newcommand{\xside}{\zeta}					
\newcommand{\ZZ}{Z}							
\newcommand{\ZZe}{\ZZ_\e} 
\newcommand{\chaos}{Y}						
\newcommand{\pair}{W}						
\newcommand{\Zfn}{\mathsf{Z}}				
\newcommand{\chaosfn}{\mathsf{Y}}			
\newcommand{\hfn}{\mathsf{h}}				
\newcommand{\energy}{U}						
\newcommand{\rateone}{\Phi}					
\newcommand{\rate}{I}
\newcommand{\Vertices}{\mathcal{V}}			
\newcommand{\Edges}{\mathcal{E}}			
\newcommand{\edge}{\mathbf{e}}				
\renewcommand{\P}{\mathbb{P}}				
\newcommand{\E}{\mathbb{E}}					
\renewcommand{\d}{\mathrm{d}}				
\newcommand{\ind}{\mathbf{1}}				
\newcommand{\set}[1]{\{#1\}}
\newcommand{\norm}[1]{\Vert #1\Vert}		
\newcommand{\normL}[1]{\Vert #1\Vert_{L^2}}	
\newcommand{\ip}[1]{\langle #1\rangle}		
\newcommand{\normSup}[2]{\Vert #1 \Vert_{#2}}
\newcommand{\snormH}[2]{ [ #1 ]_{#2} }		
\renewcommand{\bar}{\overline}
\newcommand{\til}{\widetilde}
\title{Short time large deviations of the KPZ equation}
\author{Yier Lin and Li-Cheng Tsai}
\address[Yier Lin]{\hspace{19pt}Department of Mathematics, Columbia University}
\address[Li-Cheng Tsai]{Departments of Mathematics, Rutgers University --- New Brunswick}
\begin{document}
\begin{abstract}
We establish the Freidlin--Wentzell Large Deviation Principle (LDP) 
for the Stochastic Heat Equation with multiplicative noise in one spatial dimension.
That is, we introduce a small parameter $ \sqrt{\e} $ to the noise,
and establish an LDP for the trajectory of the solution.
Such a Freidlin--Wentzell LDP gives the short-time, one-point LDP for the KPZ equation
in terms of a variational problem.
Analyzing this variational problem under the narrow wedge initial data,
we prove a quadratic law for the near-center tail and a $ \frac52 $ law for the deep lower tail.
These power laws confirm existing physics predictions \cite{kolokolov07,kolokolov09,meerson16,ledoussal16short,kamenev16}.
\end{abstract}

\maketitle

\section{Introduction} \label{s.intro}

In this paper we study the \ac{KPZ} equation in one spatial dimension
\begin{align}
	\label{e.kpz}
	\partial_t \hh = \tfrac12 \partial_{xx} \hh + \tfrac12 (\partial_x \hh)^2 + \xi,
\end{align}
where $ \hh=\hh(t,x) $, $ (t,x)\in(0,\infty)\times\R $, and $ \xi=\xi(t,x) $ denotes the spacetime white noise.
The equation was introduced by \cite{kardar1986dynamic} 
to describe the evolution of a randomly growing interface,
and is connected to many physical systems including 
directed polymers in a random environment, last passage percolation, randomly stirred fluids, and interacting particle systems.
The equation exhibits integrability and has statistical distributions related to random matrices. 
We refer to \cite{ferrari2010random,quastel2011introduction,corwin2012kardar,quastel2015one,chandra2017stochastic,corwin2019} and the references therein for 
the mathematical study of and related to the KPZ equation.

Due to the roughness of $ \hh $, 
the term $ (\partial_x\hh)^2 $ in \eqref{e.kpz} does not make literal sense,
and the well posedness of the \ac{KPZ} equation requires renormalization \cite{hairer14,gubinelli2015paracontrolled}.
In this paper we work with the notion of Hopf--Cole solution.
Informally exponentiating $ \ZZ = \exp(\hh) $ brings the \ac{KPZ} equation to the \ac{SHE}
\begin{align}
	\label{e.she}
	\partial_t \ZZ = \tfrac12 \partial_{xx} \ZZ + \xi \ZZ.
\end{align}
It is standard to establish the well posedness of \eqref{e.she} by chaos expansion;
see Section~\ref{s.chaos.fnvalued} for more discussions on Wiener chaos.
For a function-valued initial data $ \ZZ(0,\Cdot) \geq 0 $ that is not identically zero,
\cite{mueller91} showed that $ \ZZ(t,x)>0 $ for all $ t>0 $ and $ x\in\R $ almost surely.
The Hopf--Cole solution of the \ac{KPZ} equation is then defined as $ \hh:=\log\ZZ $.
This notion of solution coincides with that of \cite{hairer14,gubinelli2015paracontrolled} under suitable assumptions.
An often considered initial data is to start the \ac{SHE} from a Dirac delta at the origin, i.e., $ \ZZ(0,\Cdot)=\delta_0(\Cdot) $,
which is referred to as the narrow wedge initial data for $ \hh $.
For such an initial data, \cite{flores14} established the positivity for $ \ZZ(t,x) $ so that the Hopf--Cole solution $ \hh:=\log\ZZ $ is well-defined.

Large deviations of the KPZ equation have been intensively studied in the mathematics and physics communities in recent years. 
Results are quite fruitful in the long time regime, $ t\to\infty $.
For the narrow wedge initial data, physics literature predicted that the one-point, lower-tail \ac{LDP} rate function
should go through a crossover from a cubic power to a $ \frac52 $ power \cite{krajenbrink2018simple}.
(The prediction of the $ \frac52 $ power actually first appeared in the short time regime; see the discussion about the short time regime below.)
The work \cite{corwin20lower} derived rigorous, detailed bounds on the one-point tail probabilities for the narrow wedge initial data
and in particular proved the cubic-to-$ \frac52 $ crossover.
Similar bounds are obtained in \cite{corwin20general} for general initial data.
The exact lower-tail rate function were derived in the physics works \cite{sasorov17,corwin18, krajenbrink18, ledoussal19}, and was rigorously proven in \cite{tsai18, cafasso19}. 
Each of these works adopts a different method.
In \cite{krajenbrink2019linear}, the four methods in \cite{sasorov17,corwin18,krajenbrink18, tsai18} were shown to be closely related.
As for the upper tail,
the physics work \cite{ledoussal16long} derived a $ \frac32 $ power law for the entire rate function under the narrow wedge initial data,
and \cite{das19} gave a rigorous proof for this upper-tail \ac{LDP}.
The work \cite{ghosal20} extended this upper-tail \ac{LDP} to general initial data.

For the finite time regime, $ t\in(0,\infty) $ fixed,
motivated by studying the positivity or regularity (of the one-point density) of the \ac{SHE} or related equations,
the works \cite{mueller91,mueller2008regularity,flores14,chen16,hu2018asymptotics} established tail probability bounds of the \ac{SHE} or related equations.

In this paper we focus on \emph{short} time large deviations of the \ac{KPZ} equation.
Employing the \ac{WNT}, the physics works \cite{kolokolov07,kolokolov09,meerson16,kamenev16} 
predicted that the one-point, lower-tail rate function should crossover from a quadratic power law to a $ \frac52 $ power law for the narrow wedge and flat initial data.
By analyzing an exact formula, the physics work \cite{ledoussal16short} 
obtained the entire one-point rate function for the narrow wedge initial data; see Section~\ref{s.Phi}.
This was confirmed by the numerical result \cite{hartmann2018high}.
From this one-point rate function \cite{ledoussal16short} also demonstrated the crossover.
The quadratic power arises from the Gaussian nature of the \ac{KPZ} equation in short time,
while the $ \frac52 $ power appears to be a persisting trait of the deep lower tail of the \ac{KPZ} equation in all time regimes.
Our main result gives the first proof of the short time \ac{LDP} for the \ac{KPZ} equation and the quadratic-to-$ \frac52 $ crossover.

\begin{thm}
\label{t.main}
Let $ h $ denote the solution of the \ac{KPZ} equation \eqref{e.kpz} with the initial data $ \ZZ(0,\Cdot)=\delta_0(\Cdot) $.
\begin{enumerate}[leftmargin=20pt, label=(\alph*)]
\item \label{t.main.exist}
For any $ \scl>0 $, the limits exist
\begin{align*}
	\lim_{t\to 0 }
	t^\frac12 \log \P\big[ \hh(2t,0) + \log\sqrt{4\pi t} \leq -\scl \big] &=: -\rateone(-\scl),
\\
	\lim_{t\to 0 }
	t^\frac12 \log \P\big[ \hh(2t,0) + \log\sqrt{4\pi t} \geq \scl \big] &=: -\rateone(\scl).
\end{align*}
\item \label{t.main.bulk}
$
	\displaystyle
	\lim_{\scl\to 0 } \scl^{-2} \rateone(\scl)
	=
	\tfrac{1}{\sqrt{2\pi}}.
$
\item \label{t.main.lower}
$
	\displaystyle
	\lim_{\scl\to\infty} \scl^{-\frac52} \rateone(-\scl) = \tfrac{4}{15\pi}.
$
\end{enumerate}
\end{thm}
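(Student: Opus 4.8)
The plan is to derive Part~\ref{t.main.exist} from the Freidlin--Wentzell \ac{LDP} for the \ac{SHE} combined with the exact scaling of the narrow-wedge solution, and then to read off Parts~\ref{t.main.bulk} and~\ref{t.main.lower} by analysing the resulting variational problem. Concretely: let $\ZZ$ be the narrow-wedge solution of \eqref{e.she} and, for $\e>0$, let $\ZZe$ solve $\partial_t\ZZe=\frac12\partial_{xx}\ZZe+\sqrt\e\,\xi\ZZe$ from the same data; the scaling invariance in law of spacetime white noise gives $\{\ZZe(t,x)\}\overset{d}{=}\{\e\,\ZZ(\e^2 t,\e x)\}$, so with $\e=\sqrt t$ and $\hfn_\e:=\log\ZZe$,
\[
  \hh(2t,0)+\log\sqrt{4\pi t}\ \overset{d}{=}\ \hfn_\e(2,0)-\log p(2,0),\qquad \log p(2,0)=-\tfrac12\log(4\pi) .
\]
One then invokes the Freidlin--Wentzell \ac{LDP} --- which this paper establishes at speed $\e^{-1}$ with a good rate function $\rate$ on trajectory space, first on $\rtz$ with the limit $\cuttime\to0$ taken afterwards, the boundary behaviour $\Zfn(t,\Cdot)\to\delta_0$ entering $\rate$ as a constraint --- and applies the contraction principle to $\Zfn\mapsto\log\Zfn(2,0)$, continuous on $\{\rate<\infty\}$. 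This gives an \ac{LDP} for $\hfn_\e(2,0)$ with rate $\rateone_0(r):=\inf\{\rate(\Zfn):\log\Zfn(2,0)=r\}$, hence Part~\ref{t.main.exist} with $\rateone(\pm\scl)=\rateone_0(\log p(2,0)\pm\scl)$; turning the \ac{LDP} bounds on the half-lines into the stated limits uses continuity and one-sided monotonicity of $\rateone_0$ about its minimizer $r=\log p(2,0)$.

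For Part~\ref{t.main.bulk} I would linearize about the heat flow: writing $\Zfn=p\,e^{v}$ gives $\rate(\Zfn)=\frac12\int_0^2\!\int_\R\bigl(\mathcal L v-\tfrac12(\partial_x v)^2\bigr)^2\,\d x\,\d t$ with $\mathcal L v:=\partial_t v-\frac12\partial_{xx}v+\frac xt\partial_x v=\frac1p(\partial_t-\tfrac12\partial_{xx})(pv)$. A continuity (parabolic-regularity) argument shows that low-$\rate$ trajectories have $v$ uniformly small, so the quartic term is negligible, and with $u:=pv$, $u|_{t=0}=0$, $u(t,x)=\int_0^t\!\int_\R p(t-s,x-y)(pf)(s,y)\,\d y\,\d s$, the problem reduces to minimizing $\frac12\|f\|_{L^2([0,2]\times\R)}^2$ subject to $\int_0^2\!\int_\R p(2-s,y)p(s,y)f(s,y)\,\d y\,\d s=\mu\,p(2,0)$, $\mu:=r-\log p(2,0)$. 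Its value is $\mu^2 p(2,0)^2/(2N)$, $N:=\int_0^2\!\int_\R p(2-s,y)^2 p(s,y)^2\,\d y\,\d s$, and the Gaussian evaluation $N=\frac1{4\pi}\sqrt{\pi/2}$ --- using $\int_\R p(2-s,y)^2 p(s,y)^2\,\d y=(4\pi^2 s(2-s))^{-1}\sqrt{\pi s(2-s)/2}$ and $\int_0^2(s(2-s))^{-1/2}\,\d s=\pi$ --- together with $p(2,0)^2=\frac1{4\pi}$ gives $\rateone_0(\log p(2,0)+\mu)\sim\mu^2/\sqrt{2\pi}$, i.e.\ $\scl^{-2}\rateone(\pm\scl)\to\frac1{\sqrt{2\pi}}$.

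For Part~\ref{t.main.lower} I would rescale the minimizer of $\rateone_0(\log p(2,0)-\scl)$ self-similarly as $\scl\to\infty$: $x=\scl^{1/2}\til x$, $\hfn=\scl\,\til\hfn$, $V=\scl\,\til V$, with $t\in[0,2]$ unchanged. Then the narrow-wedge data $\hfn\sim-x^2/(2t)$ is invariant, the constraint becomes $\til\hfn(2,0)\to-1$, the diffusive term $\frac12\partial_{xx}\hfn$ is subleading, and $\rate=\scl^{5/2}\bigl(\frac12\int_0^2\!\int_\R(\partial_t\til\hfn-\tfrac12(\partial_{\til x}\til\hfn)^2)^2\,\d\til x\,\d t+o(1)\bigr)$, reducing the problem to the $\scl$-independent inviscid variational problem
\[
  \psi:=\inf\Bigl\{\tfrac12\!\int_0^2\!\int_\R\bigl(\partial_t\til\hfn-\tfrac12(\partial_{\til x}\til\hfn)^2\bigr)^2\,\d\til x\,\d t:\ \til\hfn\to-\tfrac{\til x^2}{2t}\ (t\to0),\ \til\hfn(2,0)=-1\Bigr\},
\]
which is to be shown equal to $\frac4{15\pi}$. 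Its Euler--Lagrange equations form the inviscid \ac{WNT} (macroscopic fluctuation theory) system $\partial_t\til\hfn=\frac12(\partial_{\til x}\til\hfn)^2+\til V$, $\partial_t\til V=\partial_{\til x}(\til V\,\partial_{\til x}\til\hfn)$, with the Dirac singularity resolved by matched asymptotics near $t=0$; this system admits an explicit ``instanton'' solution, and evaluating the action on it yields a Beta-type integral $\int_0^1 u^{1/2}(1-u)\,\d u=\frac4{15}$ times Gaussian normalizations, giving $\psi=\frac4{15\pi}$ and hence $\rateone(-\scl)\sim\frac4{15\pi}\scl^{5/2}$.

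The main obstacle is Part~\ref{t.main.lower}: justifying the passage to the inviscid rescaled problem is singular --- the $\frac12\partial_{xx}$ term is discarded and the Dirac data must be resolved in a boundary layer --- and while the upper bound follows from an explicit trial trajectory built from the instanton, the matching lower bound requires a priori estimates pinning every near-optimal trajectory to the self-similar profile; solving the limiting problem and computing its action exactly is the remaining computational core. Secondary difficulties supporting Part~\ref{t.main.exist} are establishing enough regularity of the variational rate function $\rateone_0$ (continuity, one-sided monotonicity, ideally convexity) to upgrade the \ac{LDP} to genuine limits, and controlling the $\cuttime\to0$ reduction to the Dirac initial data.
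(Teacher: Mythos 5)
Your Part~\ref{t.main.exist} matches the paper's route (Freidlin--Wentzell LDP for the SHE on $\Csp_a(\rtz)$ for fixed $\cuttime>0$, then contraction at $(2,0)$), and your scaling identity for $\ZZe$ is correct; one small point is that no $\cuttime\to0$ limit is needed, since the contraction is at $t=2>\cuttime$. Your Part~\ref{t.main.bulk} takes a genuinely different route: you linearize $\Zfn=\hk\,e^{v}$ and argue that near-minimizers have $v$ uniformly small so the quartic term is negligible, whereas the paper bypasses any regularity-of-minimizers step by working directly with the Duhamel/chaos expansion of $\Zfn(\dev)$, using Lemma~\ref{l.chaosfn.bd.nw} to show $\sqrt{4\pi}\Zfn(\dev;2,0)=1+\sqrt{4\pi}\int\dev\,\hk(2-s,y)\hk(s,y)\,\d y\,\d s+O(\scl^2)$ whenever $\normL{\dev}\le\scl\le1$, then applies Cauchy--Schwarz; both computations produce the same constant $N=2^{-5/2}\pi^{-1/2}$ and the answer $1/\sqrt{2\pi}$, so your alternative is sound, though you owe a proof of the ``$v$ uniformly small'' claim that the paper's chaos bound replaces for free.

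For Part~\ref{t.main.lower} there is a genuine gap: the route you propose --- pass to the rescaled inviscid variational problem $\psi$, derive its Euler--Lagrange (WNT/MFT) system $\partial_t\til\hfn=\tfrac12(\partial_{\til x}\til\hfn)^2+\til V$, $\partial_t\til V=\partial_{\til x}(\til V\partial_{\til x}\til\hfn)$, solve for an instanton, and then show every near-minimizer converges to it --- is exactly the physics heuristic the paper explicitly identifies (Section~\ref{s.intro.phys.challenge}) as ``challenging'' to make rigorous, because the limiting conservation-law system has non-unique weak solutions (like inviscid Burgers) and controlling the vanishing-viscosity limit plus the Dirac singularity is open. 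You honestly flag this as the main obstacle, but you do not supply a mechanism to overcome it. The paper's actual argument avoids the inviscid PDE altogether: it goes to the level of the SHE via the Feynman--Kac formula \eqref{e.feynmankac}, defines $\hfn_\scl(\dev;t,x)$ by the scaling in \eqref{e.feynmankac.scl}, uses Varadhan's lemma (Lemma~\ref{l.devm.varadhan}) to convert $\hfn_\scl(\devm;\cdot)\to\hm$ into a variational problem over Brownian-bridge paths, identifies the geodesics for $(2,0)$ as $\alpha\ell(\cdot)$, $|\alpha|\le1$ (Proposition~\ref{p.geodesic}), and --- crucially for the matching lower bound --- applies a Girsanov shift plus Jensen's inequality to get \eqref{e.after.jensen}, substitutes the whole one-parameter family $\Path=\alpha\ell$, integrates over $\alpha$ against the weight $-\tfrac{1}{2\pi}(1-\alpha^2)_+$, and deduces the approximate orthogonality $\ip{\devm,\devm-\dev}\le o_\scl(1)(1+\normL{\dev})$ \eqref{e.quantitative}, from which $\normL{\devm}^2\le(1+o(1))\normL{\dev}^2+o(1)$ follows by the parallelogram identity. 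This lower-bound mechanism --- the quadratic structure of $\normL{\cdot}^2$ combined with the explicit lens-shaped family of geodesics --- is the key idea absent from your plan, and without it the ``matching lower bound'' you acknowledge needing does not materialize.
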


\begin{rmk}
Our method works also for the flat initial data $ \hh(0,x) \equiv 0 $,
but we treat only the narrow wedge initial data to keep the paper at a reasonable length.

Our result generalizes immediately to $ h(2t,x) $, for $ x\in\R $.
This is because, under the delta initial data, the one-point law of $ Z(2t,x)/\hk(2t,x) $ does not depend on $ x $. 
This fact can be verified from the Feynman--Kac formula for the \ac{SHE}. 
\end{rmk}

\begin{rmk}
Even though \ac{LDP} rate functions are model dependent,
the $ \frac52 $ tail seems to be somewhat ubiquitous in the \ac{KPZ} class.
It shows up in all time regimes for the \ac{KPZ} equation,
and has also been observed in the TASEP \cite{derrida98}.
A very interesting question is to investigate to what extend is the $ \frac52 $ tail universal,
and to find a unifying approach to understand the origin of the tail. %
\end{rmk}

\begin{rmk}\label{r.ut}
The aforementioned physics works \cite{kolokolov09,meerson16,ledoussal16short,kamenev16} also derived the asymptotics of the deep upper tail.
The prediction is $ \lim_{\scl\to\infty} \scl^{-3/2} \rateone(\scl) = \frac{4}{3} $.
We leave this question for future work.
\end{rmk}

\begin{rmk}
The short-time large deviations for the KPZ equation were also studied under other initial data or on a half-line. 
For the KPZ equation starting from Brownian initial data, the problem was studied in physics works \cite{krajenbrink17short, meerson2017height}. 
For the half-line KPZ equation, the same problem was studied in the physics work \cite{krajenbrink2018large, meerson2018large}; see also
\cite{krajenbrink2019beyond} for a summary of these results. 
It is interesting to see whether our method generalizes in these situations.
\end{rmk}

Let us emphasize that, 
even though we follow the overarching idea of the \ac{WNT},
our method \emph{significantly differs} from existing physics heuristics. As will be explained below, the \ac{WNT} amounts to establishing a Freidlin--Wentzell LDP and analyzing the corresponding variational problem.
The second step --- analyzing the variational problem --- is the \emph{harder} step. The physics works \cite{kolokolov09,meerson16,kamenev16} provide convincing heuristic for this step by a formal PDEs argument. However, as will be explained in Section~\ref{s.intro.phys.challenge}, to make this PDE argument rigorous requires elaborate treatments and seems challenging. We hence adopt a different method.

In Section~\ref{s.intro.phys}, we will recall the physics heuristic from \cite{kolokolov09,meerson16,kamenev16}
and explain why it seems challenging to make the heuristic rigorous.
In Section~\ref{s.intro.ourmethod}, we will explain our method for proving Theorem~\ref{t.main}.

\subsection{Discussions about the physics heuristics}
\label{s.intro.phys}
Here we recall the method used in the physics works \cite{kolokolov09,meerson16,kamenev16}.
The first step is to perform scaling to turn the short-time \ac{LDP} into a Freidlin--Wentzell \ac{LDP}.
One scales
\begin{align}
	\label{e.scaling}
	\hh_\e(t,x) := \hh( \e t, \e^{1/2} x) + \log(\e^{1/2}),
\end{align}
which brings the \ac{KPZ} equation into
\begin{align}
	\label{e.kpz.scale}
	\partial_t \hh_\e = \tfrac12 \partial_{xx} \hh_\e + \tfrac12 (\partial_x \hh_\e)^2 + \sqrt{\e}\xi.
\end{align}
The term $ \log(\e^{1/2}) $ in \eqref{e.scaling} ensures that the narrow wedge initial data stays invariant.
The equation \eqref{e.kpz.scale} is in the form for studying Freidlin--Wentzell \acp{LDP}.
Roughly speaking, for a generic $ \dev\in\Lsp^2(\rt) $,
we expect $ \P[ \sqrt{\e}\xi\approx\dev ] \approx \exp(-\frac12 \e^{-1}\normL{\dev}^2) $.
When the event $ \{\sqrt{\e}\xi\approx\dev\} $ occurs, one expects $ \hh_\e $ to approximate the solution $ \hfn=\hfn(\dev;t,x) $ of
\begin{align}
	\label{e.kpz.ld}
	\partial_t \hfn = \tfrac12 \partial_{xx} \hfn + \tfrac12 (\partial_x \hfn)^2 + \dev.
\end{align} 
In more formal terms,
one expects $ \{\hh_\e\} $ to satisfy an \ac{LDP} with speed $ \e^{-1} $
and the rate function $ J(f) = \inf\{ \frac{1}{2}\normL{\dev} : \hfn(\dev) = f \} $.
Once such an \ac{LDP} is established in a suitable space,
by the contraction principle we should have
\begin{align}
	\label{e.wnt.variational.}
	\rateone(\scl) = - \lim_{\e\to 0} \e \log \P\big[ \hh_\e(2,0) \geq \scl \big] 
	&=
	\inf\big\{ \tfrac{1}{2}\normL{\dev}^2 \,:\, \hfn(\dev;2,0) \geq \scl \big\},&
	&
	\scl>0,
\\
	\label{e.wnt.variational}
	\rateone(-\scl) = - \lim_{\e\to 0} \e \log \P\big[ \hh_\e(2,0) \leq -\scl \big] 
	&=
	\inf\big\{ \tfrac{1}{2}\normL{\dev}^2 \,:\, \hfn(\dev;2,0) \leq -\scl \big\},&
	&
	-\scl<0.
\end{align}

To find the infimum in \eqref{e.wnt.variational},
one can perform variation of $ \frac12 \normL{\dev}^2 = \frac12 \int_0^2 \int_{\R} \dev^2 \, \d x\d t $ in $ \dev $ 
under the constraint $ \hfn_\scl(\dev;2,0) = -\scl $, c.f., \cite[Sect~A, Supplementary Material]{meerson16}.
The result suggests that any minimizer $ \dev $ should solve
\begin{align}
	\label{e.dev.eq}
	\partial_t \dev = -\tfrac12 \partial_{xx} \dev + \partial_x (\dev\,\partial_x\hfn).
\end{align} 
With a negative Laplacian $ -\tfrac12 \partial_{xx}\dev $, 
the equation \eqref{e.dev.eq} needs to be solved \emph{backward} in time from the terminal data $ \dev(2,x) = - c(\scl) \delta_0(x) $, c.f., \cite[Sect~A, Supplementary Material]{meerson16},
where $ c(\scl)>0 $ is a constant fixed by $ \hfn(\dev;2,0) = -\scl $.

In the near-center regime, i.e., $ \scl \to 0 $, standard perturbation arguments can be applied to analyze \eqref{e.kpz.ld} and \eqref{e.dev.eq} to conclude the quadratic power law.

We will focus on the deep lower tail regime, i.e., $ -\scl\to-\infty $.
We scale $ \scl^{-1}\hfn(\dev;t,\scl^{1/2}x) \mapsto \hfn(\dev;t,x) $ and $ \scl^{-1} \dev(t,\scl^{1/2}x) \mapsto \dev(t,x) $.
To see why such scaling is relevant,
note that, under the conditioning $ \hfn(\dev;2,0) \leq -\scl $, it is natural to scale $ \hfn $ by $ \scl^{-1} $. 
Time cannot be scaled since we are probing $ \hfn $ at $ t=2 $.
After scaling $ \hfn $ by $ \scl^{-1} $,
we find that the quadratic term $ \frac12(\partial_x\hfn)^2 $ in \eqref{e.kpz.ld} gains an excess $ \scl $ factor compared to the left hand side.
To bring the quadratic term back to the same footing as the left hand side, we scale $ x $ by $ \scl^{-1/2} $.
Similar considerations lead to the same scaling of $ \dev $. %
Under such scaling the equations \eqref{e.kpz.ld} and \eqref{e.dev.eq} become
\begin{align}
	\label{e.kpz.ld.scl}
	\partial_t \hfn &= \tfrac12 \scl^{-1} \partial_{xx} \hfn + \tfrac12 (\partial_x \hfn)^2 + \dev,
\\
	\label{e.dev.eq.scl}
	\partial_t \dev &= -\tfrac12 \scl^{-1} \partial_{xx} \dev + \partial_x (\dev \,\partial_x \hfn ).
\end{align} 
As $ \scl\to\infty $ it is tempting to drop the Laplacian terms in \eqref{e.kpz.ld.scl}--\eqref{e.dev.eq.scl}. Doing so produces
\begin{align}
	\label{e.kpz.ld.}
	\partial_t \hfn &= \tfrac12 (\partial_x \hfn)^2 + \dev,
\\	
	\label{e.dev.eq.}
	\partial_t \dev &= \partial_x (\dev \,\partial_x \hfn ),
\end{align} 
with the initial data $ \lim_{t\downarrow 0} (\hfn(t,x) t) = -\frac12 x^2 $ and the terminal data $ \dev(2,x) = -c(1)\delta_0(x) $.

The equations \eqref{e.kpz.ld.}--\eqref{e.dev.eq.} can be solved by the procedure in \cite{kolokolov09,meerson16,kamenev16}.
For the completeness of presentation we briefly recall the procedure below.
It begins by solving \eqref{e.kpz.ld.}--\eqref{e.dev.eq.} by power series expansion in $ x $. 
In view of the initial data of $ \hfn $ and the terminal data of $ \dev $,
it is natural to assume $ \hfn(t,x)=\hfn(t,-x) $ and $ \dev(t,x)=\dev(t,-x) $.
Under such assumptions, the series terminates at the quadratic power for both $ \hfn $ and $ \dev $
and produces the solution $ \hfn(t,x) = k(t) + \frac12 a(t) x^2 $ and $ \dev(t,x) = -\frac{1}{2\pi} r(t)+\tfrac{1}{2\pi} (r(t)/\ell^2(t))x^2 $.
The factor $ \frac{1}{2\pi} $ is just a convention we choose;
the functions $ a(t),k(t),r(t), $ and $ \ell(t) $ can be found by inserting the series solution in \eqref{e.kpz.ld.}--\eqref{e.dev.eq.}.
The only relevant property to our current discussion is that $ r(t)>0 $.

The series solution, however, is nonphysical.
Indeed, with $ r(t)>0 $, we have $ \normL{\dev} = \infty $.
This issue is rectified by observing that the minimizing $ \dev $ of the right hand side of \eqref{e.wnt.variational} should be nonpositive.
This is so because $ \hfn(\dev;t,x) $ increases in $ \dev $.
Hence the positive part $ \dev_+ $ of $ \dev $ would only make $ \hfn(\dev;2,0) = -1 $ harder to achieve while costing excess $ \Lsp^2 $ norm.
This observation prompts us to truncate
\begin{align*}
	\devm(t,x) := - \tfrac{1}{2\pi} r(t)\big( 1 - \tfrac{x^2}{\ell(t)^2} \big)_+.
\end{align*}
It can be verified that such a $ \devm $ and a suitably truncated $ \hfn $ solve \eqref{e.kpz.ld.}--\eqref{e.dev.eq.}.


\begin{rmk}
It may appear that the preceding scaling applies also to the upper-tail regime $ \scl\to\infty $, but that is \emph{not} the case.
In the upper-tail regime,
the analyses of the physics works \cite{kolokolov09,meerson16,kamenev16} show that,
in the \emph{pre-scaled} coordinates,
the optimal $ \dev(t,x) $ concentrates in a small corridor of size $ O(\scl^{-1/2}) $ around $ x=0 $.
This behavior is in sharp contrast with that of the lower-tail,
where the optimal $ \dev(t,x) $ spans across a region in $ x $ of width $ O(\scl^{1/2}) $ in the pre-scaled coordinate.
The distinction of behaviors in the upper- and lower-tail regimes is ubiquitous
in the KPZ universality class.
As a result, the preceding scaling does not apply to the upper-tail regime. %
\end{rmk}

\subsubsection{Challenge in making the PDE argument rigorous}
\label{s.intro.phys.challenge}
To make this PDE analysis rigorous requires elaborate treatments and seems challenging.
This is so because \eqref{e.kpz.ld.}--\eqref{e.dev.eq.} are fully nonlinear equations. Taking derivative $u = \partial_x \hfn $ in \eqref{e.kpz.ld.}--\eqref{e.dev.eq.} gives
\begin{align*}
	\partial_t u &= \tfrac{1}{2} \partial_x (u^2) + \partial_x \rho,
	\\
	\partial_t \rho &= \partial_x (\rho u).
\end{align*}
These equations do \emph{not} have unique weak solutions, just like the inviscid Burgers equation \cite[Chapter 3.4]{evans1998partial}.%
One needs to impose certain entropy conditions to ensure the uniqueness of weak solutions,
and argue that in the limit $ \scl\to\infty $ the solution of \eqref{e.kpz.ld.}--\eqref{e.dev.eq.} 
converges to the entropy solution.

\subsection{Our method}
\label{s.intro.ourmethod}
Our method, which \emph{differs} from the physics heuristic described in Section~\ref{s.intro.phys}, operates at the level of the \ac{SHE} instead of the \ac{KPZ} equation.
Recall that we defined the solution of the \ac{KPZ} equation thorough the Hopf--Cole transformation,
so the solution $ h_\e $ to \eqref{e.kpz.scale} is given by $ h_\e := \log \ZZ_\e + \log(\e^{1/2}) $, where $ \ZZ_\e $ solves %
\begin{align}
	\label{e.she.scaled}
	\partial_t \ZZ_\e = \tfrac12 \partial_{xx} \ZZ_\e + \sqrt{\e} \xi \ZZ_\e,
\end{align}
with the delta initial condition $ \ZZ_\e(0,\Cdot) = \delta_0(x) $.
We seek to establish the the Freidlin--Wentzell \ac{LDP} for \eqref{e.she.scaled}.
Roughly speaking, the \ac{LDP} states that $ \P[ \ZZ_\e \approx \Zfn ] \approx \exp(-\e^{-1} \frac12 \normL{\dev}^2) $, where $ \Zfn=\Zfn(\dev;t,x) $ solves the PDE 
\begin{align}
	\label{e.he.intro}
	\partial_t \Zfn = \tfrac12 \partial_{xx} \Zfn + \dev \Zfn.
\end{align}
The precise statement of the Freidlin--Wentzell \ac{LDP} as well as the well posedness of \eqref{e.he.intro} will be given in Section~\ref{s.intro.FW}.
Use the contraction principle to specialize the Freidlin--Wentzell \ac{LDP} to one point. We have
\begin{align}
	\label{e.intro.rate+}
	\rateone (\lambda)  &= \inf\big\{\tfrac{1}{2} \normL{\rho}^2: \log \Zfn (\dev; 2, 0) \geq \lambda \big\},
\\
	\label{e.intro.rate-}
	\rateone (-\lambda) &= \inf\big\{\tfrac{1}{2} \normL{\rho}^2: \log \Zfn (\dev; 2, 0) \leq -\lambda \big\}.
\end{align} 
To analyze the variational problems \eqref{e.intro.rate+}--\eqref{e.intro.rate-}, we express $ \Zfn $ by the Feynman--Kac formula as
\begin{align}
	\label{e.intro.feynmankac}
	\Zfn(\dev;t,x)= 
	\E_{0\to x} \Big[ \exp\Big( \int_0^t \dev(s,\bb(s)) \, \d s \Big)  \Big] \hk(t,x),
\end{align}
where the $ \E_{0\to x} $ is taken with respect to a Brownian bridge $ \bb(s) $ that starts from $ \bb(0)=0 $ and ends in $ \bb(t)=x $, and $ \hk(t,x) := \exp(-x^2/2t)/\sqrt{2\pi t} $ denotes the standard heat kernel.

Given the Feynman--Kac formula, standard perturbation argument can be applied to obtained the quadratic law in the near-center regime, $ \scl\to 0 $; this is done in Section~\ref{s.nearc}.

Here we focus on the deep lower tail regime, i.e., analyzing \eqref{e.intro.rate-} in the limit $ -\scl\to-\infty $.
The scaling $\dev(\Cdot,\Cdot) \mapsto \lambda\dev(\Cdot, \lambda^{-\frac{1}{2}} \Cdot)$  mentioned in Section~\ref{s.intro.phys} 
gives
\begin{align}\label{e.rate-.scaling}
	\rateone (-\lambda) 
	= 
	\scl^{5/2} 
	\inf\big\{\tfrac{1}{2} \normL{\dev}^2: \hfn_\scl(\dev;2,0)  \leq -1 \big\},
\end{align}
where
\begin{align}
	\label{e.intro.feynmankac.scl}
	\hfn_\scl(\dev;t,x) 
	&:= 
	(\text{lower order term}) 
	-\tfrac{x^2}{2t}
	+ 
	\scl^{-1} \log  \E_{0\to\scl^{1/2} x} \Big[ \exp\Big( \int_0^t \scl\dev(s,\scl^{-\frac12} \bb(s)) \, \d s \Big) \Big].
\end{align}
The details of this scaling are given in Section~\ref{s.feynmankac}, and the precise expression of \eqref{e.intro.feynmankac.scl} is given in \eqref{e.feynmankac.scl}.

We seek to analyze the right hand side of \eqref{e.intro.feynmankac.scl} for $ (t,x)=(2,0) $.
For a suitable class of $ \dev $, Varadhan's lemma gives, as $ -\scl\to-\infty $,
\begin{align}
	\label{e.intro.zeroT}
	\scl^{-1} \log  \E_{0\to 0} \Big[ \exp\Big( \int_0^2 \scl\dev(s,\scl^{-\frac12} \bb(s)) \, \d s \Big) \Big]
	\longrightarrow
	- \inf_{\Path} \Big\{  \int_0^2 \tfrac{1}{2} \Path'(s)^2 - \dev(s,\Path(s)) \ \d s \Big\},
\end{align}
where the infimum is taken over all $ \Hsp^1 $ path $ \Path(s) $ that starts and ends in $ 0 $, i.e., $ \Path(0)=\Path(2)=0 $. 
This limit transition is reminiscent of the convergence (under the zero-temperature limit) of the free energy of a directed polymer to that of a last passage percolation.
Our task is hence to find the $ \dev=\dev(s,y) $ with the minimal $ L^2 $ norm such that the right hand side of \eqref{e.intro.zeroT} is $ \leq -1 $.

It is natural to \emph{guess} that the minimizing $ \dev $ should be the $ \devm $ obtained in the aforementioned PDE heuristic.
Taking this explicit $ \devm $,
we prove the convergence \eqref{e.intro.zeroT} (by Varadhan's lemma) and solve the path variational problem on the right side of \eqref{e.intro.zeroT}; see Lemma~\ref{l.devm.varadhan} and Proposition~\ref{p.geodesic}.
The explicit constant $ \frac{4}{15\pi} $ in Theorem~\ref{t.main}~\ref{t.main.lower} comes from the $ L^2 $ norm of $ \devm $.

The last step is to \emph{verify} that such a $ \devm $ is indeed the minimizer. This is done in Section~\ref{s.showing>}. There we appeal to an identity \eqref{e.after.jensen.} that involves $ \devm $.
This identity follows from the fact that for $ \dev=\devm $, the right hand side of \eqref{e.intro.zeroT} is equal to $ -1 $.
Using this identity, we show that, for any $ \dev $ that satisfies the required condition $ \hfn_\scl(\dev;2,0)  \leq -1 $, the quantity $ \ip{\devm-\dev,\devm} $ is approximately $ \leq 0 $; see \eqref{e.quantitative}. This bound then verifies that $ \devm $ is the minimizer.

\subsubsection{Freidlin--Wentzell LDP for the \ac{SHE}}
\label{s.intro.FW}

Here we state our result on the Freidlin--Wentzell \ac{LDP} for the \ac{SHE} \eqref{e.she.scaled}.
For the purpose of proving Theorem~\ref{t.main}, it suffices to just consider the narrow wedge initial data,
but we also consider function-valued initial data for their independent interest.

Let us set up the notation, first for function-valued initial data.
For $ a\in\R $, define the weighted sup norm $ \normSup{g}{a} := \sup_{x \in \R} \{ e^{-a|x|} |g(x)| \} $.
Let $ \Csp_a(\R) := \{ g\in \Csp(\R) : \normSup{g}{a} <\infty\} $,
and endow this space with the norm $ \normSup{\Cdot}{a} $.
Slightly abusing notation, for functions that depend also on time,
we use the same notation 
\begin{align}
	\label{e.normSup}
	\normSup{f}{a} :=  \big \{ e^{-a|x|} |f(t,x)|  \, : \, (t,x)\in\rt \big\} 
\end{align}
to denote the analogous norm,
and let $ \Csp_a(\rt) := \{ f\in \Csp(\rt) : \normSup{f}{a} <\infty\} $, endowed with the norm $ \normSup{\Cdot}{a} $.
Adopt the notation 
$
	 \Csp_{a_*^+}(\R) := \cap_{a>a_*}\Csp_a(\R)
$
and
$
	 \Csp_{a_*^+}(\rt) := \cap_{a>a_*} \Csp_a(\rt).
$
Let $ \hk(t,x) := \exp(-\frac{x^2}{2t})/\sqrt{2\pi t} $ denote the standard heat kernel.
Recall that the mild solution of \eqref{e.she.scaled} with a deterministic initial data $ \ini $ 
is a process $ \ZZe $ that satisfies
\begin{align}
	\label{e.mild}
	\ZZe(t,x) 
	= 
	\int_\R \hkk{t}{x}{y} \ini(y) \, \d y
	+
	\e^{\frac12} \int_\R \hkk{t-s}{x}{y} \ZZe(s,y) \xi(s,y) \, \d s \d y.
\end{align}
It is standard, e.g., \cite[Sections~2.1--2.6]{quastel2011introduction}, to show that for any $ \ini\in \Csp_{a_*^+}(\R) $,
there exists a unique mild solution $ \ZZ_\e $ of \eqref{e.she.scaled} given by the chaos expansion; 
see Section~\ref{s.chaos.fnvalued} for a discussion about chaos expansion.
Further, as shown later in Corollary~\ref{c.normSup.bd}, the chaos expansion (and hence $ \ZZ_\e $) is $ \Csp_{a_*^+}(\rt) $-valued.
Next we turn to the rate function.
Fix $ \ini \in \Csp_{a_*+}(\R) $.
For $ \dev\in\Lsp^2(\rt) $, consider the PDE
\begin{align*}
	\partial_t \Zfn = \tfrac12 \partial_{xx} \Zfn + \dev \Zfn,
	\qquad
	\Zfn(\dev;0,\Cdot) = \ini(\Cdot),
\end{align*}
where $ \Zfn=\Zfn(\dev;t,x) $, $ t\in [0,T] $, and $ x\in\R $.
This PDE is interpreted in the Duhamel sense as
\begin{align}
	\label{e.he}
	\Zfn(\dev;t,x) = \int_\R \hkk{t}{x}{y} \ini(y) \, \d y + \int_0^t \int_\R \dev(s,y) \Zfn(\dev;s,y) \, \d y \d s.
\end{align}
We will show in Section~\ref{s.Zfn.fnvalued} that \eqref{e.he} admit a unique $ \Csp_{a_*^+}(\rt) $-valued solution.
We will often write $ \Zfn(\dev) = \Zfn(\dev;\Cdot,\Cdot) $
and accordingly view $ \dev\mapsto \Zfn(\dev) $ as a function $ \Lsp^2(\rt)\to\Csp_{a}(\rt) $, for $ a>a_* $.
Here $ \dev $ should be viewed as a deviation of the spacetime white noise $ \sqrt{\e}\xi $.
For each such deviation $ \dev $ we run the PDE \eqref{e.he} to obtain the corresponding deviation $ \Zfn(\dev)=\Zfn(\dev;t,x) $ of $ \ZZ_\e $.
Now, since the spacetime white noise $ \xi $ is Gaussian with the correlation $ \E[\xi(t,x)\xi(s,y)] = \delta_0(t-s)\delta_0(x-y) $,
one expects the rate function to be the $ \Lsp^2 $ norm of $ \dev $, more precisely
\begin{equation}
	\label{e.rate}
	\rate(f) := \inf\big\{\tfrac{1}{2}\normL{\dev} \,:\, \dev \in \Lsp^2(\rt),  \Zfn(\dev) = f \big\},
\end{equation}
with the convention $ \inf\emptyset := +\infty $.

As for the narrow wedge initial data,
we adopt the same notation as in the preceding but replace $ \ini\in\Csp_{a_*^+}(\R) $ with $ \ini=\delta_0 $.
More explicitly, the mild solution of the \ac{SHE} \eqref{e.she.scaled} satisfies
\begin{align}
	\tag{\ref*{e.mild}-nw}
	\label{e.mild.nw}
	\ZZe(t,x) 
	= 
	\hk(t,x)
	+
	\e^{\frac12} \int_\R \hkk{t-s}{x}{y} \ZZe(s,y) \xi(s,y) \, \d s \d y,
\end{align}
and the function $ \Zfn(\dev) $ now solves
\begin{align}
	\tag{\ref*{e.he}-nw}
	\label{e.he.nw}
	\Zfn(\dev;t,x) = \hk(t,x) + \int_0^t \int_\R \dev(s,y) \Zfn(\dev;s,y) \, \d y \d s.
\end{align}
Recall that $ \ZZe $ starts from the delta initial condition $ \ZZ_\e(0,\Cdot)=\delta_0(x) $.
The smoothing effect of the Laplacian in the \ac{SHE} makes $ \ZZ_\e(t,\Cdot) $ function-valued for all $ t>0 $,
but when $ t \to 0 $ the process $ \ZZe(t,\Cdot) $ becomes singular as it approaches $ \delta_0 $.
To avoid the singularity, we work with the space $ \Csp_a(\rtz) $, $ \cuttime>0 $, $ a\in\R $, equipped with the norm
\begin{align}
	\label{e.normSup.cut}
	\normSup{f}{a,\cuttime} :=  \big \{ e^{-a|x|} |f(t,x)|  \, : \, (t,x)\in\rtz \big\}. 
\end{align}
It is standard to show that \eqref{e.mild.nw} admits a unique solution 
that is $ \Csp_a(\rtz) $-valued for all $ \cuttime>0 $ and $ a\in\R $.
The same holds for \eqref{e.he.nw}.


Let $ \Omega $ be a topological space.
Recall that a function $ \varphi:\Omega \to \R\cup\{+\infty\} $ 
is a \textbf{good rate function} if $ \varphi $ is lower semi-continuous and the set $ \{ f: \varphi(f) \leq r \} $ is compact for all $ r<+\infty $.
Recall that a sequence $ \{W_\e\} $ of $ \Omega $-valued random variables 
\textbf{satisfies an \ac{LDP} with speed $ \e^{-1} $ and the rate function $ \varphi $} if
for any closed $ F \subset \Omega $ and open $ G\subset\Omega $,   
\begin{align*}
	\liminf_{\e \to 0} \e \log \P\big[W_\e \in G\big] \geq -\inf_{f \in G} \varphi(f),
	\qquad
	\limsup_{\e \to 0} \e \log \P\big[W_\e \in F\big] &\leq -\inf_{f \in F} \varphi(f).
\end{align*}
In this paper we prove the following Freidlin--Wentzell \ac{LDP} for the \ac{SHE}.

\begin{prop}\label{t.FW}
\begin{enumerate}[leftmargin=20pt, label=(\alph*)]
\item[]
\item \label{t.FW.fnvalued}
Fix $ a_*\in\R $, $ \ini\in\Csp_{a_*^+}(\R) $, and $ T<\infty $. Let $ \ZZe $ be the solution of \eqref{e.mild}
and let $ \Zfn(\dev) $ be the solution of \eqref{e.he}.
\\
For any $ a>a_* $, 
the function $ \rate: \Csp_a(\rt)\to\R\cup\{+\infty\} $ in \eqref{e.rate} is a good rate function.
Further, $ \{\ZZ_\e\}_\e $ satisfies an \ac{LDP} in $ \Csp_a(\rt) $ with speed $ \e^{-1} $ and the rate function $ \rate $.
\item \label{t.FW.nw}
Fix $ T<\infty $. 
Let $ \ZZe $ be the solution of \eqref{e.mild.nw} and let and let $ \Zfn(\dev) $ be the solution of \eqref{e.he.nw}.\\
For any $ a\in\R $ and $ \cuttime\in(0,T) $, 
the function $ \rate: \Csp_a(\rtz)\to\R\cup\{+\infty\} $ in \eqref{e.rate} is a good rate function.
Further, $ \{\ZZ_\e\}_\e $ satisfies an \ac{LDP} in $ \Csp_a(\rtz) $ with speed $ \e^{-1} $ and the rate function $ \rate $.
\end{enumerate}
\end{prop}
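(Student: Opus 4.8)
The plan is to prove Proposition~\ref{t.FW} by the weak-convergence (variational) method of Budhiraja--Dupuis, which is well suited to multiplicative-noise SPDEs and circumvents the fact that the It\^o solution map of \eqref{e.mild} is only measurable, not continuous, on the support of the white noise---so that a naive contraction principle from Schilder's theorem for $\sqrt\e\,\xi$ is unavailable. Realize $\xi$ as a cylindrical Wiener process and write $\ZZe=\mathcal G^\e(\sqrt\e\,\xi)$ for the measurable solution map of \eqref{e.mild}, which is well posed (standard) and $\Csp_{a_*^+}(\rt)$-valued by Corollary~\ref{c.normSup.bd}. For a control $\dev$, let $\ZZe^{\dev}$ solve the \emph{controlled} \ac{SHE}, obtained by replacing $\sqrt\e\,\xi$ with $\sqrt\e\,\xi+\dev$ in \eqref{e.mild}; its well posedness follows from Girsanov together with that of \eqref{e.mild}, or directly from the same fixed-point scheme with the extra drift $\dev\,\ZZe^{\dev}$ controlled through the heat kernel. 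By the Bou\'e--Dupuis/Budhiraja--Dupuis variational formula for $-\e\log\E[e^{-F(\ZZe)/\e}]$, $F$ bounded and continuous, it then suffices---to obtain the Laplace principle, equivalently the asserted \ac{LDP} with good rate function $\rate$---to verify the two standard conditions: \textbf{(i)} for every $M<\infty$ the set $\{\Zfn(\dev):\tfrac12\normL{\dev}^2\le M\}$ is compact in $\Csp_a(\rt)$; and \textbf{(ii)} for any adapted, $\Lsp^2(\rt)$-bounded controls $\dev_\e$ (in the sense $\sup_\e\tfrac12\normL{\dev_\e}^2\le M$) converging in distribution to $\dev$ in the weak $\Lsp^2$ topology, one has $\ZZe^{\dev_\e}\to\Zfn(\dev)$ in $\Csp_a(\rt)$ in probability. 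Condition (i) also gives that $\rate$ is good: the sublevel set $\{\rate\le M\}$ is the continuous image of a weakly compact $\Lsp^2$-ball under the (to-be-established) skeleton map $\dev\mapsto\Zfn(\dev)$, hence compact.

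The heart of (i) is that continuity: $\dev\mapsto\Zfn(\dev)$ maps $\Lsp^2(\rt)$, with the weak topology on $\{\tfrac12\normL{\dev}^2\le M\}$, continuously into $\Csp_a(\rt)$; compactness of the sublevel set then follows from weak compactness of $\Lsp^2$-balls. From the mild formulation \eqref{e.he} (well posed and $\Csp_{a_*^+}(\rt)$-valued by the results of Section~\ref{s.Zfn.fnvalued}), whose Duhamel term is a space-time convolution of the heat kernel $\hk$ against $\dev\,\Zfn(\dev)$, together with the Gaussian bounds on $\hk$---in particular the semigroup estimate $\normSup{\hk(t,\Cdot)*g}{a}\lesssim e^{ca^2t}\normSup{g}{a}$---a Gr\"onwall argument produces an a priori bound on $\normSup{\Zfn(\dev)}{a}$ uniform over $\tfrac12\normL{\dev}^2\le M$ and $a>a_*$, and parallel estimates give joint H\"older equicontinuity of $\{\Zfn(\dev)\}$ on compacts of $\rt$. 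Given $\dev_n\rightharpoonup\dev$, Arzel\`a--Ascoli (weighted by $e^{-a|x|}$, with a tail bound for $|x|\to\infty$) yields a subsequence with $\Zfn(\dev_n)\to g$ in $\Csp_a(\rt)$, and $g=\Zfn(\dev)$ is identified by passing to the limit in \eqref{e.he}. The only delicate term is the bilinear one: writing $\dev_n\Zfn(\dev_n)=\dev_n(\Zfn(\dev_n)-g)+\dev_n g$, the first summand vanishes by Cauchy--Schwarz (since $\dev_n$ is $\Lsp^2$-bounded, $\Zfn(\dev_n)\to g$ locally uniformly, and the weight is absorbed by the heat kernel), while the second converges because $\dev_n\rightharpoonup\dev$ is tested against the fixed $\Lsp^2$ function $(s,y)\mapsto\hk(t-s,x-y)g(s,y)$.

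For (ii), decompose $\ZZe^{\dev_\e}=\Zfn(\dev_\e)+R_\e$. The first part converges, $\Zfn(\dev_\e)\to\Zfn(\dev)$, by the continuity established in (i), while $R_\e$ carries the stochastic convolution $\sqrt\e\int_0^t\!\int_\R\hk(t-s,x-y)\ZZe^{\dev_\e}(s,y)\,\xi(s,y)\,\d s\,\d y$ and the resulting differences; a Burkholder--Davis--Gundy and moment estimate, the uniform a priori bounds on $\ZZe^{\dev_\e}$, and a closing Gr\"onwall step give $\normSup{R_\e}{a}\to0$ in $\Lsp^p$ for $p$ large, hence in probability. Part~\ref{t.FW.nw} runs the same way with $\Csp_a(\rt)$ replaced by $\Csp_a(\rtz)$; the one new ingredient is the $t\downarrow0$ singularity of $\hk(t,\Cdot)$ (and of $\Zfn(\dev)(t,\Cdot)$), handled via the Feynman--Kac representation \eqref{e.intro.feynmankac}: since $\int_0^t\dev(s,\bb(s))\,\d s$ has exponential moments under the Brownian bridge---quantitatively in terms of $\normL{\dev}$ by a short computation with the bridge density---one obtains bounds on $\normSup{\Zfn(\dev)}{a,\cuttime}$ and the companion equicontinuity, uniform over $\tfrac12\normL{\dev}^2\le M$, and the controlled equation and its convergence are treated exactly as before. (Alternatively part~\ref{t.FW.nw} could be derived from part~\ref{t.FW.fnvalued} by viewing, for $\cuttime'\in(0,\cuttime)$, $\ZZe$ on $\rtz$ as the function-valued solution started at time $\cuttime'$; but the random data $\ZZe(\cuttime',\Cdot)$ makes the direct route cleaner.)

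I expect the main obstacle to be the weak-to-strong continuity of the skeleton map in the \emph{weighted} sup norm over the \emph{unbounded} spatial domain: passing to the limit in the bilinear term $\int\!\int\hk\,\dev_n\,\Zfn(\dev_n)$ requires juggling weak $\Lsp^2$ convergence of $\dev_n$, only local-uniform convergence of $\Zfn(\dev_n)$, and an exponential weight that must be tamed by tail estimates---this is precisely where the heat-kernel smoothing has to be used quantitatively. A secondary, more technical point is the uniform control near the $t=0$ singularity in part~\ref{t.FW.nw}, which is why the Feynman--Kac representation is invoked; and, as a bookkeeping matter, since $(\Csp_a(\rt),\normSup{\Cdot}{a})$ is not separable one works throughout in the closed separable subspace on which the solutions live---those $f$ with $e^{-a|x|}|f(t,x)|\to0$ as $|x|\to\infty$---on which the weak-convergence method applies verbatim.
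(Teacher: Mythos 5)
Your proposal is correct in spirit and takes a genuinely different route from the paper. You set up the Budhiraja--Dupuis weak-convergence (variational representation) method: realize $\xi$ as a cylindrical Wiener process, introduce the controlled equation $\ZZe^\dev$, and reduce the Laplace principle to (i) weak-to-strong $\Lsp^2\to\Csp_a$ continuity and compactness of the skeleton map $\dev\mapsto\Zfn(\dev)$ on $\Lsp^2$-balls, and (ii) convergence in probability of $\ZZe^{\dev_\e}$ to $\Zfn(\dev)$ for adapted controls with bounded energy, closed by Gr\"onwall and BDG. The paper instead proceeds through the Wiener-chaos expansion $\ZZe=\sum_n \e^{n/2}\chaos_n$: it imports an LDP for each finite truncation $Z_{N,\e}=\sum_{n\le N}\e^{n/2}\chaos_n$ from \cite[Theorem~3.5]{hairer15} (Proposition~\ref{p.HW}), proves sharp tail bounds $\P[\normSup{\chaos_n}{a}\ge r]\le C\exp(-\tfrac1C n^{3/2}r^{2/n})$ via hypercontractivity and a Kolmogorov-type chaining argument (Lemma~\ref{l.hyperc}, Propositions~\ref{p.chaos.2ndmom}, \ref{p.snormH.bd}, \ref{p.normSup.bd}), deduces that $Z_{N,\e}$ is an exponentially good approximation of $\ZZe$ (Proposition~\ref{p.expapprox}), verifies that $\rate$ is a good rate function by approximating each $(\chaos_n)_{\mathrm{hom}}$ by continuous conditional expectations $(\Psi_{n,k})_{\mathrm{hom}}$ (Lemma~\ref{l.goodrate}), shows the truncated rate functions $\rate_N$ converge to $\rate$ in the Dembo--Zeitouni sense (Lemma~\ref{l.DZ}), and concludes via \cite[Theorem~4.2.16~(b)]{DZ94}. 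Both methods are established; your route avoids the delicate chaos-norm tail estimates altogether and is perhaps more robust to changes in the noise or the nonlinearity, whereas the paper's route produces as a byproduct the quantitative chaos bounds used elsewhere (e.g.\ Corollary~\ref{c.normSup.bd}) and sidesteps the need to identify and manipulate a separable state space, since it works directly with the Banach-valued chaos framework of \cite{hairer15}. You have correctly singled out the two genuine technical pain points of your approach: passing to the limit in the bilinear term $\int\hk\,\dev_n\,\Zfn(\dev_n)$ with only weak convergence of $\dev_n$ and a noncompact weighted domain, and uniform control near the $t\downarrow0$ singularity in part~\ref{t.FW.nw}; both are solvable (the latter exactly via the Feynman--Kac representation as you suggest, or equivalently via the scaling $\hk(s,y)^2\lesssim s^{-1/2}\hk(s/2,y)$), but would require as much care as the chaining estimates in the paper. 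The separability caveat you flag is real and applies to the paper as well, which asserts separability of $\Csp_a(\rt)$ without comment; in both approaches one should restrict to the closed separable subspace of functions with $e^{-a|x|}|f(t,x)|\to0$ as $|x|\to\infty$, which contains all the objects of interest since $a>a_*$ (respectively since the narrow-wedge chaos decays in $x$).
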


\subsection{Literature on the WNT and Freidlin-Wentzell LDPs for stochastic PDEs}
The \ac{WNT}, also known as the optimal fluctuation theory,
dates back at least to the works \cite{halperin66, zittartz66, lifshitz68} in condensed matter physics. In the context of stochastic PDEs, the \ac{WNT} studies large deviations of the solution's trajectory when the noise is scaled to be weaker and weaker. Such scaling is often equivalent to the short time scaling of a fixed SPDE. (See \eqref{e.scaling}--\eqref{e.kpz.scale} for the case of the \ac{KPZ} equation.)
In the physics literature, the \ac{WNT} was carried out in \cite{fogedby98} for the noisy Burgers equation, 
in \cite{kolokolov07, kolokolov09} for directed polymer and in \cite{kamenev16, meerson16} for the KPZ equation. 
The \ac{WNT} is also known as the instanton method in turbulence theory \cite{falkovich96, falkovich01, grafke15}, the macroscopic fluctuation theory in lattice gases \cite{bertini15}, 
and WKB methods in reaction-diffusion systems \cite{elgart04, meerson11}.

The Freidlin-Wentzell \ac{LDP} has been established for various stochastic PDEs, 
including reaction-diffusion-like stochastic equations \cite{chenal97, budhiraja08},
the stochastic Allen--Cahn equation \cite{hairer15},
and the stochastic Navier--Stokes equation \cite{cerrai19}.

\subsection{Some discussions about the rate function $ \Phi $}
\label{s.Phi}
The physics work \cite{ledoussal16short} used a different method to derive
\begin{align*}
	\Phi(\scl) 
	=
	\begin{cases}
	 \frac{-1}{\sqrt{4\pi}} \min\limits_{z \in [-1, +\infty)} \big\{ z e^{\scl} + \text{Li}_{\frac{5}{2}}(-z)\big\}, &\scl \leq \scl_c,\\
	\frac{-1}{\sqrt{4\pi}}\min\limits_{z \in [-1, 0)} \big\{z e^{\scl} + \text{Li}_{\frac{5}{2}} (-z) - \frac{8\sqrt{\pi}}{3} (-\log (-z))\big\}, & \scl \geq \scl_c,
	\end{cases}
\end{align*}
where $\text{Li}_\nu (z)$ is the poly-logarithm function and $ \scl_c = \log \zeta(\frac32) $.
Though not completely mathematically rigorous,
the derivation is based on convincing arguments 
and is backed by the numerical result \cite{hartmann2018high}. 
Based on this expression, the work obtained many properties of $ \Phi $,
including its analyticity on $ \scl\in\R $, and lower-order terms in the deep lower-tail regime $ -\scl\to-\infty $ (beyond the leading term $ \frac{4}{15\pi}\scl^{\frac52} $).
Our results do not cover these detailed properties of $ \Phi $.
Rigorously proving these properties is an interesting open question.

\subsection*{Acknowledgments}
We thank Ivan Corwin for suggesting this problem to us and for useful discussions,
thank Sayan Das, Amir Dembo, Promit Ghosal, Konstantin Matetski, and Shalin Parekh for useful discussions,
and thank Martin Hairer, Hao Shen, and Hendrik Weber for clarifying some points about the literature. We thank the anonymous referees for useful comments that improve the presentation of this paper. The research of YL is partially supported by the Fernholz Foundation's ``Summer Minerva Fellow" program and also received summer support from Ivan Corwin's
NSF grant DMS-1811143. The research of LCT is partially supported by the NSF through DMS-1953407.

\subsection*{Outline of the rest of the paper}
In Section~\ref{s.chaosZfn}, we recall the formalism of Wiener chaos,
recall a result from \cite{hairer15} that gives the \ac{LDP} for finitely many chaos,
and prepare some properties of the function $ \Zfn(\dev) $.
In Section~\ref{s.FW}, we establish tail probability bounds on the Wiener chaos for the \ac{SHE}.
Based on such tail bounds, 
we leverage the \ac{LDP} for finitely many chaos into the \ac{LDP} for the \ac{SHE}, thereby proving Proposition~\ref{t.FW}.
In Section~\ref{s.onepint}, we analyze the variational problem given by the one-point \ac{LDP} for the \ac{SHE}
and prove Theorem~\ref{t.main}.

\section{Wiener spaces, Wiener chaos, and the function $ \Zfn(\dev) $}
\label{s.chaosZfn}

In this section we recall the formalism of Wiener spaces and chaos,
and prepare some properties of $ \Zfn(\dev) $.

\subsection{Function-valued initial data}
\label{s.chaosZfn.fnvalued}
Throughout this subsection we fix $ T<\infty $, $ a_*\in\R $, and $ \ini\in\Csp_{a_*^+}(\R) $,
and initiate the \ac{SHE} \eqref{e.she.scaled} from $ \ZZe(0,\Cdot) = \ini(\Cdot) $.

\subsubsection{Wiener spaces and chaos}
\label{s.chaos.fnvalued}
We will mostly follow \cite[Section 3]{hairer15}.
The basic elements of the Wiener space formalism consists of $ (\bana,\hilb,\mu) $,
where $ \bana $ is a Banach space over $ \R $ equipped with a Gaussian measure $ \mu $,
and $ \hilb\subset\bana $ is the Cameron--Martin space of $ \bana $.
In our setting $ \hilb = \Lsp^2(\rt) $, 
and $ \bana $ can be any a Banach space such that the embedding $ \hilb\subset\bana $ is dense and Hilbert--Schmidt.
To be concrete, fixing an arbitrary orthonormal basis $ \{e_1,e_2,\ldots\} $ of $ \hilb=\Lsp^2(\rt) $, we let 
\begin{align}
	\label{e.bana}
	\bana := \Big\{ \xi=\sum \xi_i e_i \, : \, \xi_1,\xi_2,\ldots\in\R, \ \Vert \xi \Vert_{\bana} <\infty \Big\},
	\qquad
	\big\Vert \sum \xi_i e_i \big\Vert_{\bana}^2 := \sum\nolimits_{i\geq 1} \tfrac{1}{i^2}|\xi_i|^2.
\end{align}
Identifying $ \bana $ as a subset of $ \R^{\Zp} $,
we set $ \mu := \otimes_{\Zp} \nu $, where $ \nu $ is the standard Gaussian measure on $ \R $. 
The space $ \bana $ serves as the \emph{sample space}.
For example, for $ f\in\Lsp^2(\rt) $ with $ f= \sum f_i e_i $,
the function
\begin{align}
	\label{e.pair}
	\pair(f): \bana \rightarrow \R,\qquad \pair(f) := \sum\nolimits_{i\geq 1} f_i \xi_i
\end{align}
should be identified with the random variable $ \int_0^T \int_\R f(t,x) \xi(t,x) \, \d t \d x $.
This identification justifies using $ \xi $ to denote both elements of $ \bana $ and the spacetime white noise.

The Hermite polynomials $ \hermite_n(x) $ are the unique polynomials satisfying $ \deg(\hermite_n) = n $ and
\begin{equation}
	\label{e.hermiteid}
	e^{\tau x -\frac{\tau^2}{2}} = \sum_{n=0}^\infty \tau^n \hermite_n (x).
\end{equation}
The \textbf{$ n $-th $ \R $-valued Wiener chaos} is the closure in $ \Lsp^2(\bana\to\R,\mu) $ of the linear subspace spanned by
$ \prod_{i=1}^\infty H_{\alpha_i}(\pair(e_i)) $, for $ (\alpha_1,\alpha_2,\ldots) \in \Zn\times\Zn\times\ldots $ and $ \alpha_1+\alpha_2+\ldots =n $.
Since our goal is to establish a \emph{functional} \ac{LDP},
it is natural to consider Wiener chaos at the functional level.
We will follow the formalism of Banach-valued Wiener chaos from \cite[Section~3]{hairer15}.
Fix $ a>a_* $ and consider $ \banaE = \Csp_a(\rt) $, which is a separable Banach space.
The \textbf{$ n $-th $ \banaE $-valued Wiener chaos} is the space
\begin{align*}
	\Big\{ 
		\Psi\in \Lsp^2(\bana\to\banaE,\mu) \, : \, \int \Psi(\xi) \psi(\xi) \mu(\d \xi) =0, \
		\forall \psi \in (m\text{-th } \R\text{-valued Wiener chaos), with }  m\neq n
	\Big\}.
\end{align*}
In probabilistic notation, the $ n $-th $ \banaE $-valued Wiener chaos
consists of $ \Csp_a(\rt) $-valued random variables $ \Psi $ such that
$ \E[ \normSup{\Psi}{a}^2 ]<\infty $ and that $ \E[ \Psi\psi ] = 0 $,
for all $ \psi $ in the $ m $-th $ \R $-valued Wiener chaos with $ m\neq n $.

We now turn to the \ac{SHE}. Set
\begin{equation}
	\label{e.chaos}
	\chaos_{n} (t,x) 
	:= 
	\int_{\Delta_n (t)}
	\int_{\R^{n+1}} \hkk{s_{n}-s_{n+1}}{y_n}{y_{n+1}} \ini(y_{n+1}) \d y_{n+1}
	\,
	\prod_{i=1}^n \hkk{s_{i-1}-s_{i}}{y_{i-1}}{y_{i}} \xi(s_i, y_i) \,  \d s_i \d y_i, 
\end{equation} 
where
$
	\Delta_n (t)  
	= 
	\{\vec{s} = (s_0,s_1, \dots, s_{n+1}) : 0=s_{n+1} < s_n < \dots < s_1 < s_0 = t\}
$, 
with the convention $ s_0:=t $ and $ y_0 := x $.
Iterating \eqref{e.mild} gives
\begin{align}
	\label{e.chaos.expans}
	\ZZ_\e(t,x) = \sum_{n=0}^\infty \e^{\frac{n}{2}} \chaos_{n}(t,x).
\end{align}
We will show later in Proposition~\ref{p.normSup.bd} that each $ \chaos_n $ defines a $ \Csp_a(\rt) $-valued random variable,
and show in Corollary~\ref{c.normSup.bd} that the right hand side of \eqref{e.chaos.expans} converges in $ \normSup{\Cdot}{a} $ almost surely.
It is standard to show that \eqref{e.chaos.expans} gives the unique mild solution of the \ac{SHE}.
Further, given the $ n $-fold stochastic integral expression in \eqref{e.chaos},
it is standard to show that, for fixed $ (t,x)\in\rt $, the random variable $ \chaos_{n}(t,x) $ lies in the $ n $-th $ \R $-valued Wiener chaos,
and $ \chaos_{n}\in\Csp_a(\rt) =:\banaE $ lies in the $ n $-th $ \banaE $-valued Wiener chaos.
Accordingly, we refer to the series \eqref{e.chaos.expans} as the \textbf{chaos expansion} for the \ac{SHE}.

Let 
$	\ZZ_{N,\e} := \sum_{n=0}^N \e^{\frac{n}2}\chaos_{n} $
denote the partial sum of the chaos expansion \eqref{e.chaos.expans}.
The \acp{LDP} of finitely many $ \banaE $-valued Wiener chaos has been established in \cite[Theorem~3.5]{hairer15}.
We next apply this result to obtain an \ac{LDP} for $ \ZZ_{N,\e} $.
Following the notation in \cite{hairer15},
we view $ \chaos_{n} $ as a function $ \bana\to\Csp_a(\rt) $, denoted $ \chaos_{n}(\xi) $,
and define
\begin{align}
	\label{e.chaoshom}
	(\chaos_n)_\text{hom}: \Lsp^2(\rt) \to \Csp_a(\rt),
	\qquad
	(\chaos_{n})_\text{hom}(\dev) := \int_{\bana} \chaos_{n}(\xi+\dev) \, \mu(\d \xi).
\end{align}
The last integral is well-defined for any $ \dev\in\Lsp^2(\rt) $ by the Cameron--Martin theorem.
Further define
\begin{equation}
	\label{e.rateN.}
	\rate_N: \Csp_a(\rt) \to \R\cup\{+\infty\}
	\quad
	\rate_N (f) := \inf\Big\{\tfrac{1}{2} \normL{\dev}^2 \, : \, \dev \in \Lsp^2(\rt), \ \sum_{n=0}^N (\chaos_n)_\text{hom}(\dev) = f \Big\},
\end{equation}
with the convention $ \inf\emptyset := +\infty $.
We now apply \cite[Theorem~3.5]{hairer15} to obtain an \ac{LDP} for $ Z_{N,\e} $.
\begin{prop}[Special case of {\cite[Theorem~3.5]{hairer15}}]
\label{p.HW}
For any fixed $ a>a_* $,
the function $ \rate_N $ in \eqref{e.rateN.} is a good rate function.
For fixed $ N<\infty $, 
$
	\{ Z_{N,\e} := \sum_{n=0}^N \e^{\frac{n}{2}} \chaos_n \}_\e
$
satisfies an \ac{LDP} on $\Csp_a(\rt)$ with speed $ \e^{-1} $
and the rate function $ \rate_N $.
\end{prop}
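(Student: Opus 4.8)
The plan is to recognize the statement as a direct instance of the large-deviation principle for finite sums of Banach-valued Wiener chaos from \cite[Theorem~3.5]{hairer15}, and to carry out only the bookkeeping needed to fit our objects into that framework. Recall from Section~\ref{s.chaos.fnvalued} that we have fixed the abstract Wiener space $(\bana,\hilb,\mu)$ with Cameron--Martin space $\hilb=\Lsp^2(\rt)$, and that $\banaE:=\Csp_a(\rt)$ is a separable Banach space. First I would observe that each iterated integral $\chaos_n$ from \eqref{e.chaos} is an $n$-fold multiple stochastic integral, hence $n$-homogeneous in the driving noise, so that replacing $\xi$ by $\sqrt{\e}\,\xi$ multiplies $\chaos_n$ by $\e^{n/2}$; consequently, writing $\Psi:=\sum_{n=0}^N\chaos_n$ as a map $\bana\to\banaE$, one has $\ZZ_{N,\e}=\Psi(\sqrt{\e}\,\xi)$, which is precisely the type of family addressed by \cite[Theorem~3.5]{hairer15} (noise scaled by $\sqrt\e$, speed $\e^{-1}$).

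The substantive step is to check the hypotheses of that theorem: that $\banaE$ is separable (recorded in Section~\ref{s.chaos.fnvalued}), and that for each $0\le n\le N$ the map $\chaos_n$ lies in the $n$-th $\banaE$-valued Wiener chaos. The latter has two ingredients, both noted in Section~\ref{s.chaos.fnvalued} as standard consequences of the structure of \eqref{e.chaos}: (i) $\chaos_n$ is a $\Csp_a(\rt)$-valued random variable with $\E[\normSup{\chaos_n}{a}^2]<\infty$, for which I would invoke Proposition~\ref{p.normSup.bd}; and (ii) $\chaos_n$ is orthogonal, in the Bochner sense, to every $\R$-valued Wiener chaos of order $m\neq n$, which holds because for fixed $(t,x)$ the variable $\chaos_n(t,x)$ is a multiple Wiener--It\^o integral of order $n$, multiple integrals of distinct orders being orthogonal, and this orthogonality upgrades to $\banaE$ upon testing against the (continuous, point-separating) evaluation functionals of $\Csp_a(\rt)$.

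Granting these, I would apply \cite[Theorem~3.5]{hairer15} to conclude that $\{\ZZ_{N,\e}\}_\e$ satisfies an \ac{LDP} on $\Csp_a(\rt)$ with speed $\e^{-1}$ and good rate function $f\mapsto\inf\{\tfrac12\normL{\dev}^2:\sum_{n=0}^N(\chaos_n)_\text{hom}(\dev)=f\}$, where $(\chaos_n)_\text{hom}(\dev)=\int_{\bana}\chaos_n(\xi+\dev)\,\mu(\d\xi)$ is the homogenization of \eqref{e.chaoshom}, the Cameron--Martin shift being legitimate for any $\dev\in\Lsp^2(\rt)$. This functional coincides with $\rate_N$ of \eqref{e.rateN.}, and its goodness is part of the conclusion of \cite[Theorem~3.5]{hairer15}; this finishes the proof.

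Since the proof is a verification of hypotheses, I do not anticipate a genuine obstacle here. The one non-elementary input is the weighted-sup-norm moment bound $\E[\normSup{\chaos_n}{a}^2]<\infty$, which requires estimating an $n$-fold stochastic integral uniformly over $(t,x)\in\rt$; this is deferred to Proposition~\ref{p.normSup.bd} in Section~\ref{s.FW}, and that estimate --- rather than anything about the \ac{LDP} machinery itself --- is where the real work lies.
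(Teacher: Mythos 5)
Your proposal is correct and follows essentially the same route as the paper: recognize the statement as an instance of \cite[Theorem~3.5]{hairer15}, verify separability of $\banaE=\Csp_a(\rt)$ and membership of each $\chaos_n$ in the $n$-th $\banaE$-valued Wiener chaos (deferring the weighted-sup-norm moment bound to Proposition~\ref{p.normSup.bd}, which is indeed where the real work lies). One small imprecision worth flagging: \cite[Theorem~3.5]{hairer15} gives the \ac{LDP} for the $\banaE^{N+1}$-valued \emph{tuple} $\boldsymbol{\Psi}^{(\e)}=(\chaos_0,\e^{1/2}\chaos_1,\ldots,\e^{N/2}\chaos_N)$ on $\Csp_a(\rt)^{N+1}$, not directly for $\ZZ_{N,\e}$; the paper then passes to $\ZZ_{N,\e}$ by the contraction principle applied to the continuous summation map $(f_0,\ldots,f_N)\mapsto f_0+\cdots+f_N$, and it is this contraction that produces the rate function $\rate_N$ of \eqref{e.rateN.}. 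You silently collapse these two steps into one; otherwise the argument is the same.
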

\begin{proof}
Applying \cite[Theorem~3.5]{hairer15} with $ \delta(\e) = 0 $
and with $ \boldsymbol{\Psi}^{(\e)} = ( \chaos_{0}, \e^{1/2} \chaos_{1},\ldots, \e^{N/2}\chaos_N ) \in \banaE^{N+1} $ gives
an \ac{LDP} on $ \Csp_a(\rt)^{N+1} $ for $ \boldsymbol{\Psi}^{(\e)} $ with speed $ \e^{-1} $ and the rate function
$ 
	J(f_0,\ldots,f_N)
	:= 
	\inf\{ \frac{1}{2} \normL{\dev}^2 : \dev \in \Lsp^2(\rt), \ (\chaos_n)_\text{hom}(\dev) = f_n, n=0,\ldots,N \}.
$
Since the map $ \Csp_a(\rt)^{N+1} \to \Csp_a(\rt) $, $ (f_0,\ldots,f_N) \mapsto f_0+\ldots+f_N $ is continuous,
the claimed result follows by the contraction principle.
\end{proof}

\subsubsection{Properties of the function $ \Zfn(\dev) $}
\label{s.Zfn.fnvalued}
Recall that $ \Zfn(\dev) $ denotes the solution of \eqref{e.he}.
We begin by developing an series expansion for $ \Zfn(\dev) $ that mimics the chaos expansion for the \ac{SHE}.
For fixed $ \dev\in\Lsp^2(\rt) $, let
\begin{align}
	\label{e.chaosfn}
	\chaosfn_{n}(\dev;t,x)
	:=
	\int_{\Delta_n(t)}\int_{\R^{n+1}}  \hkk{s_n-s_{n+1}}{y_n}{y_{n+1}} \ini(y_{n+1}) \d y_{n+1} \, \prod_{i=1}^n \hkk{s_{i-1}-s_{i}}{y_{i-1}}{y_i} \dev (s_i,y_i) \d s_i \d y_i.
\end{align}
where
$
	\Delta_n (t)  
	:= 
	\{\vec{s} = (s_0,s_1, \dots, s_{n+1}) : 0=s_{n+1} < s_n < \dots < s_1 < s_0 = t\}
$, 
with the convention $ s_0:=t $ and $ y_0 := x $.
Iterating \eqref{e.he} shows that the unique solution is given by
\begin{align}
	\label{e.Zfn.expansion}
	\Zfn(\dev;t,x) = \sum_{n=0}^\infty \chaosfn_n(\dev;t,x),
\end{align}
provided that the right hand side of \eqref{e.Zfn.expansion} converges in $ \normSup{\Cdot}{a} $.

To verify this convergence we proceed to establish a bound on $ \normSup{\chaosfn_{n}(\dev)}{a} $.
Hereafter, we will use $ C=C(a_1,a_2,\ldots) $ to denote a deterministic positive finite constant.
The constant may change from line to line or even within the same line,
but depends only on the designated variables $ a_1,a_2,\ldots $.
Recall that $ \hk(t,x) $ denotes the standard heat kernel.
The following bounds will be useful in our subsequent analysis.
The proof of these bounds is standard and hence omitted.

\begin{lem}\label{l.hkprop}
Fix $ a\in\R $ and $ \theta\in(0,\frac{1}{2}) $. There exists $ C=C(a,\theta,T) $ such that for all $ x,x'\in\R $ and $ s<t\in[0,T] $,
\begin{enumerate}[leftmargin=20pt, label=(\alph*)]
	\item \label{item:hk0} $\hk(t,x) \leq C t^{-1/2} e^{a|x|} $,	
	\item \label{item:hk1}
	$\int_\R \hkk{t}{x}{y} e^{a|y|} \d y \leq C e^{a|x|}$,
	\item \label{item:hk2}
	$\int_\R \hkk{t}{x}{y}^2 e^{a|y|} \d y \leq C t^{-\frac{1}{2}} e^{a|y|}$,
	\item \label{item:hk3}
	$\int_\R \big(\hkk{t}{x}{y} - \hkk{t}{x'}{y}\big)^2 e^{a|y|} \d y \leq C |x-x'|^{2\theta} \, t^{-\frac{1}{2} - \theta} (e^{a|x|}\vee e^{a|x'|}) $, and
	\item \label{item:hk4}
	$\int_\R \big(\hkk{t}{x}{y} - \hkk{s}{x}{y}\big)^2 e^{a|y|} \d y \leq C |t-s|^\theta \, s^{-\frac{1}{2} - \theta} e^{a|x|} $.
\end{enumerate}

Fix $ a\in\R $, $ \cuttime\in(0,T) $, and $ \theta\in(0,\frac{1}{2}) $. There exists $ C=C(a,\theta,T,\cuttime) $ such that for all $ s<t\in[\cuttime,T] $ and $ x,x',y\in\R $,
\begin{enumerate}[leftmargin=20pt, label=(\roman*)]
	\item \label{item:hk5} $|\hkk{t}{x}{y} - \hkk{t}{x'}{y}| \leq C |x-x'|^\theta (e^{a|x-y|}\vee e^{a|x'-y|}) $, and
	\item \label{item:hk6} $|\hkn{t}{x} - \hkn{s}{x}| \leq C |t-s| e^{a|x|} $.
\end{enumerate}
\end{lem}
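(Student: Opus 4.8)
The statement collects standard heat kernel estimates, so the plan is to isolate the three elementary facts they all reduce to and then indicate the (routine) case analysis for each part. The first fact is the weight-shifting inequality $e^{a|y|}\leq e^{a|x|}e^{|a|\,|x-y|}$ for all $x,y\in\R$, immediate from $a|y|-a|x|\leq|a|\,\big||y|-|x|\big|\leq|a|\,|x-y|$; combined with the fact that $\hk(t,z)^2$, $(\partial_z\hk(t,z))^2$ and $(\partial_z^2\hk(t,z))^2$ are, up to powers of $t$, a Gaussian density in $z$ times a polynomial in $z^2/t$, it reduces every integral in the lemma to the single estimate $\int_\R\hk(t,z)e^{b|z|}\,\d z\leq C(b,T)$, valid since $t\leq T$. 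The second fact is the pointwise bound $\hk(t,x)=(2\pi t)^{-1/2}e^{-x^2/2t}\leq Ct^{-1/2}e^{a|x|}$, which holds because $\sup_{u\geq0}\{-u^2/(2t)-au\}$ is bounded uniformly for $t\leq T$; this is part (a). The third fact is the family of Gaussian--polynomial estimates
\[
	|z|^k\hk(t,z)^2\leq Ct^{(k-1)/2}\hk(t,z),\qquad k\in\{0,1,2,4\},
\]
obtained from $v^{k/2}e^{-v}\leq C_ke^{-v/2}$ with $v=z^2/t$, together with the analogues for $(\partial_z\hk)^2$ (order $t^{-3/2}\hk$) and $(\partial_z^2\hk)^2$ (order $t^{-5/2}\hk$), coming from $\partial_z\hk(t,z)=-(z/t)\hk(t,z)$ and the heat equation $\partial_t\hk=\tfrac12\partial_z^2\hk$.

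Given these, parts (b) and (c) are direct: for (b) apply the weight-shifting inequality and integrate; for (c) write $\hk(t,z)^2=Ct^{-1/2}\hk(t/2,z)$ and reduce to (b). For the H\"older-type bounds (d) and (e) I would run a two-regime argument. For (d): when $|x-x'|^2\geq t$, bound $(\hkk{t}{x}{y}-\hkk{t}{x'}{y})^2\leq2\hkk{t}{x}{y}^2+2\hkk{t}{x'}{y}^2$ and integrate by (c), noting $t^{-1/2}\leq|x-x'|^{2\theta}t^{-1/2-\theta}$ in this range; when $|x-x'|^2<t$, write the difference as $-(x-x')\int_0^1\partial_x\hk(t,x_r-y)\,\d r$ with $x_r:=x'+r(x-x')$, apply Cauchy--Schwarz in $r$, integrate in $y$ using the $(\partial_z\hk)^2$ estimate and (b) (absorbing $e^{a|x_r|}\leq e^{|a|\,|x-x'|}(e^{a|x|}\vee e^{a|x'|})\leq C(e^{a|x|}\vee e^{a|x'|})$ since $|x-x'|\leq\sqrt T$), which gives a bound of order $|x-x'|^2t^{-3/2}$, and this is $\leq|x-x'|^{2\theta}t^{-1/2-\theta}$ precisely because $|x-x'|^2<t$. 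Part (e) is identical with $\partial_t\hk=\tfrac12\partial_z^2\hk$ in place of $\partial_x\hk$, splitting on whether $|t-s|\geq s$ or $|t-s|<s$ and using the $(\partial_z^2\hk)^2$ estimate on $[s,t]$. Parts (i) and (ii), living on $[\cuttime,T]$, are the easier pointwise analogues: for $t,s\geq\cuttime$ the estimates above give $|\partial_x\hk(t,z)|\leq Ce^{a|z|}$ and $|\partial_t\hk(t,z)|\leq Ce^{a|z|}$ uniformly, so the mean value theorem yields the Lipschitz bound (ii) directly, and for (i) one further splits on $|x-x'|\lessgtr1$ to trade the Lipschitz bound for the H\"older bound.

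I do not expect a genuine obstacle here. The only points needing care are the bookkeeping of the weight $e^{a|x|}$ for negative $a$ — handled uniformly by the weight-shifting inequality together with $|x-x'|\vee|t-s|\leq C$ on the relevant domains — and checking in (d)--(e) that the exponents of $|x-x'|$ (resp.\ $|t-s|$) and of $t$ (resp.\ $s$) balance after interpolating the crude $L^2$ bound against the derivative bound. This last balancing is exactly why one needs the two-regime split rather than a single naive derivative estimate: the derivative estimate alone overshoots the blow-up rate, and only the interpolation recovers the sharp factor $t^{-1/2-\theta}$ as $t\to0$.
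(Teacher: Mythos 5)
The paper omits the proof of this lemma with the note that it is ``standard,'' so there is no in-paper argument to compare against. Your proposal is a correct, self-contained version of the standard argument, and I checked each part. The reduction to the three elementary facts (weight-shifting $e^{a|y|}\le e^{a|x|}e^{|a|\,|x-y|}$, the pointwise bound $\hk(t,x)\le Ct^{-1/2}e^{a|x|}$, and the Gaussian--polynomial bounds $|z|^k\hk(t,z)^2\le Ct^{(k-1)/2}\hk(t,z)$ together with $(\partial_z\hk)^2\lesssim t^{-3/2}\hk$, $(\partial_z^2\hk)^2\lesssim t^{-5/2}\hk$) is exactly the clean way to organize it, and the two-regime interpolation in (d)--(e) correctly balances the exponents: the crude $L^2$ bound gives $t^{-1/2}$ (resp.\ $s^{-1/2}$), the derivative bound gives $|x-x'|^2t^{-3/2}$ (resp.\ $|t-s|^2s^{-5/2}$), and taking whichever is smaller produces the sharp factor $|x-x'|^{2\theta}t^{-1/2-\theta}$ (resp.\ $|t-s|^{\theta}s^{-1/2-\theta}$). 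For (i)--(ii) the restriction $t,s\ge\cuttime$ is precisely what lets one bound $|\partial_x\hk|,|\partial_t\hk|\le Ce^{a|z|}$ pointwise, and your additional split on $|x-x'|\lessgtr1$ in (i) correctly controls the weight factor when $a<0$. No gaps.
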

\noindent{}%
The next lemma gives a bound on $ \normSup{\chaosfn_{n}(\dev)}{a} $
and verifies the convergence of the right hand side of \eqref{e.Zfn.expansion}.
\begin{lem}
\label{l.chaosfn.bd}
Fix $ a>a_* $.
There exists $ C=C(T,a) $ such that,
for all $ \dev\in\Lsp^2(\rt) $ and $ n\in\Zn $, 
we have $ \normSup{\chaosfn_{n}(\dev)}{a} \leq \frac{C^n}{\Gamma(n/2)^{1/2}} \normL{\dev}^n $.
\end{lem}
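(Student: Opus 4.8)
The plan is to estimate $\normSup{\chaosfn_{n}(\dev)}{a}$ directly from the multiple-integral representation \eqref{e.chaosfn}, peeling off the time-ordered integral one layer at a time and using the Cauchy--Schwarz inequality in the noise variables $\dev(s_i,y_i)$.

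First I would bound the spatial integrals. Fix $(t,x)\in\rt$ and multiply by $e^{-a|x|}$. Using Lemma~\ref{l.hkprop}\ref{item:hk1} repeatedly, the factor $\int_\R \hkk{s_n-s_{n+1}}{y_n}{y_{n+1}}\ini(y_{n+1})\,\d y_{n+1}$ is bounded by $C\normSup{\ini}{a'}e^{a'|y_n|}$ for some $a_*<a'<a$, and then each successive kernel $\hkk{s_{i-1}-s_i}{y_{i-1}}{y_i}$ can be used to propagate the exponential weight inward. The subtlety is that we want to keep one power of each kernel available to absorb the weights while pairing the \emph{other} power against $\dev$ via Cauchy--Schwarz; so I would write, for each $i$,
\begin{align*}
	\int_\R \hkk{s_{i-1}-s_i}{y_{i-1}}{y_i}\,|\dev(s_i,y_i)|\,e^{a'|y_i|}\,\d y_i
	\leq
	\Big(\int_\R \hkk{s_{i-1}-s_i}{y_{i-1}}{y_i}^2 e^{2a'|y_i|}\,\d y_i\Big)^{1/2}
	\Big(\int_\R |\dev(s_i,y_i)|^2\,\d y_i\Big)^{1/2},
\end{align*}
and invoke Lemma~\ref{l.hkprop}\ref{item:hk2} (with $2a'<2a$, say, or adjust the weight exponents so everything stays below $a$) to bound the first factor by $C(s_{i-1}-s_i)^{-1/4}e^{a'|y_{i-1}|}$. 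Actually it is cleaner to apply Cauchy--Schwarz simultaneously to all $n$ noise variables: bound the product $\prod_i |\dev(s_i,y_i)|$ against $\prod_i (\text{kernel})^{1/2}$, so that what remains is $\normL{\dev}^n$ times a deterministic integral $\int_{\Delta_n(t)}\prod_i (s_{i-1}-s_i)^{-1/4}\,\d\vec s$ (the $s$-integrals now decoupled from $y$ after the spatial weights telescope down to $e^{a'|x|}\le e^{a|x|}$, up to the constant $C^n$ coming from the $n$ applications of Lemma~\ref{l.hkprop}).

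The remaining step is the time integral. The simplex integral $\int_{\Delta_n(t)}\prod_{i=1}^n (s_{i-1}-s_i)^{-1/4}\,\d s_1\cdots\d s_n$ is a Dirichlet-type integral: substituting $u_i = s_{i-1}-s_i$ turns it into $\int_{u_i\ge 0,\ \sum u_i \le t} \prod u_i^{-1/4}\,\d u$, which by the Dirichlet integral formula equals $\frac{\Gamma(3/4)^n}{\Gamma(3n/4+1)}\,t^{3n/4}$. Since $\Gamma(3n/4+1)\ge c\,\Gamma(n/2)$ for an absolute constant (in fact grows much faster), this gives the claimed $\frac{C^n}{\Gamma(n/2)^{1/2}}\normL{\dev}^n$ bound after absorbing $t^{3n/4}\le T^{3n/4}$ and $\normSup{\ini}{a'}$ into $C^n$; one can be generous here since even a $1/\Gamma(n/2)$ decay is weaker than what the Dirichlet formula yields. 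I would finish by noting the argument is uniform in $(t,x)$, so it bounds $\normSup{\chaosfn_n(\dev)}{a}$, and then the convergence of \eqref{e.Zfn.expansion} in $\normSup{\Cdot}{a}$ follows since $\sum_n \frac{C^n}{\Gamma(n/2)^{1/2}}\normL{\dev}^n < \infty$.

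The main obstacle is purely bookkeeping: arranging the weight exponents $a' < a$ (and the factor-of-two losses from squaring kernels in Cauchy--Schwarz) so that at every stage the exponential weight that gets propagated stays below $a$ while each squared-kernel integral stays finite via Lemma~\ref{l.hkprop}\ref{item:hk2}. A clean way around the factor-of-two issue is to apply Cauchy--Schwarz with an unequal split — pairing $\hkk{}{}{}^{1/2}|\dev|$ one way does not quite work because we need a full kernel power left for weight propagation; so instead I would split each kernel as $\hkk{}{}{} = \hkk{}{}{}^{1-\theta}\cdot\hkk{}{}{}^{\theta}$ for small $\theta$, or more simply note that $\hkk{s}{x}{y}^2 e^{a|y|} \le \hkk{s}{x}{y}\cdot (C s^{-1/2} e^{a|y|} \hkk{s}{x}{y})$ and absorb the extra $s^{-1/2}$ — either route trades a slightly worse power of $(s_{i-1}-s_i)$ (still integrable over the simplex) for keeping the weight control exact. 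None of this affects the final form of the estimate.
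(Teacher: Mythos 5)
Your plan is essentially the paper's, only unrolled into an explicit $n$-fold estimate: separate $\dev$ from the kernels by Cauchy--Schwarz, control the weighted-$L^2$ kernel integrals by Lemma~\ref{l.hkprop}\ref{item:hk2}, and let a Dirichlet-type simplex integral produce the $\Gamma$-decay. The paper packages the same idea as a one-step recursion for $F_n(t):=\sup_{x}e^{-2a|x|}|\chaosfn_n(\dev;t,x)|^2$, namely $F_n(t)\le C\normL{\dev}^2\int_0^t(t-s)^{-1/2}F_{n-1}(s)\,\d s$ with $F_0\le C$; this bypasses all of the weight-propagation bookkeeping you flag, since the weight $a$ is applied identically at each iterate and no kernel-splitting or unequal-exponent Cauchy--Schwarz is ever needed. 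One slip in your sketch: after peeling Cauchy--Schwarz in each $y_i$, what survives under the simplex integral is $\prod_i\big(\int_\R|\dev(s_i,y)|^2\,\d y\big)^{1/2}(s_{i-1}-s_i)^{-1/4}$, and the $\dev$-dependent product does \emph{not} factor out as $\normL{\dev}^n$ against a deterministic $s$-integral as you claim; you need either one further Cauchy--Schwarz in $\vec s$ (which gives $\big(\int_{\Delta_n(t)}\prod(s_{i-1}-s_i)^{-1/2}\,\d\vec s\big)^{1/2}$, not the $-1/4$-Dirichlet integral you wrote) or the simplex identity $\int_{\Delta_n(t)}\prod_i g(s_i)\,\d\vec s=\tfrac{1}{n!}\big(\int_0^t g\big)^n$ applied to $g(s)=\int_\R|\dev(s,y)|^2\,\d y$. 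Either fix closes the argument and in fact yields a decay strictly stronger than the claimed $\Gamma(n/2)^{-1/2}$, so the conclusion stands.
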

\begin{proof}
Throughout this proof we write $ C=C(T,a) $.
Let $ F_n(t) := \sup_{x\in\R} e^{2a|x|}|\chaosfn_n(\dev;t,x)|^2 $.
For $ n=0 $, we have $ \chaosfn_{0}(\dev;t,x) = \int_\R \hkk{t}{x}{y} \ini(y)\d y $.
That $ \ini\in\Csp_{a^*_+}(\R) $ implies $ |\ini(y)| \leq C e^{a|y|} $. 
Combining this with Lemma~\ref{l.hkprop}\ref{item:hk1} gives $ F_0(t) \leq C $.
Next, for $ n\geq 1 $, referring to \eqref{e.chaosfn}, we see that $ \chaosfn_n(\dev;t,x) $ can be expressed iteratively as
\begin{align*}
	\chaosfn_n(\dev;t,x) = \int_0^t \int_\R \hkk{t-s}{x}{y} \chaosfn_{n-1}(\dev;s,y) \dev(s,y) \d s \d y.
\end{align*} 
Take square on both sides and apply the Cauchy--Schwarz inequality to get
$
	\chaosfn_n(\dev;t,x)^2
	\leq
	\int_0^t \int_\R \hkk{t-s}{x}{y}^2 \chaosfn_{n-1}(\dev;s,y)^2 \d s \d y \, \normL{\dev}^2.
$
Within the last integral, use $ \chaosfn_{n-1}(\dev;s,y)^2 \leq F_{n-1}(s) e^{2a|y|} $ and Lemma~\ref{l.hkprop}\ref{item:hk2},
and divide both sides by $ e^{-2a|x|} $.
We obtain $ F_n(t) \leq C \normL{\dev}^2 \int_0^t F_{n-1}(s) (t-s)^{-1/2} \d s $.
Iterating this inequality and using $ F_0(t) \leq C $ complete the proof.
\end{proof}

As it turns out, the function $ (\chaos_{n})_\text{hom}(\dev) $ in \eqref{e.chaoshom} is equal to $ \chaosfn_{n}(\dev) $ in \eqref{e.chaosfn}.
\begin{lem}
\label{l.chaosfn}
For any $ \dev\in\Lsp^2(\rt) $ and $ n\in\Zn $, we have
$
	(\chaos_n)_\mathrm{hom}(\dev)
	=
	\chaosfn_{n}(\dev).
$
\end{lem}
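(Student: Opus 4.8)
The plan is to establish the identity pointwise in $ (t,x)\in\rt $ and then upgrade to an equality of $ \Csp_a(\rt) $-valued maps: since evaluation at a fixed $ (t,x) $ is a bounded linear functional on $ \Csp_a(\rt) $, it commutes with the Bochner integral defining $ (\chaos_n)_\text{hom}(\dev) $ (that integral converges by the Cameron--Martin theorem, as already noted), so it suffices to show $ \int_\bana \chaos_n(\xi+\dev)(t,x)\,\mu(\d\xi)=\chaosfn_n(\dev;t,x) $ for each fixed $ (t,x) $. I would prove this by induction on $ n $. The base case $ n=0 $ is immediate: $ \chaos_0(t,x)=\int_\R\hkk{t}{x}{y}\ini(y)\,\d y $ does not depend on $ \xi $, so $ (\chaos_0)_\text{hom}(\dev)=\chaos_0=\chaosfn_0(\dev) $.

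For the inductive step I would start from the recursion for $ \chaos_n $ implied by \eqref{e.chaos}: for $ n\ge 1 $,
\begin{align*}
	\chaos_n(t,x) &= \int_0^t\int_\R\hkk{t-s}{x}{y}\,\chaos_{n-1}(s,y)\,\xi(s,y)\,\d s\,\d y .
\end{align*}
Translating $ \xi\mapsto\xi+\dev $ in this $ \mu $-a.s.\ identity is legitimate because the translated identity then holds on the image of a full-measure set under a Cameron--Martin shift, hence on a set of full $ \mu $-measure; under the translation the white-noise integral splits into a Walsh integral against $ \xi $ plus a Lebesgue integral against the drift $ \dev $, so that $ \mu $-a.s.
\begin{align*}
	\chaos_n(\xi+\dev)(t,x)
	&= \int_0^t\int_\R\hkk{t-s}{x}{y}\,\chaos_{n-1}(\xi+\dev)(s,y)\,\xi(s,y)\,\d s\,\d y \\
	&\quad + \int_0^t\int_\R\hkk{t-s}{x}{y}\,\chaos_{n-1}(\xi+\dev)(s,y)\,\dev(s,y)\,\d s\,\d y .
\end{align*}
Now I would integrate both sides against $ \mu(\d\xi) $. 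The first term on the right vanishes: the iterated structure of \eqref{e.chaos} makes $ \chaos_{n-1}(\xi+\dev)(s,y) $ depend on $ \xi $ (and on $ \dev $) only through their restrictions to times $ <s $, so the integrand is adapted and square integrable, and the Walsh integral of such an integrand has mean zero. The second term, by Fubini, becomes $ \int_0^t\int_\R\hkk{t-s}{x}{y}\,(\chaos_{n-1})_\text{hom}(\dev)(s,y)\,\dev(s,y)\,\d s\,\d y $. Hence
\begin{align*}
	(\chaos_n)_\text{hom}(\dev)(t,x) &= \int_0^t\int_\R\hkk{t-s}{x}{y}\,(\chaos_{n-1})_\text{hom}(\dev)(s,y)\,\dev(s,y)\,\d s\,\d y ,
\end{align*}
which is exactly the recursion obeyed by $ \chaosfn_n(\dev) $ in the proof of Lemma~\ref{l.chaosfn.bd}; the induction hypothesis $ (\chaos_{n-1})_\text{hom}(\dev)=\chaosfn_{n-1}(\dev) $ then yields $ (\chaos_n)_\text{hom}(\dev)=\chaosfn_n(\dev) $.

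The step I expect to require the most care is the Girsanov-type manipulation: that translating $ \xi\mapsto\xi+\dev $ in the mild iteration converts the Walsh integral into a Walsh integral against $ \xi $ plus a Lebesgue integral against $ \dev $, even though the integrand $ \chaos_{n-1} $ is itself translated. I would make this rigorous by verifying it first for simple adapted integrands and then passing to the $ \Lsp^2 $ limit. An alternative that avoids the recursion altogether is to expand $ \chaos_n(\xi+\dev) $ directly from the multiple stochastic integral formula \eqref{e.chaos} and use that a Cameron--Martin shift of an $ n $-fold iterated It\^o integral, after integration against $ \mu $, collapses onto its zeroth-order term, namely the full contraction of the kernel against $ \dev^{\otimes n} $, which is precisely $ \chaosfn_n(\dev) $ as written in \eqref{e.chaosfn}. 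Either way, the remaining inputs --- integrability, Fubini, and continuity of point evaluation --- are routine given Proposition~\ref{p.normSup.bd} and Lemma~\ref{l.chaosfn.bd}.
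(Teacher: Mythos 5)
Your induction works, but it is a genuinely different route from the paper's. The paper proves the identity in one shot: by Cameron--Martin, $ (\chaos_n)_\text{hom}(\dev) = \E\big[\exp\big(\pair(\dev)-\tfrac12\normL{\dev}^2\big)\chaos_n\big] $; the exponential is then expanded in Hermite polynomials via \eqref{e.hermiteid}, each $ \normL{\dev}^m \hermite_m(\pair(\dev/\normL{\dev})) $ is recognized via \eqref{e.hermiteint} as an $ m $-fold iterated stochastic integral lying in the $ m $-th Wiener chaos, and orthogonality of chaoses against $ \chaos_n $ kills every term except $ m=n $, whose evaluation gives precisely the kernel contraction $ \chaosfn_n(\dev) $. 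That argument leverages the chaos decomposition itself to do the bookkeeping, avoiding any stochastic-calculus manipulation. Your proof instead uses the Duhamel recursion together with a Girsanov-type split of the shifted Walsh integral, then integrates out the martingale part. This buys a more hands-on, filtration-based picture and does not need the Hermite machinery, at the cost of the extra care you rightly flag: you must justify that translating $ \xi\mapsto\xi+\dev $ inside the $ \mu $-a.s.\ recursion splits the Walsh integral into a genuine Walsh integral plus a Lebesgue drift term (done on simple adapted integrands and then by $ \Lsp^2 $ approximation, using that $ \dev $ is deterministic so $ \chaos_{n-1}(\xi+\dev)(s,\cdot) $ stays adapted), and you must supply the square-integrability needed for the mean-zero step (which follows from Proposition~\ref{p.chaos.2ndmom} together with a Cameron--Martin change of measure). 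Your noted ``alternative'' -- shifting the $ n $-fold iterated integral directly and keeping only the zeroth-order contraction against $ \dev^{\otimes n} $ -- is in substance the paper's argument, just organized without the Hermite generating function. Both routes are sound; the paper's is shorter and dodges the adaptedness/integrability bookkeeping, whereas yours makes the recursive structure linking $ \chaos_n $ and $ \chaosfn_n $ explicit and would generalize cleanly to other Duhamel-type fixed points.
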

\begin{proof}
Recall the notation $ \pair(f) $ from \eqref{e.pair}.
Since $ \dev\in\Lsp^2(\rt) $, the Cameron--Martin theorem gives 
\begin{equation}
	\label{e.chaofn.CM}
	(\chaos_{n})_\text{hom}(\dev) 
	:= 
	\int_{\bana} \chaosfn_n(\dev+\xi) \mu(\d \xi) 
	=
	\E\big[ \exp\big( \pair(\dev) - \tfrac{1}{2} \normL{\dev}^2\big) \chaosfn_n \big].
\end{equation}
Taking $ \tau = \normL{\dev} $ and $ x =\pair(\dev/\normL{\dev}) $ in \eqref{e.hermiteid} gives 
$
	\exp( \pair(\dev) - \frac{1}{2} \normL{\dev}^2 ) 
	= 
	\sum_{m=0}^\infty 
	\normL{\dev}^m
	\hermite_m(\pair(\dev/\normL{\dev})). 
$
Invoke the well-known identity, c.f., \cite[Proposition 1.1.4]{Nua06},
\begin{equation}
	\label{e.hermiteint}
	\normL{\dev}^m
	\hermite_m(\pair(\dev/\normL{\dev})) 
	= 
	\int_{\Delta_m(T)} \int_{\R^m} \prod_{i=1}^m \dev(s_i, y_i) \xi(s_i, y_i) \d s_i \d y_i,
\end{equation}
insert the result into \eqref{e.chaofn.CM}, and exchange the sum and expectation in the result.
We have
\begin{align*}
	(\chaos_{n})_\text{hom}(\dev;t,x)
	= 
	\sum_{m = 0}^\infty	
	\E\bigg[
		\Big(
		\int_{\Delta_m (T)} 
		\int_{\R^m} \dev^{\otimes m}(\vec{s}, \vec{y}) 
		\prod_{i=1}^m \xi(s_i, y_i) \,  \d s_i \d y_i
		\Big) 
		\ 	
		\chaos_n(t,x)
	\bigg].
\end{align*} 
Within the last expression,
the random variable on the right hand side of  \eqref{e.hermiteint} belongs to the $ m $-th $ \R $-valued Wiener chaos.
Since $ \chaos_n $ belongs to the $ n $-th $ \banaE $-valued Wiener chaos,
the expectation is nonzero only when $ m=n $.
Calculating this expectation from \eqref{e.chaos} concludes the desired result.
\end{proof}

\subsection{The narrow wedge initial data}
\label{s.chaos.nw}
Throughout this subsection we fix $ 0<\cuttime<T<\infty $ and $ a\in\R $,
and initiate the \ac{SHE} \eqref{e.she.scaled} from $ \ZZe(0,\Cdot) = \delta_0(\Cdot) $.

For the Wiener space formalism,
the spaces $ \hilb=\Lsp^2(\rt) $ and $ \bana $ remain the same as in Section~\ref{s.chaos.fnvalued},
while the space $ \banaE $ now changes to $ \banaE = \Csp_a(\rtz) $.
The chaos expansion takes the same form as \eqref{e.chaos.expans} but with
\begin{equation}
\tag{\ref*{e.chaos}-nw}
\label{e.chaos.nw}
\chaos_{n} (t,x) 
:= 
\int_{\Delta_n (t)}
\int_{\R^{n+1}} \hk(s_{n}-s_{n+1},y_n)
\,
\prod_{i=1}^n \hkk{s_{i-1}-s_{i}}{y_{i-1}}{y_{i}} \xi(s_i, y_i) \,  \d s_i \d y_i.
\end{equation} 
Recall the norm $ \normSup{\Cdot}{a,\cuttime} $ from \eqref{e.normSup.cut}.
Proposition~\ref{p.normSup.bd.nw} in the following asserts that each $ \chaos_n $ defines a $ \Csp_a(\rtz) $-valued random variable,
and Corollary~\ref{c.normSup.bd.nw} asserts that the right hand side of \eqref{e.chaos.expans} converges in $ \normSup{\Cdot}{a,\cuttime} $ almost surely.
The functions $ (\chaos_{n})_\text{hom}(\dev) $ and $ \rate_N $ are defined the same way as in Section~\ref{s.chaos.fnvalued},
but with $ \Csp_a(\rtz) $ in place of $ \Csp_a(\rt) $.
More explicitly,
\begin{align}
	\tag{\ref*{e.chaoshom}-nw}
	\label{e.chaoshom.nw}
	&(\chaos_n)_\text{hom}: \Lsp^2(\rt) \to \Csp_a(\rtz),
	\qquad
	(\chaos_{n})_\text{hom}(\dev) \!:=\! \int_{\bana} \chaos_{n}(\xi+\dev) \, \mu(\d \xi),
\\
	\tag{\ref*{e.rateN.}-nw}
	\label{e.rateN..nw}
	&\rate_N: \Csp_a(\rtz) \to \R\cup\{+\infty\},
	\
	\rate_N (f) \!:=\! \inf\Big\{\tfrac{1}{2} \normL{\dev}^2  : \dev \in \Lsp^2(\rt), \ \sum_{n=0}^N (\chaos_n)_\text{hom}(\dev) = f \Big\},
\end{align}
with the convention $ \inf\emptyset := +\infty $.

Likewise, for the equation \eqref{e.he.nw}, the unique solution is given by the expansion \eqref{e.Zfn.expansion} but with
\begin{align}
	\tag{\ref*{e.chaosfn}-nw}
	\label{e.chaosfn.nw}
	\chaosfn_{n}(\dev;t,x)
	:=
	\int_{\Delta_n(t)}\int_{\R^{n}}  \hk(s_n-s_{n+1},y_n) \, \prod_{i=1}^n \hkk{s_{i-1}-s_{i}}{y_{i-1}}{y_i} \dev (s_i,y_i) \d s_i \d y_i.
\end{align}
Similar proofs of Proposition~\ref{p.HW} and Lemmas~\ref{l.chaosfn.bd} and \ref{l.chaosfn} applied in the current setting give
\begin{customprop}{\ref*{p.HW}-nw}
\label{p.HW.nw}
For any fixed $ a\in\R $ and $ \cuttime\in(0,T) $, the function $ \rate_N $ in \eqref{e.rateN..nw} is a good rate function.
For fixed $ N<\infty $,
$
	\{ Z_{N,\e} := \sum_{n=0}^N \e^{\frac{n}{2}} \chaos_n \}_\e
$
satisfies an \ac{LDP} on $\Csp_a(\rt)$ with speed $ \e^{-1} $
and the rate function $ \rate_N $.
\end{customprop}

\begin{customlem}{\ref*{l.chaosfn.bd}-nw}
\label{l.chaosfn.bd.nw}
Fix $ a\in\R $ and $ \cuttime<T\in(0,\infty) $.
There exists $ C=C(T,a,\cuttime) $ such that,
for all $ \dev\in\Lsp^2(\rt) $ and $ n\in\Zn $, 
we have $ \normSup{\chaosfn_{n}(\dev)}{a,\cuttime} \leq \frac{C^n}{\Gamma(n/2)^{1/2}} \normL{\dev}^n $.
\end{customlem}

\begin{customlem}{\ref*{l.chaosfn}-nw}
\label{l.chaosfn.nw}
For any $ \dev\in\Lsp^2(\rt) $ and $ n\in\Zn $, we have
$
	(\chaos_n)_\mathrm{hom}(\dev)
	=
	\chaosfn_{n}(\dev).
$
\end{customlem}

\section{Freidlin--Wentzell LDP for the SHE}
\label{s.FW}
%

\subsection{Function-valued initial data}
\label{s.FW.fnvalued}

Throughout this subsection, we fix $ T<\infty $, $ a_* \in \R$, and $ \ini\in\Csp_{a_*^+}(\R) = \cap_{a>a_*} \Csp_a(\R) $,
and let $ \ZZ_\e $ denote the solution of \eqref{e.she.scaled} with the initial data $ \ini $.

Recall from Proposition~\ref{p.HW} that
$
	Z_{N,\e} := \sum_{n=0}^N \e^{\frac{n}{2}} \chaos_n
$
satisfies an \ac{LDP} with the rate function $ \rate_N $ given in \eqref{e.rateN.}.
By Lemma~\ref{l.chaosfn}, the function $ \rate_N $ can be expressed as
\begin{equation}
	\label{e.rateN}
	\rate_N (f) := \eqref{e.rateN.} = \inf\Big\{\tfrac{1}{2} \normL{\dev}^2 \, : \, \dev \in \Lsp^2(\rt), \ \sum_{n=0}^N \chaosfn_n(\dev) = f \Big\}.
\end{equation}
Recall that $ \Zfn(\dev) = \sum_{n=0}^\infty \chaosfn_n(\dev) $.
Referring to the definition of $ \rate $ in \eqref{e.rate},
we see that formally taking $ N\to\infty $ in \eqref{e.rateN} produces $ \rate(f) $.
The proof of Proposition~\ref{t.FW} hence amounts to justifying this limit transition at the level of \acp{LDP}.
Key to justifying such a limit transition is 
a tight enough  bound on the tail probability $ \P[\normSup{\chaos_n}{a}\geq r] $, which we establish in Section~\ref{s.chaostail}.

\subsubsection{Tail probability of $ \normSup{\chaos_{n}}{a} $}
\label{s.chaostail}
We will utilize the fact that, for any $ (t,x)\in\rt $, 
the random variable $ \chaos_n(t,x) $ belongs to the $ n $-th $ \R $-valued Wiener chaos.
For $ X $ in the $n$-th $ \R $-valued Wiener chaos,
the hypercontractivity inequality asserts that 
higher moments of $ X $ are controlled by the second moments, c.f., \cite[Theorem 1.4.1]{Nua06},
\begin{align}
	\label{e.hyperc}
	\E \big[|X|^{p}\big] \leq p^{\frac{np}{2}} \big( \E\big[|X|^{2}\big] \big)^{\frac{p}{2}},
	\quad
	\text{for all } p \geq 2.
\end{align}
We now use this inequality to produce a tail probability bound.

\begin{lem}\label{l.hyperc}
Let $X$ be an $\R$-valued random variable in the $n$-th Wiener chaos and let $ \sigma^2:= \E[X^2] $. 
There exists a universal constant $ C \in (0,\infty) $ such that,
for all $ n\in\Zp $ and $ r\geq 0 $, 
\begin{align*}
	\P\big[ |X|\geq r \big] \leq \exp\big( - \tfrac{n}{C}\sigma^{-\frac{2}{n}} r^{\frac{2}{n}} +n \big).
\end{align*}
\end{lem}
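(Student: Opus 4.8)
The plan is to deduce the tail bound from the hypercontractivity estimate \eqref{e.hyperc} by optimizing over $p$. First I would apply Markov's inequality in the form $\P[|X|\geq r] \leq r^{-p}\,\E[|X|^p]$ for any $p\geq 2$, and substitute the bound $\E[|X|^p]\leq p^{np/2}\sigma^p$ from \eqref{e.hyperc}. This gives $\P[|X|\geq r]\leq \big(p^{n/2}\sigma/r\big)^p = \exp\!\big(p(\tfrac n2\log p + \log\sigma - \log r)\big)$ for all $p\geq 2$.

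Next I would choose $p$ to make the exponent as negative as possible. The natural choice is to take $p$ proportional to $(r/\sigma)^{2/n}$; concretely, setting $p := e^{-1}(r/\sigma)^{2/n}$ makes $\tfrac n2\log p + \log\sigma - \log r = \tfrac n2(\log p - \tfrac2n\log(r/\sigma)) = \tfrac n2(-1) = -\tfrac n2$, so that formally $\P[|X|\geq r]\leq \exp(-\tfrac n2\, p) = \exp\!\big(-\tfrac{n}{2e}\sigma^{-2/n}r^{2/n}\big)$. To make this legitimate I must handle the constraint $p\geq 2$: this choice of $p$ is $\geq 2$ precisely when $(r/\sigma)^{2/n}\geq 2e$, i.e. when $r$ is large relative to $\sigma$ (quantitatively, $r\geq \sigma(2e)^{n/2}$). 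In that regime the bound $\exp(-\tfrac{n}{2e}\sigma^{-2/n}r^{2/n})$ holds directly.

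In the complementary regime, where $r < \sigma(2e)^{n/2}$, i.e. $\sigma^{-2/n}r^{2/n} < 2e$, I would simply absorb the loss into the additive $+n$ term: there the exponent $-\tfrac{n}{C}\sigma^{-2/n}r^{2/n}+n$ is bounded below by $-\tfrac{2e\,n}{C}+n$, which is $\geq 0$ once $C\geq 2e$, so the asserted bound is trivially true because probabilities are $\leq 1$. Thus choosing, say, $C = 2e$ (or any larger universal constant) makes the statement hold for all $n\in\Zp$ and all $r\geq 0$ simultaneously. The only mild subtlety to check is the degenerate case $\sigma = 0$, where $X=0$ a.s. and the claim reads $\P[|X|\geq r]\leq\exp(-\infty+n)=0$ for $r>0$, which is correct, and for $r=0$ the right side exceeds $1$; this edge case should be dispatched in a line. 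I do not expect any real obstacle here — the argument is the standard Chernoff-type optimization — the only point requiring a little care is bookkeeping the universal constant so that both the large-$r$ and small-$r$ regimes are covered by a single inequality, which is why I would state the bound in the slightly lossy form with the $+n$ term rather than the sharp $-\tfrac{n}{2e}$ exponent.
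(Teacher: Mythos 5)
Your proof is correct. The route is genuinely different from the paper's, though both reduce to a Markov plus hypercontractivity argument. The paper proves an exponential moment bound $\E[\exp(\tfrac{n}{4e}|X|^{2/n})]\leq e^n$ by Taylor-expanding the exponential, splitting the series at $k=n$, using $\E[|X|^{2k/n}]\leq 1$ for the low-order terms and the hypercontractivity estimate \eqref{e.hyperc} for the high-order terms, and then summing; Markov's inequality applied to $\exp(\tfrac{n}{4e}|X|^{2/n})$ then produces the claim, with the additive $+n$ in the exponent coming from the moment bound $e^n$. You instead optimize the Chernoff inequality $\P[|X|\geq r]\leq r^{-p}\E[|X|^p]$ over $p\geq 2$ directly, feeding in \eqref{e.hyperc} and choosing $p = e^{-1}(r/\sigma)^{2/n}$; the additive $+n$ arises instead as the slack needed to cover the small-$r$ regime $p<2$ where the optimizer is infeasible. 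Your computation checks out: with that $p$ the exponent collapses to $-\tfrac{n}{2}p = -\tfrac{n}{2e}\sigma^{-2/n}r^{2/n}$, the constraint $p\geq 2$ holds exactly when $\sigma^{-2/n}r^{2/n}\geq 2e$, and in the complementary regime the right-hand side of the claimed inequality exceeds $1$ once $C\geq 2e$. The two approaches give the same conclusion up to the value of the universal constant; yours is slightly more direct since it avoids summing the Taylor series, while the paper's route produces the exponential moment bound as an intermediate object.
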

\begin{proof}
Assume without loss of generality $\sigma = 1$. 
We seek to bound $ \E[\exp(\alpha |X|^{2/n})] $ for $\alpha> 0$.
To this end, invoke Taylor expansion to get
$
	\E[\exp(\alpha |X|^{2/n})]  
	= 
	\sum_{k=0}^{n} \frac{1}{k!} \alpha^k \E[|X|^{2k/n}] 
	+ 
	\sum_{k=n+1}^{\infty} \frac{1}{k!} \alpha^k \E[|X|^{2k/n}].
$
On the right hand side, use \eqref{e.hyperc} to bound the moments for $ k \geq n+1 $.
As for $ k\leq n $, we simply bound $ \E[|X|^{2k/n}] \leq (\E[ |X|^2 ])^{k/n} = 1 $.
Combining these bounds gives
$
	\E[\exp(\alpha |X|^{2/n})]
	\leq 
	\sum_{k = 0}^{n} \frac{1}{k!} \alpha^k  + \sum_{k=n+1}^{\infty} \frac{1}{k!} \alpha^k (\frac{2k}{n})^{k} .
$
The first term on the right hand side is bounded by $e^{\alpha}$. 
For the second term, using the inequality $ k^k \leq e^k k!$ gives 
$ \sum_{k= n+1}^{\infty} \frac{1}{k!} \alpha^k (\frac{2k}{n})^{k} \leq \sum_{k = n+1}^\infty (\frac{2e\alpha}{n})^k$. 
Combining these bounds and setting $\alpha = n/(4e) $ in the result gives
$ \E[\exp(\frac{n}{4e} |X|^{2/n})] \leq e^{\frac{n}{4e}} + 2^{-n} \leq e^n $. 
Now applying Markov's inequality completes the proof. 
\end{proof}

In light of Lemma~\ref{l.hyperc},
bounding the tail probability of $ \chaos_{n}(t,x) $ amounts to bounding its second moment,
which we do next.
Recall that $ T $, $ \ini\in\Csp_{a_*^+}(\R) $, and $ a_*\in\R $ are fixed throughout this section.

\begin{prop}\label{p.chaos.2ndmom}
Fix $ a>a_* $, $ \theta_1\in(0,1) $, $\theta_2\in(0,\frac12) $, and $ n \in \Zp $. 
There exists $ C=C(T,a,\theta_1,\theta_2) $ such that for all $ t, t'\in[0,T] $ and $ x,x'\in \R $,
\begin{enumerate}[leftmargin = 20pt, label = (\alph*)]
\item \label{item:chaosmoment onept} $\E\big[\chaos_n(t, x)^2\big] \leq  e^{2a|x|} \frac{C^n}{\Gamma(\frac{n}{2})} $,
\item \label{item:chaosmoment.x}
$
	\E\big[\big(
		\chaos_n(t, x) - \chaos_n(t, x')
	\big)^2\big] 
	\leq 
	\frac{C^n}{\Gamma(\frac{n}{2})} ( e^{2a|x|} \vee e^{2a|x'|})
	|x - x'|^{\theta_1}
$, and 
\item \label{item:chaosmoment.t}
$
	\E\big[\big(
		\chaos_n(t, x) - \chaos_n(t', x)
	\big)^2\big] 
	\leq 
	\frac{C^n}{\Gamma(\frac{n}{2})} e^{2a|x|}
	|t - t'|^{\theta_2}
$.
\end{enumerate}
\end{prop}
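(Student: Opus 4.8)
The plan is to reduce each second moment to a deterministic integral via the isometry for iterated stochastic (Wiener--It\^o) integrals, then to integrate out the spatial variables one at a time using Lemma~\ref{l.hkprop}, and finally to evaluate the residual integral over the time simplex $ \Delta_n(t) $ by a Dirichlet (beta) integral. First I would fix the reusable pieces. Since $ \ini\in\Csp_{a_*^+}(\R) $ and $ a>a_* $, we have $ |\ini(y)|\le Ce^{a|y|} $, so setting $ g(s,y):=\int_\R\hkk{s}{y}{z}\ini(z)\,\d z $, Lemma~\ref{l.hkprop}\ref{item:hk1} gives $ |g(s,y)|\le Ce^{a|y|} $. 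From \eqref{e.chaos} the random variable $ \chaos_n(t,x) $ is the $ n $-fold iterated stochastic integral over $ \Delta_n(t) $ of the deterministic kernel $ F^{t,x}_n(\vec s,\vec y)=\hkk{t-s_1}{x}{y_1}\,\big(\prod_{i=2}^n\hkk{s_{i-1}-s_i}{y_{i-1}}{y_i}\big)\,g(s_n,y_n) $, so the isometry yields $ \E[\chaos_n(t,x)^2]=\int_{\Delta_n(t)}\int_{\R^n}(F^{t,x}_n)^2 $. The step I would isolate once and reuse is ``peeling'': for a nonnegative integrand of the form $ e^{2a|y_k|}\,\hkk{s_{k-1}-s_k}{y_{k-1}}{y_k}^2\times(\text{factors not containing }y_k) $, integrating in $ y_k $ and invoking Lemma~\ref{l.hkprop}\ref{item:hk2} (applied with $ 2a $ in place of $ a $) replaces it by $ C(s_{k-1}-s_k)^{-1/2}\,e^{2a|y_{k-1}|}\times(\text{the same factors}) $. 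Finally I would record the Dirichlet identity $ \int_{\{u_i>0,\ \sum_{i\le n}u_i<t\}}\prod_{i=1}^n u_i^{\alpha_i-1}\,\d u_i=\Gamma(\alpha_1)\cdots\Gamma(\alpha_n)\,t^{\sum\alpha_i}/\Gamma(1+\sum\alpha_i) $, to be used after the substitution $ u_i=s_{i-1}-s_i $ on $ \Delta_n(t) $, together with $ \Gamma(1+\tfrac n2)\ge\tfrac12\Gamma(\tfrac n2) $ and, for fixed $ \theta\in(0,\tfrac12) $, $ \inf_{n\ge1}\Gamma(1+\tfrac n2-\theta)/\Gamma(\tfrac n2)>0 $; these turn the Gamma ratios that appear into the desired form $ C^n/\Gamma(\tfrac n2) $.

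For part (a), I would bound $ (F^{t,x}_n)^2\le C^2e^{2a|y_n|}\prod_{i=1}^n\hkk{s_{i-1}-s_i}{y_{i-1}}{y_i}^2 $, peel off $ y_n,y_{n-1},\dots,y_1 $ in turn to reach $ C^n e^{2a|x|}\int_{\Delta_n(t)}\prod_{i=1}^n(s_{i-1}-s_i)^{-1/2}\,\d s_i $, then apply the Dirichlet identity with all $ \alpha_i=\tfrac12 $ together with $ t\le T $. For part (b), I would observe that $ x $ enters $ F^{t,x}_n $ only through the factor $ \hkk{t-s_1}{x}{y_1} $, so $ \chaos_n(t,x)-\chaos_n(t,x') $ is the iterated integral of the kernel obtained by replacing that factor with $ \hkk{t-s_1}{x}{y_1}-\hkk{t-s_1}{x'}{y_1} $; peeling off $ y_n,\dots,y_2 $ as before and then bounding the $ y_1 $-integral by Lemma~\ref{l.hkprop}\ref{item:hk3} with $ \theta=\theta_1/2\in(0,\tfrac12) $ leaves a Dirichlet integral whose $ u_1 $-exponent is $ \tfrac12-\theta $ and whose other exponents are $ \tfrac12 $, producing the stated factors $ |x-x'|^{\theta_1} $ and $ e^{2a|x|}\vee e^{2a|x'|} $.

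Part (c) is the step I expect to be the main obstacle, because $ t $ enters $ \chaos_n(t,x) $ both through the integration region $ \Delta_n(t) $ and through the outermost kernel $ \hkk{t-s_1}{x}{y_1} $. Taking $ t'<t $ without loss of generality, I would note that on $ \Delta_n(t)\supseteq\Delta_n(t') $ the kernel of $ \chaos_n(t,x)-\chaos_n(t',x) $ is $ \big(\hkk{t-s_1}{x}{y_1}-\ind_{\{s_1<t'\}}\hkk{t'-s_1}{x}{y_1}\big)\big(\prod_{i=2}^n\hkk{s_{i-1}-s_i}{y_{i-1}}{y_i}\big)g(s_n,y_n) $, which I would split as $ K_1+K_2 $, where $ K_1 $ is supported on $ \{s_1<t'\} $ with first factor $ \hkk{t-s_1}{x}{y_1}-\hkk{t'-s_1}{x}{y_1} $, and $ K_2 $ is supported on $ \{t'<s_1<t\} $ with first factor $ \hkk{t-s_1}{x}{y_1} $. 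Since $ K_1 $ and $ K_2 $ have disjoint supports in $ (s_1,\dots,s_n) $, the isometry gives $ \E[(\chaos_n(t,x)-\chaos_n(t',x))^2]=\normL{K_1}^2+\normL{K_2}^2 $, with $ \normL{\cdot} $ here the $ \Lsp^2 $ norm over $ \Delta_n(t)\times\R^n $. For $ \normL{K_1}^2 $ I would peel off $ y_n,\dots,y_2 $, bound the $ y_1 $-integral by Lemma~\ref{l.hkprop}\ref{item:hk4} with $ \theta=\theta_2\in(0,\tfrac12) $ (giving $ C|t-t'|^{\theta_2}(t'-s_1)^{-1/2-\theta}e^{2a|x|} $), and run the same Dirichlet step as in part (b), obtaining $ \normL{K_1}^2\le\tfrac{C^n}{\Gamma(n/2)}e^{2a|x|}|t-t'|^{\theta_2} $. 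For $ \normL{K_2}^2 $ I would peel off all of $ y_n,\dots,y_1 $; the inner $ (n-1) $-fold integral over $ \Delta_{n-1}(s_1) $ contributes $ \le C^n/\Gamma(\tfrac n2) $ by the part-(a) computation (using $ s_1\le T $), leaving $ \int_{t'}^t\int_\R\hkk{t-s_1}{x}{y_1}^2e^{2a|y_1|}\,\d y_1\,\d s_1\le Ce^{2a|x|}\int_{t'}^t(t-s_1)^{-1/2}\,\d s_1=2Ce^{2a|x|}|t-t'|^{1/2} $ by Lemma~\ref{l.hkprop}\ref{item:hk2}, whence $ \normL{K_2}^2\le\tfrac{C^n}{\Gamma(n/2)}e^{2a|x|}|t-t'|^{1/2} $. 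Adding the two bounds and using $ |t-t'|\le T $ with $ \theta_2<\tfrac12 $ to absorb $ |t-t'|^{1/2} $ into $ C|t-t'|^{\theta_2} $ finishes the proof. Beyond this decomposition and the uniform-in-$ n $ control of the Gamma ratios, everything is the routine peeling-plus-Dirichlet loop set up in the first paragraph.
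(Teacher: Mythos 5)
Your proof is correct and follows essentially the same route as the paper's: reduce each second moment to a deterministic integral via the Wiener--It\^o isometry, peel off the spatial variables with Lemma~\ref{l.hkprop}, split the time difference in part (c) into a $\{s_1<t'\}$ piece and a $\{t'<s_1<t\}$ piece (which is precisely the paper's decomposition \eqref{e.chaos.iterate.t}), and control the residual time integral by a Gamma-function identity. The only cosmetic difference is bookkeeping: the paper applies the one-step isometry $\chaos_n=\int\hk\,\chaos_{n-1}\,\xi$ and iterates a functional inequality $F_n(t)\le C\int_0^t(t-s)^{-1/2}F_{n-1}(s)\,\d s$, whereas you apply the full $n$-fold isometry at once and invoke the Dirichlet/Liouville formula on $\Delta_n(t)$ --- algebraically the same computation.
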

\begin{proof}
Fix $ a>a_* $, $ \theta_1 \in (0, 1)$, $\theta_2 \in (0, \frac{1}{2})$, and $ n \in \Zp $.
Throughout this proof we write $ C=C(T,\ini,a,\theta_1,\theta_2) $.

\ref{item:chaosmoment onept}\ %
We begin by developing an iterative bound.
It is readily verified from \eqref{e.chaos} that the chaos can be expressed as 
\begin{align}
	\label{e.chaos.ierative}
	\chaos_n (t, x) = \int_0^t \int_\R \hkk{t-s}{x}{y} \chaos_{n-1} (s, y) \xi(s, y) \d s\d y.
\end{align}
Applying It\^{o}'s isometry gives 
$	
	\E [\chaos_n (t, x)^2] = \int_0^t \int_{\R} \hkk{t-s}{x}{y}^2 \E [\chaos_{n-1} (s, y)^2 ] \d s\d y.
$
To streamline notation, set $ F_n (s) := \sup_{x \in \R}e^{-2a|x|} \E [\chaos_n(s, x)^2 ] $.
The last integral is bounded by
$	
	\int_0^t F_{n-1}(s) \int \hkk{t-s}{x}{y}^2 e^{2a|y|} \d y. 
$
Further using Lemma~\ref{l.hkprop}~\ref{item:hk2} to bound the last integral gives 
$
	\E [\chaos_n(t, x)^2] \leq 
	C \int_0^t (t-s)^{-\frac{1}{2}} e^{2a|x|} F_{n-1} (s) \d s.
$
Multiplying both sides by $\exp(-2a|x|)$ and taking the supremum over $x$ give
\begin{equation}\label{e.iterate}
	F_n (t) \leq C \int_0^t (t-s)^{-\frac{1}{2}} F_{n-1}(s)\d s.
\end{equation}
To utilize the iterative bound \eqref{e.iterate}, we need to establish a bound on $ F_0(t) $.
By definition 
\begin{align*}
F_0 (t) := \sup_{x\in\R} \Big\{ e^{-2a |x|} \Big( \int \hkk{t}{x}{y} \ini(y)\d y  \Big)^2 \Big\}.
\end{align*}
Note that $ \ini \in \Csp_{a_*^+}(\R) $ implies $ |\ini (y)| \leq C e^{a|y|}.$
Insert this bound into the definition of $ F_0(t) $,
and use Lemma~\ref{l.hkprop}~\ref{item:hk1} to bound the resulting integral (over $ y $).
The result gives $ |F_0 (t)| \leq C $.
Iterating \eqref{e.iterate} from $ n=1 $ and using $ |F_0 (t)| \leq C $ give
$ F_n (t) \leq C^n (\Gamma(n/2))^{-1} t^n$, which concludes the desired result.

\medskip

\ref{item:chaosmoment.x}\ %
Set $ x=x $ and $ x=x' $ in \eqref{e.chaos.ierative}, 
take the difference of the result,
and Apply It\^{o}'s isometry.
We have
\begin{equation}
	\label{e.chaos.iterate.x}
	\E \big[ \big( \chaos_n(t, x) - \chaos_n(t, x') \big)^2\big]
	= 
	\int_0^t \int_\R \big(\hkk{t-s}{x}{y} - \hkk{t-s}{x'}{y}\big)^2 \E \big[\chaos_{n-1}(s,y)^2\big]  \d s \d y.
\end{equation}
Use Part~\ref{item:chaosmoment onept} to bound $ \E[\chaos_{n-1}(t,x)^2] $,
and apply Lemma~\ref{l.hkprop}~\ref{item:hk3} to bound the resulting integral.
Doing so produces the desired result.

\ref{item:chaosmoment.t}\ %
Assume without loss of generality $t> t'$. 
Set $ t=t $ and $ t=t' $ in \eqref{e.chaos.ierative}, take the difference,
and apply It\^{o}'s isometry to the result.
We have
\begin{align}
	\label{e.chaos.iterate.t}
\begin{split}
	\E\big[\big(\chaos_n(t, x) - \chaos_n(t', x)\big)^2\big]
	&= 
	\int_0^{t'}  \int_\R 
	\big(\hkk{t - s}{x}{y} - \hkk{t' - s}{x}{y} \big)^2 \E\big[\chaos_{n-1}(s, y)^2\big]\d s \d y\\ 
	&\quad 
	+ 
	\int_{t'}^{t} \int_\R
	\hkk{t - s}{x}{y} \E\big[\chaos_{n-1}(s, y)^2\big] \d s\d y.
\end{split}
\end{align}
On the right hand side,
use Part~\ref{item:chaosmoment onept} to bound $ \E[\chaos_{n-1}(s,y)^2] $,
apply Lemma~\ref{l.hkprop}~\ref{item:hk4}
and Lemma~\ref{l.hkprop}~\ref{item:hk2} to bound the resulting integrals, respectively.
Doing so produces the desired result.
\end{proof}

Based on Lemmas~\ref{l.hyperc} and Proposition~\ref{p.chaos.2ndmom}, 
we now derive some pointwise H\"{o}lder bounds on $ \chaos_{n} $.
\begin{cor}\label{c.tail}
Fix $ a\in(a_*,\infty) $, $ \alpha\in(0, \frac{1}{4}) $, and $\beta \in (0, \frac{1}{2})$. 
There exists $C = C(T,a,\alpha,\beta)$ such that for all $ n\in\Zp $, $ r \geq 0 $, $ t,t'\in[0,T] $, and $ x,x'\in\R $,
\begin{enumerate}[leftmargin=20pt, label=(\alph*)]
\item \label{enu.tailx}\
$ 
	\P\Big[\,
		|\chaos_n (t, x)  - \chaos_n (t, x')| \geq |x-x'|^\beta ( e^{a|x|} \vee e^{a|x'|}) r  
	\Big] 
	\leq 
	\exp\big(- \tfrac{1}{C} n^{\frac{3}{2}} r^\frac{2}{n} + n \big)
$, and
\item[]
\item \label{enu.tailt}\
$ 
	\displaystyle 	
	\P\Big[\,
		|\chaos_n (t', x)  - \chaos_n (t, x)| \geq e^{a|x|} |t-t'|^\alpha r  
	\Big] 
	\leq 
	\exp\big(-\tfrac{1}{C} n^{\frac{3}{2}} r^\frac{2}{n} + n \big)
$.
\end{enumerate}
\end{cor}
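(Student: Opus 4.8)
The plan is to combine the hypercontractive tail bound of Lemma~\ref{l.hyperc} with the second-moment H\"older estimates of Proposition~\ref{p.chaos.2ndmom}. First I would fix $a\in(a_*,\infty)$, $\alpha\in(0,\frac14)$, and $\beta\in(0,\frac12)$, and then choose the exponents $\theta_1,\theta_2$ in Proposition~\ref{p.chaos.2ndmom} to accommodate the prescribed H\"older exponents: take $\theta_1=2\beta\in(0,1)$ for part \ref{enu.tailx} and $\theta_2=2\alpha\in(0,\frac12)$ for part \ref{enu.tailt}. Both of these lie in the allowed ranges, so Proposition~\ref{p.chaos.2ndmom} applies and gives, for fixed $(t,x)$ and $(t',x')$, that the random variable
$$
	X_x := \chaos_n(t,x) - \chaos_n(t,x')
$$
lies in the $n$-th $\R$-valued Wiener chaos (being a difference of two elements of that chaos) with second moment bounded by
$$
	\sigma^2 := \E[X_x^2] \leq \tfrac{C^n}{\Gamma(n/2)}\,(e^{2a|x|}\vee e^{2a|x'|})\,|x-x'|^{2\beta},
$$
and similarly for the time increment $X_t := \chaos_n(t',x) - \chaos_n(t,x)$ with $\sigma^2 \leq \tfrac{C^n}{\Gamma(n/2)} e^{2a|x|} |t-t'|^{2\alpha}$.

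Next I would feed this into Lemma~\ref{l.hyperc}. For part \ref{enu.tailx}, applying Lemma~\ref{l.hyperc} to $X_x$ with threshold $|x-x'|^\beta(e^{a|x|}\vee e^{a|x'|})\,r$ gives
$$
	\P\big[\,|X_x|\geq |x-x'|^\beta(e^{a|x|}\vee e^{a|x'|})\,r\,\big]
	\leq
	\exp\!\Big(-\tfrac{n}{C}\,\sigma^{-2/n}\big(|x-x'|^\beta(e^{a|x|}\vee e^{a|x'|})\,r\big)^{2/n} + n\Big).
$$
Here the key algebraic point is that the deterministic prefactor of $r$ has been matched to $\sigma$: since $\sigma^{2/n} \leq C\,\Gamma(n/2)^{-1/n}(e^{2a|x|}\vee e^{2a|x'|})^{1/n}|x-x'|^{2\beta/n}$, the factors $(e^{a|x|}\vee e^{a|x'|})^{2/n}$ and $|x-x'|^{2\beta/n}$ cancel between $\sigma^{-2/n}$ and the threshold, leaving
$$
	\sigma^{-2/n}\big(|x-x'|^\beta(e^{a|x|}\vee e^{a|x'|})\,r\big)^{2/n} \geq \tfrac{1}{C}\,\Gamma(n/2)^{1/n}\,r^{2/n}.
$$
By Stirling, $\Gamma(n/2)^{1/n} \geq \tfrac{1}{C} n^{1/2}$ for all $n\in\Zp$ (absorbing small $n$ into the constant), so $\tfrac{n}{C}\sigma^{-2/n}(\cdots)^{2/n} \geq \tfrac{1}{C} n^{3/2} r^{2/n}$, which yields the claimed bound after renaming constants. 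Part \ref{enu.tailt} is identical with $|x-x'|^\beta$ replaced by $|t-t'|^\alpha$, $(e^{a|x|}\vee e^{a|x'|})$ replaced by $e^{a|x|}$, and $\theta_1$ replaced by $\theta_2=2\alpha$.

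The only mild subtlety I anticipate, and the step I would be most careful about, is the uniformity of the Stirling lower bound $\Gamma(n/2)^{1/n}\gtrsim n^{1/2}$ down to $n=1$: for small $n$ the ratio $\Gamma(n/2)^{1/n}/n^{1/2}$ is bounded below by a positive constant by direct inspection (there are only finitely many cases before the asymptotic regime kicks in), so it can be folded into $C$. Everything else is a direct substitution, and no new estimates are needed beyond those already established.
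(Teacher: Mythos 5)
Your proposal is correct and follows essentially the same route as the paper: apply Lemma~\ref{l.hyperc} to the chaos increments, using Proposition~\ref{p.chaos.2ndmom} with the choices $\theta_1=2\beta$ and $\theta_2=2\alpha$ (which lie in the required ranges) to control the variance, and $\Gamma(n/2)^{-1/n}\leq Cn^{-1/2}$ to produce the $n^{3/2}$ factor. The paper streamlines the bookkeeping by first normalizing the increments into random variables $U$ and $V$ (dividing by the $|x-x'|^{\beta}$ or $|t-t'|^{\alpha}$ factors and the $e^{-a|x|}\wedge e^{-a|x'|}$ weight) so that the threshold in Lemma~\ref{l.hyperc} is simply $r$ and the variance is $\leq C^n/\Gamma(n/2)$; your version applies the lemma directly to the unnormalized increments and carries out the same cancellation explicitly, which is algebraically identical.
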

\begin{proof}
Set $ U := (e^{-a|x|}\wedge e^{-a|x'|}) \frac{ \chaos_n (t,x)-\chaos_n (t,x') }{ |x-x'|^{\beta} } $,
$ V := (e^{-a|x|}\wedge e^{-a|x'|}) \frac{ \chaos_n (t,x)-\chaos_n (t',x) }{ |t-t'|^{\alpha} } $,
$ \sigma^2 := \E[U^2] $, and $ {\eta}^2 := \E[{V}^2] $.
Proposition~\ref{p.chaos.2ndmom}~\ref{item:chaosmoment.x} and \ref{item:chaosmoment.t} give
$ \sigma^2 \leq C^n/\Gamma(\frac{n}{2}) $ and $ {\eta}^2 \leq C^n/\Gamma(\frac{n}{2}) $.
Taking $ \frac{1}{n} $ power on both sides and using $ \Gamma(\frac{n}{2})^{-1/n} \leq C n^{-1/2} $,
we have $ \sigma^{\frac{2}{n}} \leq C n^{-1/2} $ and $ {\eta}^{\frac{2}{n}} \leq C n^{-1/2} $.
Next, since $\chaos_n (t, x) $, $ \chaos_n(t,x') $, $ \chaos_n(t',x) $, and $ \chaos_n(t',x') $ 
belong to the $ n $-th $ \R $-valued Wiener chaos, $ U $ and $ V $ also belong to the $n$-th Wiener chaos.
The desired results now follow from Lemma~\ref{l.hyperc}.
\end{proof}


Our next step is to leverage the pointwise bounds in Corollary~\ref{c.tail} to a functional bound.
To this end it is convenient to first work with H\"{o}lder seminorms. 
For $ f\in\Csp(\rt) $ and $ k\in\Z $, set
\begin{align}
	\label{e.snormHolder.}
	\snormH{f}{a,\alpha,\beta,k}
	&:= 
	e^{-a|k|}
	\sup
	\bigg\{ 
		\frac{ |f(t_1, x_1) - f(t_2, x_2)| }{ |t_1 - t_2|^\alpha + |x_1 - x_2|^\beta }
		\,:\,
		(t_1, x_1)\neq (t_2, x_2) \in [0,T]\times[k,k+1]
	\bigg\}.
\end{align}
This quantity measures the H\"{o}lder continuity of $ f $ on $ [0,T]\times[k,k+1] $.

\begin{prop}
\label{p.snormH.bd}
Fix $ a\in(a_*,\infty) $, $ \alpha\in(0,\frac14) $, and $ \beta\in(0,\frac12) $.
There exists $ C=C(T,a,\alpha,\beta) $ such that, for all $ r \geq (Cn^{-\frac12})^\frac{n}{2} $, $ n\in\Zp $, and $ k\in\Z $,
\begin{align*}
	\P\big[ \, \snormH{\chaos_{n}}{a,\alpha,\beta,k} \geq r \big]
	\leq
	C\,\exp\big( -\tfrac{1}{C} n^{\frac32} r^{\frac{2}{n}} \big). 	
\end{align*}
\end{prop}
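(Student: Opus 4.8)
The plan is to upgrade the pointwise H\"older tail bounds of Corollary~\ref{c.tail} to a bound on the H\"older seminorm via a chaining (Kolmogorov--Chentsov-type) argument, being careful to track the dependence on $ n $ so that the final exponent $ n^{3/2} r^{2/n} $ survives. First I would fix $ k\in\Z $ and work on the fixed compact rectangle $ R_k := [0,T]\times[k,k+1] $; since the weight $ e^{-a|k|} $ and the factors $ e^{a|x|}\vee e^{a|x'|} $ in Corollary~\ref{c.tail} are comparable (up to a constant depending on $ T,a $) for $ (t,x),(t',x')\in R_k $, I may as well absorb them into the constant $ C $ and prove the bound for the unweighted seminorm on $ R_k $, then reinstate $ e^{-a|k|} $ at the end.

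**Key steps.**
(1) \emph{Dyadic discretization.} For $ m\ge 0 $ introduce the dyadic grid $ D_m := \{(j 2^{-m}T,\ k+\ell 2^{-m}) : 0\le j\le 2^m,\ 0\le \ell\le 2^m\}\subset R_k $, which has $ \lesssim 4^m $ points and $ \lesssim 4^m $ ``neighbour'' pairs at scale $ 2^{-m} $. (2) \emph{Union bound at each scale.} For a neighbouring pair $ (p,q) $ at scale $ 2^{-m} $ one has $ |t_p-t_q|\le 2^{-m}T $ and $ |x_p-x_q|\le 2^{-m} $, so Corollary~\ref{c.tail}\ref{enu.tailx}--\ref{enu.tailt} give, for any $ u\ge 0 $,
\begin{align*}
	\P\big[ |\chaos_n(p)-\chaos_n(q)| \ge 2^{-m\beta'} u \big] \le C\exp\big(-\tfrac1C n^{3/2} u^{2/n} \big),
\end{align*}
where $ \beta' := \beta\wedge(\alpha\cdot\tfrac{\beta}{\alpha}) $ — more simply, pick $ \alpha'>\alpha $, $ \beta'>\beta $ in the admissible ranges and estimate $ |t_p-t_q|^{\alpha'}+|x_p-x_q|^{\beta'}\lesssim 2^{-m(\alpha'\wedge\beta')} $; I keep $ \alpha,\beta $ for the seminorm and use a slightly larger pair of H\"older exponents in the chaining so that the geometric series in $ m $ below converges. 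Taking a union over the $ \lesssim 4^m $ pairs, the event that some scale-$ 2^{-m} $ increment exceeds $ 2^{-m\beta'} u_m $ has probability $ \le C 4^m \exp(-\tfrac1C n^{3/2} u_m^{2/n}) $. (3) \emph{Choosing the levels and summing the chain.} Set $ u_m := 2^{\delta m} r_0 $ for a small $ \delta>0 $ with $ \delta<\beta'-\beta $; then the standard chaining bound gives that, off an event of probability $ \le C\sum_m 4^m \exp(-\tfrac1C n^{3/2}(2^{\delta m}r_0)^{2/n}) $, every pair $ (t_1,x_1),(t_2,x_2)\in R_k $ with $ (t_1,x_1)\ne(t_2,x_2) $ satisfies
\begin{align*}
	|\chaos_n(t_1,x_1)-\chaos_n(t_2,x_2)| \le C\Big(\sum_{m\ge 0} 2^{-m(\beta'-\beta-\delta)}\Big)\, r_0\,\big(|t_1-t_2|^\alpha+|x_1-x_2|^\beta\big),
\end{align*}
i.e.\ $ \snormH{\chaos_n}{a,\alpha,\beta,k}\le C r_0 $ (after reinserting $ e^{-a|k|} $), where I also used continuity of $ \chaos_n $ (Proposition~\ref{p.normSup.bd}, or rather its proof) to pass from dyadic points to all points. (4) \emph{Bookkeeping the constants.} Identifying $ r = C r_0 $ and bounding the sum $ \sum_m 4^m\exp(-\tfrac1C n^{3/2}(2^{\delta m}r)^{2/n}) $: since $ r\ge (Cn^{-1/2})^{n/2} $ guarantees $ n^{3/2}r^{2/n}\ge n\cdot(\text{const}) $, the $ m=0 $ term already dominates up to a geometric factor — the ratio of the $ (m+1) $-st to the $ m $-th summand is $ 4\exp(-\tfrac1C n^{3/2}r^{2/n}(2^{(m+1)\delta\cdot 2/n}-2^{m\delta\cdot 2/n})) $, and because $ n^{3/2}r^{2/n}\gtrsim n $ and $ 2^{2\delta/n}-1\gtrsim 1/n $, this ratio is $ \le 4e^{-c} $ for a fixed $ c $, so the series is summable with the claimed bound $ C\exp(-\tfrac1C n^{3/2}r^{2/n}) $.

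**Main obstacle.**
The delicate point — and the reason the hypothesis $ r\ge (Cn^{-1/2})^{n/2} $ is imposed — is the $ n $-dependence in the chaining sum. Unlike the classical Gaussian case, here the tail $ \exp(-\tfrac1C n^{3/2} r^{2/n}) $ has an exponent $ r^{2/n} $ that is a \emph{slowly varying} function of $ r $, so the sum over dyadic scales $ \sum_m 4^m \exp(-\tfrac1C n^{3/2} 2^{2\delta m/n} r^{2/n}) $ is geometric only when the base-$ r $ term is already large enough to beat the entropy factor $ 4^m = e^{m\log 4} $; quantitatively one needs $ \tfrac1C n^{3/2}r^{2/n}(2^{2\delta/n}-1)\ge \log 4 + c $, and since $ 2^{2\delta/n}-1\sim (2\delta\log 2)/n $ this is exactly the condition $ n^{3/2}r^{2/n}/n = n^{1/2}r^{2/n}\gtrsim 1 $, equivalently $ r\gtrsim n^{-n/4} $, which is implied by $ r\ge(Cn^{-1/2})^{n/2} $. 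I would therefore devote the bulk of the write-up to making this summation estimate precise and uniform in $ n $; the rest (the discretization, the union bounds at each scale, the passage from dyadic points to $ R_k $ by continuity) is routine.
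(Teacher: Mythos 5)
Your overall strategy is the same as the paper's: a Kolmogorov--Chentsov chaining argument on $R_k=[0,T]\times[k,k+1]$, with the pointwise tails from Corollary~\ref{c.tail} applied at slightly bumped-up H\"older exponents, and a careful summation over scales to track the $n$-dependence. You have correctly identified the $n$-bookkeeping issue: since the tail is $\exp(-\tfrac1C n^{3/2}r^{2/n})$ with a slowly varying exponent, the geometric ratio between consecutive scales behaves like $4\exp\big(-\tfrac1C n^{3/2}r^{2/n}(2^{2\delta/n}-1)\big)$, and because $2^{2\delta/n}-1\sim n^{-1}$ one needs $n^{1/2}r^{2/n}\gtrsim 1$, i.e.\ $r\gtrsim (Cn^{-1/2})^{n/2}$. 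This is exactly the computation the paper carries out.

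However, there is a genuine gap in Step (1)--(3): the \emph{isotropic} dyadic grid $D_m$ does not handle the anisotropy of the H\"older exponents, and the claimed final bound by $C r_0\big(|t_1-t_2|^\alpha+|x_1-x_2|^\beta\big)$ does not follow. The exponents are constrained by Corollary~\ref{c.tail} to $\alpha,\alpha'<\tfrac14$ in time and $\beta,\beta'<\tfrac12$ in space, so generically $\alpha'<\beta$. With the isotropic grid the chaining sum from the starting scale $m_0$ is of order $(2^{-m_0})^{\alpha'-\delta}$ (the \emph{time} exponent dominates, since it decays slower). But the starting scale $m_0$ for a pair with $|t_1-t_2|$ tiny and $|x_1-x_2|\approx 2^{-m_0}$ is determined by the \emph{space} displacement, and then the denominator is $|t_1-t_2|^\alpha+|x_1-x_2|^\beta\approx (2^{-m_0})^{\beta}$. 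Their ratio is $(2^{-m_0})^{\alpha'-\delta-\beta}\to\infty$ as $m_0\to\infty$ whenever $\alpha'-\delta<\beta$, which is unavoidable when $\beta\ge\tfrac14$. Concretely, your displayed bound with the geometric factor $\sum_m 2^{-m(\beta'-\beta-\delta)}$ is not what the chain actually produces once the time edges are accounted for; the time contribution dominates and destroys the estimate in this regime. The paper circumvents exactly this by using a non-dyadic, \emph{anisotropic} nested partition with side lengths $\tside_\ell,\xside_\ell$ chosen so that $\tfrac12\le\tside_\ell^\alpha/\xside_\ell^\beta\le 2$ (condition~\eqref{e.aspect.ratio}). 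This aspect-ratio condition forces the time and space contributions to be comparable at every scale, so that $\tside_{\ell_*}^\alpha+\xside_{\ell_*}^\beta\le 3\big(|t_1-t_2|^\alpha+|x_1-x_2|^\beta\big)$ regardless of whether $\ell_*$ was triggered by the time or the space displacement. If you replace your $D_m$ by such an anisotropic partition (equivalently, partition time and space at different geometric rates so that $(\text{time step})^\alpha\asymp(\text{space step})^\beta$), the rest of your argument --- the union bound at each level, the $\delta$-bump to gain $\tside_\ell^{-\delta},\xside_\ell^{-\delta}\ge e^{\ell/C}$, and the summation with the $n$-bookkeeping --- goes through as you describe and matches the paper's proof.
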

\begin{proof}
Throughout this proof we write $ C=C(T,a_*,a,\alpha,\beta) $.

The proof follows similar argument in the proof of Kolmogorov's continuity theorem.
The starting point is an inductive partition of $ [0,T]\times [k,k+1] $ into nested rectangles.
Let $ \tside_0 := T $ and $ \xside_0:=1 $ denote the side lengths of $ R^{(0)}_{11} := [0,T]\times [k,k+1] $. 
We proceed by induction in $ \ell=0,1,2,\ldots $.
Assume, for $ \ell \geq 0 $, we have obtained the rectangles $ R^{(\ell)}_{ij} $, for $ i=1, \ldots, \prod_{\ell'=1}^{\ell-1} m_{\ell'} $ and $ j=1, \ldots, \prod_{\ell'=1}^{\ell-1} n_{\ell'} $.
We partition each $ R^{(\ell)}_{ij} $ into $ m_{\ell} \times n_{\ell} $ rectangles of equal size.
The side lengths of the resulting rectangles are therefore $ \tside_{\ell+1} = \tside_{\ell}/m_\ell $ and $ \xside_{\ell+1} = \xside_{\ell}/n_\ell $.
The numbers $ m_\ell $ and $ n_\ell $ are chosen in such a way that 
\begin{align}
	\label{e.aspect.ratio}
	& \tfrac12 \leq \tside_\ell^\alpha/\xside_\ell^\beta \leq 2,
	\quad
	\text{for } \ell = 1,2,\ldots,
\\
	\label{e.mn.bdd}
	& 2 \leq m_\ell, n_\ell \leq C,
	\quad
	\text{for } \ell = 0,1,2,\ldots.
\end{align}
Let $ \Vertices_\ell := \{ (i\tside_\ell,k+j\xside_\ell) : i=1, \ldots, \prod_{\ell'=1}^{\ell-1} m_{\ell'}, j=1, \ldots, \prod_{\ell'=1}^{\ell-1} n_{\ell'} \} $
denote the set of the vertices at the $ \ell $-th level,
and let $ \Edges_\ell $ denote the corresponding set of edges.

For $ (t_1,x_1)\neq (t_2,x_2) \in [0,T]\times[k,k+1] $, 
let 
\begin{align}
	\label{e.ell*}
	\ell_* = \ell_*(t_1,x_1,t_2,x_2) := \min \{ \ell\in\Zn \, : |t_1-t_2| \geq \tside_\ell \text{ or } |x_1-x_2| \geq \xside_\ell  \}.
\end{align}
It is standard to show that, for any $ f\in \Csp(\rt) $,
\begin{align}
	\label{e.retangle.f}
	|f(t_1,x_1)-f(t_2,x_2)|
	\leq
	C \sum_{\ell\geq\ell_*} \max_{\edge\in\Edges_\ell} |f(\partial\edge )|.
\end{align}
Here $ |f(\partial\edge )| := |f(s_1,y_1)-f(s_2,y_2)| $, where $ (s_1,y_1) $ and $ (s_2,y_2) $ are the two ends of the edge $ \edge\in\Edges_\ell $.

Below we will apply \eqref{e.retangle.f} for $ f=e^{-a|k|}\chaos_{n} $.
To prepare for this application let us first derive a bound on
\begin{align}
	\label{e.edge.union.bd}
	\sum_{\ell_0 \geq 0}
	\P\Big[\, \sum_{\ell \geq \ell_0} \max_{\edge\in\Edges_\ell} e^{-a|k|} |\chaos_{n}(\partial\edge)| \geq (\tside_{\ell_0}^\alpha+ \xside_{\ell_0}^\beta) r \Big].
\end{align}
Set $ \delta := (\frac{1}{2} (\frac14-\alpha)) \wedge (\frac12(\frac12-\beta)) $.
Fix any edge $ \edge\in\Edges_\ell $.
If $ \edge $ is in the $ t $ direction,
apply Corollary~\ref{c.tail}\ref{enu.tailt} with 
$ \{(t,x),(t',x)\} = \partial\edge $, $ \alpha\mapsto \alpha+\delta $, and $ r\mapsto \tside_\ell^{-\delta} r $.
If $ \edge $ is in the $ x $ direction,
apply Corollary~\ref{c.tail}\ref{enu.tailx} with 
$ \{(t,x),(t,x')\} = \partial\edge $, $ \beta\mapsto \beta+\delta $, and $ r\mapsto \xside_\ell^{-\delta} r $.
The result gives
\begin{align}
	\label{e.edge.t}
	\P\big[ e^{-a|k|-|a|} |\chaos_{n}(\partial\edge)| \geq \tside_\ell^\alpha r \big]
	&\leq
	\exp\big( -\tfrac1C n^\frac32 \tside^{-\delta}_\ell r^{\frac{2}{n}} + n \big),
	\quad
	\text{if } \edge \text{ is in the } t \text{ direction,}
\\
	\label{e.edge.x}
	\P\big[ e^{-a|k|-|a|} |\chaos_{n}(\partial\edge)| \geq \xside_\ell^\beta r \big]
	&\leq
	\exp\big( -\tfrac1C n^\frac32 \xside^{-\delta}_\ell r^{\frac{2}{n}} + n \big),
	\quad
	\text{if } \edge \text{ is in the } x \text{ direction.}
\end{align}
On the right hand sides of \eqref{e.edge.t}--\eqref{e.edge.x},
use $ m_\ell,n_\ell \geq 2 $ to bound $ \tside_\ell^{-\delta} \geq e^{\frac{\ell}{C}} $ and $ \xside_\ell^{-\delta} \geq e^{-\frac{\ell}{C}} $.
Take the union bound of the result over $ \edge\in\Edges_\ell $.
The condition $ m_\ell,n_\ell \leq C $ gives $ |\Edges_\ell| \leq C^\ell $.
Hence
\begin{align}
	\label{e.edge.tx}
	\P\Big[\, \max_{\edge\in\Edges_\ell} e^{-a|k|} |\chaos_{n}(\partial\edge)| \geq e^{|a|} (\tside_\ell^\alpha+ \xside_\ell^\beta) r \Big]
	\leq
	C^\ell \exp\big( - \tfrac1C e^{\frac{\ell}{C n}} n^\frac32 r^{\frac{2}{n}} +n  \big).
\end{align}
Next, the condition $ m_\ell,n_\ell \geq 2 $ 
implies $ \tside_\ell \leq \tside_{\ell_0} 2^{-\ell+\ell_0} $ and $ \xside_\ell \leq \xside_{\ell_0} 2^{-\ell+\ell_0}  $,
and therefore
$
	\sum_{\ell \geq \ell_0} (\tside_\ell^\alpha+ \xside_\ell^\beta)r
	\leq
	C (\tside_{\ell_0}^\alpha+ \xside_{\ell_0}^\beta)r.
$
Use this inequality to take the union bound of \eqref{e.edge.tx} over $ \ell \geq \ell_0 $ and absorb $ e^{|a|} $ into $ C $.
We have
\begin{align*}
	\P\Big[ 
		\sum_{\ell \geq \ell_0} \max_{\edge\in\Edges_\ell} e^{-a|k|} |\chaos_{n}(\partial\edge)| 
		\geq 
		(\tside_{\ell_0}^\alpha+\xside_{\ell_0}^\beta) Cr 
	\Big]
	\leq
	\sum_{\ell \geq \ell_0}C^\ell \exp\big( -\tfrac1C e^{\frac{\ell}{Cn}} n^\frac32 r^{\frac{2}{n}} + n \big).
\end{align*}
Use $ e^{\frac{\ell}{Cn}} \geq 1 + \frac{\ell}{Cn} $ on the right hand side,
sum both sides over $ \ell_0 \in \Zn $,
and rename $ Cr\mapsto r $.
Doing so gives\\
$
	\eqref{e.edge.union.bd}	
	\leq
	\exp( -\tfrac1Cn^\frac32 r^{\frac{2}{n}} )
	\sum_{\ell_0\geq 0} \sum_{\ell\geq\ell_0} \exp( -\tfrac{\ell}{C} n^{\frac12} r^{\frac{2}{n}} + n + \ell C).
$
For all $ r \geq (C_0n^{-\frac12})^{\frac{n}2} $ and $ C_0 $ sufficiently large,
the last double sum is convergent and bounded.
Hence
\begin{align}
	\label{e.edge.union}
	\eqref{e.edge.union.bd}
	\leq
	C \exp\big( -\tfrac1Cn^\frac32 r^{\frac{2}{n}} \big),
	\quad
	\text{for all } r \geq (Cn^{-\frac12})^{\frac{n}2}.
\end{align}

Now, set $  f= e^{-a|k|} \chaos_{n} $ in \eqref{e.retangle.f} and use \eqref{e.edge.union}.
We have that, for any $ r \geq (Cn^{-\frac12})^{\frac{n}{2}} $,
\begin{align}
	\label{e.edge.union.}
	e^{-a|k|} |\chaos_{n}(t_1,x_1)-\chaos_{n}(t_2,x_2)|
	\leq 
	C\,(\tside_{\ell_*}^\alpha+\xside_{\ell_*}^\beta) r,
	\quad
	\forall (t_1,x_1),(t_2,x_2)\in [0,T]\times[k,k+1]
\end{align}
holds with probability $ \geq 1 - C\, \exp( -\frac1C n^\frac32 r^{\frac{2}{n}} ) $.
Referring to the definition of $ \ell_* $ in \eqref{e.ell*},
we see that either $ |t_1-t_2| \geq \tside_{\ell_*} $ or $ |x_1-x_2| \geq \xside_{\ell_*} $ holds.
Combining this fact with the condition~\eqref{e.aspect.ratio} gives
$
	\frac{ \tside_{\ell_*}^\alpha+\xside_{\ell_*}^\beta }{ |t_1-t_2|^\alpha+|x_1-x_2|^\beta }
	\leq
	3.
$
Divide both sides of \eqref{e.edge.union.} by $ |t_1-t_2|^\alpha+|x_1-x_2|^\beta $,
use the last inequality on the right hand side,
take supremum of over $ (t_1,x_1)\neq (t_2,x_2)\in[0,T]\times[k,k+1] $ in the result,
and rename $ 3 C r \mapsto r $.
Doing so concludes the desired result.
\end{proof}

We now state and prove a bound on $ \P[ \,\normSup{\chaos_{n}}{a} \geq r ] $.

\begin{prop}
\label{p.normSup.bd}
Fix $ a>a_* $.
There exists $ C=C(T,a) $ such that, for all $ r \geq (Cn^{-\frac12})^{\frac{n}{2}} $ and $ n\in\Zn $,
\begin{align*}
	\P\big[ \, \normSup{\chaos_{n}}{a} \geq r \big]
	\leq
	C\,\exp\big( -\tfrac{1}{C} n^{\frac32} r^{\frac{2}{n}}\big). 	
\end{align*}
\end{prop}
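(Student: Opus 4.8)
The plan is to reduce the global weighted supremum $ \normSup{\chaos_n}{a} $ over $ \rt $ to the strip-wise H\"older seminorms $ \snormH{\chaos_n}{a',\alpha,\beta,k} $ controlled by Proposition~\ref{p.snormH.bd}. The one point requiring care is that a naive union bound over the unit strips $ [k,k+1] $, $ k\in\Z $, would diverge, since the bound in Proposition~\ref{p.snormH.bd} does not depend on $ k $; to recover summability I would work on each strip with a \emph{strictly smaller} weight $ a'\in(a_*,a) $, which is legitimate because $ \chaos_n\in\Csp_{a'}(\rt) $ satisfies an estimate of exactly the same shape, and which produces a decay factor $ e^{-(a-a')|k|} $ in $ k $.

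First, fix $ a'\in(a_*,a) $ and $ \alpha\in(0,\tfrac14) $, $ \beta\in(0,\tfrac12) $. The case $ n=0 $ is trivial because $ \chaos_0 $ is deterministic with $ \normSup{\chaos_0}{a}<\infty $ (use $ |\ini|\leq Ce^{a'|\Cdot|} $ and Lemma~\ref{l.hkprop}\ref{item:hk1}), so assume $ n\geq1 $. For $ n\geq1 $ the $ n $-fold stochastic integral in \eqref{e.chaos} runs over the empty simplex $ \Delta_n(0) $, hence $ \chaos_n(0,\Cdot)\equiv0 $. Thus, for $ x\in[k,k+1] $ and $ (t,x)\in\rt $, the definition \eqref{e.snormHolder.} of the seminorm gives
\begin{align*}
	e^{-a|x|}\big|\chaos_n(t,x)\big|
	=e^{-a|x|}\big|\chaos_n(t,x)-\chaos_n(0,k)\big|
	\leq (T^\alpha+1)\,e^{-a|x|}e^{a'|k|}\,\snormH{\chaos_n}{a',\alpha,\beta,k}.
\end{align*}
Since $ \big||x|-|k|\big|\leq1 $ on the strip and $ a-a'>0 $, we have $ e^{-a|x|}e^{a'|k|}\leq C_1 e^{-(a-a')|k|} $ with $ C_1=C_1(T,a,a') $, so taking the supremum over $ (t,x) $,
\begin{align}
	\label{e.pf.normSup.reduce}
	\normSup{\chaos_n}{a}
	\leq
	C_1\,\sup_{k\in\Z} e^{-(a-a')|k|}\,\snormH{\chaos_n}{a',\alpha,\beta,k}.
\end{align}

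Next I would take a union bound over $ k\in\Z $ in \eqref{e.pf.normSup.reduce} and apply Proposition~\ref{p.snormH.bd} (with weight $ a' $) to each term. For $ C $ large enough, the hypothesis $ r\geq (Cn^{-\frac12})^{\frac n2} $ guarantees $ e^{(a-a')|k|}r/C_1\geq (C'n^{-\frac12})^{\frac n2} $ for every $ k $, where $ C' $ is the constant of Proposition~\ref{p.snormH.bd}, whence
\begin{align*}
	\P\big[\normSup{\chaos_n}{a}\geq r\big]
	&\leq
	\sum_{k\in\Z}\P\big[\snormH{\chaos_n}{a',\alpha,\beta,k}\geq e^{(a-a')|k|}r/C_1\big]
	\\
	&\leq
	\sum_{k\in\Z}C\exp\big(-\tfrac1C n^{\frac32}e^{\frac{2(a-a')|k|}{n}}(r/C_1)^{\frac2n}\big).
\end{align*}
The convexity bound $ e^{2(a-a')|k|/n}\geq 1+\tfrac{2(a-a')}{n}|k| $ then splits off a geometric factor $ \exp\big(-\tfrac{2(a-a')}{C}n^{\frac12}(r/C_1)^{\frac2n}|k|\big) $ from each summand. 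Since $ r\geq (Cn^{-\frac12})^{\frac n2} $ forces $ n^{\frac12}(r/C_1)^{\frac2n} $ to be bounded below by a fixed positive constant once $ C $ is large, the sum over $ k $ is a convergent geometric series bounded uniformly in $ n $ and $ r $; absorbing $ C_1 $ (via $ (r/C_1)^{\frac2n}\geq C_1^{-2}r^{\frac2n} $) into the constant yields $ \P[\normSup{\chaos_n}{a}\geq r]\leq C\exp(-\tfrac1C n^{\frac32}r^{\frac2n}) $, as claimed. The only genuine obstacle is the one flagged at the outset — converting the $ k $-uniform bound of Proposition~\ref{p.snormH.bd} into a summable family over $ \Z $ — and it is resolved precisely by the passage to the smaller weight $ a' $; the remainder is routine tracking of constants.
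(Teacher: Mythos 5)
Your proof is correct and follows essentially the same route as the paper: reduce the global weighted sup to the strip-wise H\"older seminorms of Proposition~\ref{p.snormH.bd} (using $\chaos_n(0,\Cdot)\equiv 0$ for $n\geq 1$), pass from weight $a$ to a strictly smaller weight $a'\in(a_*,a)$ to gain the decay factor $e^{-(a-a')|k|}$, and then sum the resulting tail bounds over $k\in\Z$, using the hypothesis $r\geq(Cn^{-1/2})^{n/2}$ to keep $n^{1/2}r^{2/n}$ bounded below so the geometric series converges uniformly. The only cosmetic difference is that you bound $\normSup{\chaos_n}{a}$ by a $\sup_k$ of weighted strip seminorms rather than the paper's $\sum_k$, which if anything makes the union bound cleaner; the substance of the argument is identical.
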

\begin{proof}
Throughout this proof we write $ C=C(T,a) $.

For $ n=0 $, note that $ \chaos_{0}(t,x) = \int_{\R} \hkk{t}{x}{y} \ini(y) \, \d y $ is deterministic.
It is straightforward to check from Lemma~\ref{l.hkprop}\ref{item:hk1} and $ \ini\in\Csp_{a_*^+}(\R) $ that
$ \normSup{\chaos_{0}}{a} < \infty $.
Let $ b:=(a+a_*)/2 $.
For $ n \geq 1 $, note from \eqref{e.chaos} that $ \chaos_n(0,0) = 0 $.
Given this property,
from the definitions \eqref{e.normSup} and \eqref{e.snormHolder.} of $ \normSup{\Cdot}{a} $ and $ \snormH{\Cdot}{a,\alpha,\beta,k} $
it is straightforward to check
\begin{align*}
	\normSup{\chaos_n}{a}
	\leq
	C \sum_{k\in\Z} \snormH{\chaos_n}{a,\frac18,\frac14,k}
	\leq
	C \sum_{k\in\Z} \snormH{\chaos_n}{b,\frac18,\frac14,k} \, e^{-\frac12 (a-a_*)|k|}.
\end{align*}
Apply Proposition~\ref{p.snormH.bd} with $ r \mapsto e^{\frac12 (a-a_*)|k|} r $
and $ (a,\alpha,\beta)\mapsto(b,\frac18,\frac14) $, 
and take the union bound of the result over $ k\in\Z $.
We have
$
	\P[ \, \normSup{\chaos_n}{a} \geq C r ]
	\leq
	\sum_{k\in\Z} 
	C\,\exp( -\tfrac{1}{C} n^{\frac32} e^{\frac{|k|}{Cn}}  r^{\frac{2}{n}} ).
$
Within the last expression, use $ e^{\frac{|k|}{Cn}} \geq 1 +\frac{|k|}{Cn} $,
sum the result over $ k\in\Z $, and rename $ Cr \mapsto r $ in the result.
Doing so concludes the desired result.
\end{proof}

Proposition~\ref{p.normSup.bd} immediately implies
\begin{cor}
\label{c.normSup.bd}
Fix $ a>a_* $.
We have $ \E[ \, \normSup{\chaos_{n}}{a}^k ] < \infty $ for all $ k,n\in\Zn $,
and $ \P[ \sum_{n=0}^\infty \normSup{\chaos_{n}}{a} <\infty ] = 1 $.
\end{cor}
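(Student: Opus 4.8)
The plan is to deduce both assertions directly from the tail bound of Proposition~\ref{p.normSup.bd}, handling the deterministic term $\chaos_0$ by hand.

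For the moment bounds, I would first dispose of the degenerate cases: $k=0$ is trivial, and for $n=0$ recall from the proof of Proposition~\ref{p.normSup.bd} that $\chaos_0(t,x)=\int_\R \hkk{t}{x}{y}\ini(y)\,\d y$ is deterministic with $\normSup{\chaos_0}{a}<\infty$, so all its moments are finite. For $n\geq1$ and $k\geq1$, I would use the layer-cake formula $\E[\normSup{\chaos_n}{a}^k]=\int_0^\infty k r^{k-1}\,\P[\normSup{\chaos_n}{a}\geq r]\,\d r$, split the integral at the threshold $R_n:=(Cn^{-1/2})^{n/2}$ appearing in Proposition~\ref{p.normSup.bd}, bound the part over $[0,R_n]$ crudely by $R_n^{\,k}<\infty$, and on $[R_n,\infty)$ bound the integrand using Proposition~\ref{p.normSup.bd} by $Ck\,r^{k-1}\exp(-\tfrac1C n^{3/2}r^{2/n})$. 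For $n$ fixed the exponent $2/n$ is a fixed positive number, so the stretched exponential $\exp(-c\,r^{2/n})$ decays faster than any polynomial and the remaining integral converges. This yields $\E[\normSup{\chaos_n}{a}^k]<\infty$ for all $k,n$; in particular $\normSup{\chaos_n}{a}<\infty$ almost surely for each $n$.

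For the almost-sure summability, I would apply Proposition~\ref{p.normSup.bd} with the explicit choice $r=2^{-n}$. The admissibility condition $r\geq R_n$ reads $2^{-n}\geq C^{n/2}n^{-n/4}$, equivalently $(n^{1/4}/(2\sqrt C))^{n}\geq1$, which holds for all $n$ sufficiently large. For such $n$, Proposition~\ref{p.normSup.bd} gives
$
\P[\normSup{\chaos_n}{a}\geq 2^{-n}]\leq C\exp\!\big(-\tfrac1C n^{3/2}(2^{-n})^{2/n}\big)=C\exp\!\big(-\tfrac{1}{4C}n^{3/2}\big),
$
which is summable in $n$. By the Borel--Cantelli lemma, almost surely $\normSup{\chaos_n}{a}<2^{-n}$ for all large $n$; combining this with the finiteness of the finitely many remaining terms $\normSup{\chaos_n}{a}$ (from the moment bound) gives $\sum_{n=0}^\infty\normSup{\chaos_n}{a}<\infty$ almost surely.

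I do not expect a real obstacle here: the argument is routine. The one quantitative input that makes it go through cleanly is that the threshold $R_n=(Cn^{-1/2})^{n/2}$ in Proposition~\ref{p.normSup.bd} is not just small but decays super-exponentially in $n$ (of order $e^{-\frac n4\log n}$ up to an exponential factor), so it is comfortably dominated by the summable sequence $2^{-n}$, leaving ample room in the resulting tail bound $C\exp(-\tfrac{1}{4C}n^{3/2})$.
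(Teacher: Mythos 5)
Your proof is correct and fills in precisely the routine argument the paper compresses into the phrase ``Proposition~\ref{p.normSup.bd} immediately implies'': layer-cake plus the stretched-exponential tail bound for the moments, and Borel--Cantelli with $r=2^{-n}$ (noting $(2^{-n})^{2/n}=1/4$ and that the admissibility threshold $(Cn^{-1/2})^{n/2}$ is eventually dominated) for the almost-sure summability. This is the intended approach, just written out.
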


\subsubsection{Proof of Proposition~\ref{t.FW}~\ref{t.FW.fnvalued}}
\label{s.pf.tFW.fnvalued}

 Recall $ \rate $ from \eqref{e.rate}.
We begin by show that this function is a good rate function.

\begin{lem}\label{l.goodrate}
For any $ a>a_* $, the function $ \rate: \Csp_a(\rt) \to \R\cup\{+\infty\} $ is a good rate function.
\end{lem}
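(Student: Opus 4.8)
The plan is to prove that every sublevel set $K_r:=\{f\in\Csp_a(\rt):\rate(f)\le r\}$ is compact; since $\Csp_a(\rt)$ is a metric space, compactness of the sublevel sets also yields lower semicontinuity, so $\rate$ is a good rate function. Two facts drive the argument. First, the closed ball $B_M:=\{\dev\in\Lsp^2(\rt):\normL{\dev}\le M\}$ is weakly compact (Banach--Alaoglu, or reflexivity of $\Lsp^2(\rt)$), and, $\Lsp^2(\rt)$ being separable, the weak topology on $B_M$ is metrizable, so weak compactness there is equivalent to weak sequential compactness. Second, and this is the substantive input, the solution map $\dev\mapsto\Zfn(\dev)$ is continuous from $\Lsp^2(\rt)$ with its \emph{weak} topology into $\Csp_a(\rt)$ with the norm $\normSup{\Cdot}{a}$; since the weak topology on balls is metrizable, this amounts to showing that $\dev_k\rightharpoonup\dev$ with $\sup_k\normL{\dev_k}<\infty$ implies $\normSup{\Zfn(\dev_k)-\Zfn(\dev)}{a}\to0$.

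I would prove this continuity termwise in $\Zfn(\dev)=\sum_{n\ge0}\chaosfn_n(\dev)$. Fix $R<\infty$ and $\dev_k\rightharpoonup\dev$ with $\normL{\dev_k},\normL{\dev}\le R$. By Lemma~\ref{l.chaosfn.bd}, $\sum_{n>N}\big(\normSup{\chaosfn_n(\dev_k)}{a}+\normSup{\chaosfn_n(\dev)}{a}\big)\le 2\sum_{n>N}C^nR^n/\Gamma(n/2)^{1/2}$, which is small uniformly in $k$ once $N$ is large, so it suffices to show $\normSup{\chaosfn_n(\dev_k)-\chaosfn_n(\dev)}{a}\to0$ for each fixed $n$. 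Iterating \eqref{e.chaosfn} exhibits $\chaosfn_n(\dev;t,x)$, for each fixed $(t,x)\in\rt$, as the pairing $\langle\dev^{\otimes n},g^{(n)}_{t,x}\rangle$ against a square-integrable kernel $g^{(n)}_{t,x}$ with $\Vert g^{(n)}_{t,x}\Vert_{L^2}\le C^ne^{a|x|}/\Gamma(n/2)^{1/2}$; the square-integrability and this bound follow from the very recursion used in the proof of Lemma~\ref{l.chaosfn.bd}, now applied to the kernels rather than to $\chaosfn_n(\dev)$ (via Lemma~\ref{l.hkprop}\ref{item:hk2}). Since elementary tensors are dense in $L^2$ and $\langle\dev_k^{\otimes n},h_1\otimes\cdots\otimes h_n\rangle=\prod_{i=1}^n\langle\dev_k,h_i\rangle\to\prod_{i=1}^n\langle\dev,h_i\rangle$ by weak convergence, a standard approximation argument using $\normL{\dev_k}\le R$ gives the pointwise convergence $\chaosfn_n(\dev_k;t,x)\to\chaosfn_n(\dev;t,x)$ for every $(t,x)\in\rt$. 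To upgrade this to convergence in $\normSup{\Cdot}{a}$, I would invoke equicontinuity: the deterministic H\"older estimates for $\chaosfn_n$ --- proved exactly as Proposition~\ref{p.chaos.2ndmom}, with It\^{o}'s isometry replaced by the Cauchy--Schwarz inequality and using Lemma~\ref{l.hkprop}\ref{item:hk3} and \ref{item:hk4} --- make $\{\chaosfn_n(\dev_k)\}_k$ equi-H\"older on $[0,T]\times[-L,L]$ for every $L$, while the bound $\normSup{\chaosfn_n(\dev_k)}{a'}\le C(T,a',R)$ for a fixed $a'\in(a_*,a)$ forces $e^{-a|x|}|\chaosfn_n(\dev_k;t,x)|\le e^{-(a-a')|x|}C(T,a',R)$ to be uniformly small for $|x|$ large; the Arzel\`{a}--Ascoli theorem then yields the desired uniform convergence.

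Granting the continuity, fix $r<\infty$ and choose $M=M(r)<\infty$ large enough that $\{\dev:\tfrac12\normL{\dev}^2\le r+1\}\subseteq B_M$. I claim $K_r=\Zfn(B_M)$. The inclusion $\Zfn(B_M)\subseteq K_r$ is immediate from the definition \eqref{e.rate} of $\rate$. Conversely, if $\rate(f)\le r$, pick a minimizing sequence $\dev_k$ with $\Zfn(\dev_k)=f$ and $\tfrac12\normL{\dev_k}^2\to\rate(f)\le r$; then $\dev_k\in B_M$ for all large $k$, so along a subsequence $\dev_k\rightharpoonup\dev^*\in B_M$, and by the continuity above $\Zfn(\dev^*)=\lim_k\Zfn(\dev_k)=f$, whence $f\in\Zfn(B_M)$. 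Thus $K_r=\Zfn(B_M)$ is the image of a weakly compact set under a map continuous from the (metrizable) weak topology of $B_M$ into $\Csp_a(\rt)$, hence compact, and $\rate$ is a good rate function.

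The main obstacle is the weak-to-norm continuity of $\Zfn$ in the second paragraph: weak convergence of $\dev_k$ does not in general pass to the tensor powers $\dev_k^{\otimes n}$, so one must exploit the square-integrability of the iterated heat-kernel kernels $g^{(n)}_{t,x}$ to obtain pointwise convergence of each $\chaosfn_n$, and then run a separate equicontinuity/Arzel\`{a}--Ascoli argument --- built on deterministic counterparts of the H\"older bounds of Section~\ref{s.FW} --- to promote pointwise convergence to convergence in the weighted sup norm. The remaining steps are soft functional analysis.
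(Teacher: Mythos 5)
Your approach---proving compactness of the sublevel sets $K_r$ by establishing that $\dev\mapsto\Zfn(\dev)$ is continuous from bounded sets of $\Lsp^2(\rt)$ with the weak topology into $\Csp_a(\rt)$ with the norm $\normSup{\Cdot}{a}$---is genuinely different from the paper's. The paper does not prove weak-to-norm continuity of $\Zfn$. Instead it realizes $\rate$ as the pullback, via an extension $\widetilde{\Zfn}$ of $\Zfn$ to the abstract Wiener space $\bana$, of the Schilder-type rate function $\bsr$ for $\sqrt{\e}\xi$, and then invokes the criterion \cite[Lemma 2.1.4]{DS01}: one needs $\bana$-norm-continuous maps $\varphi_N:\bana\to\Csp_a(\rt)$ approximating $\widetilde{\Zfn}$ uniformly on sublevel sets of $\bsr$. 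These are built by truncating the chaos series and replacing each $(\chaos_n)_\mathrm{hom}$ by the polynomial $(\Psi_{n,k})_\mathrm{hom}$ coming from conditional expectations, with the error controlled by the Cameron--Martin theorem and \eqref{e.CM.approx}. Both proofs rely on Lemma~\ref{l.chaosfn.bd} to kill the tail of the series; you replace the abstract Deuschel--Stroock criterion and the conditional-expectation machinery with a concrete tensor-power/equicontinuity/Arzel\`a--Ascoli argument on the $\chaosfn_n$. The substantive analytic content---the $L^2$-integrability of the iterated heat-kernel kernels $g^{(n)}_{t,x}$, the deterministic H\"older estimates obtained by running the recursion \eqref{e.chaosfn} through Lemma~\ref{l.hkprop}\ref{item:hk2}--\ref{item:hk4} with Cauchy--Schwarz in place of It\^{o} isometry, and the weighted decay at spatial infinity---is sound.

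There is one genuine, though easily repaired, slip in the final set-theoretic step. With $M$ chosen so that $\{\dev:\tfrac12\normL{\dev}^2\le r+1\}\subseteq B_M$ one only gets $K_r\subseteq\Zfn(B_M)\subseteq K_{r+1}$; the inclusion $\Zfn(B_M)\subseteq K_r$ fails (a $\dev\in B_M$ may have $\tfrac12\normL{\dev}^2=r+1$), so ``$K_r=\Zfn(B_M)$'' is false as stated, and a subset of a compact set need not be compact. Two clean repairs: (i) take $M=\sqrt{2r}$, and in the minimizing-sequence step use the weak lower semicontinuity of $\normL{\Cdot}$ to get $\tfrac12\normL{\dev^*}^2\le\liminf_k\tfrac12\normL{\dev_k}^2=\rate(f)\le r$, so $\dev^*\in B_{\sqrt{2r}}$ and $K_r=\Zfn(B_{\sqrt{2r}})$ does hold; or (ii) keep your $M$, which gives $K_r\subseteq\Zfn(B_M)$ with $\Zfn(B_M)$ compact, and then prove $K_r$ closed by the identical weak-subsequence argument (if $f_j\in K_r$ with $f_j\to f$, take $\dev_j$ with $\Zfn(\dev_j)=f_j$, $\tfrac12\normL{\dev_j}^2\le r+\tfrac1j$, pass to a weakly convergent subsequence $\dev_j\rightharpoonup\dev^*$, and use weak lower semicontinuity plus weak-to-norm continuity of $\Zfn$ to conclude $\Zfn(\dev^*)=f$ and $\tfrac12\normL{\dev^*}^2\le r$). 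With either fix the argument is complete.
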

\begin{proof}
Throughout this proof we write $ \hilb = \Lsp^2(\rt) $ and $ \norm{\Cdot}_\hilb = \normL{\Cdot} $.
Recall that $ \hilb\subset\bana $ is the Cameron--Martin subspace of $ \bana $.

We begin with a reduction.
It is well-known that under $ \mu $, the random vector $ \sqrt{\e}\xi $ satisfies an \ac{LDP} on $ \bana $ with speed $ \e^{-1} $ 
and the good rate function $\bsr: \bana \to \R\cup\{+\infty\} $ given by
$
	\bsr(\dev) := 
	\frac{1}{2} \norm{\dev}^2_\hilb $ for $ \dev \in \hilb 
$
and
$
	\bsr(\dev) := + \infty
$
for 
	$ \dev \notin \hilb$,
c.f. \cite[Chapter 4]{ledoux96}.
Recall that $ \Zfn $ maps $ \hilb $ to $ \Csp_a(\rt) $.
We extend the domain of this map to $ \bana $ by setting the function be $ 0 $ outside $ \hilb $, i.e.,
\begin{align*}
	\widetilde{\Zfn}: \bana \to \Csp_a(\rt),
	\quad
	\widetilde{\Zfn}(\zeta) 
	:= 
	\left\{\begin{array}{l@{,}l}
		\Zfn(\zeta) &\text{ when } \zeta\in\hilb,
	\\
		0			&\text{ otherwise}.
	\end{array}\right.
\end{align*}
Referring to \eqref{e.rate}, 
we see that $ \rate $ is a pullback of  $\bsr$  via $ \widetilde{\Zfn} $.
Let $ \Omega(r) := \{\zeta\in \bana : \bsr(\zeta) \leq r \} $ denote a sub-level set of $ \bsr $.
By \cite[Lemma 2.1.4]{DS01}, to prove $ \rate $ 
is a good rate function,
it suffices to construct a sequence of continuous functions $ \varphi_N: \bana\to \Csp_a(\rt) $ 
such that for all $ r<\infty $, 
\begin{align}
	\tag{\ref*{e.DSgoal}'}
	\label{e.DSgoal.}
	\lim_{N\to\infty}
	\sup_{\zeta\in\Omega(r)} \normSup{ \widetilde{\Zfn}(\zeta) - \varphi_N(\zeta) }{a} =0.
\end{align}
Since $ \bsr(\zeta)<\infty $ only when $ \zeta\in\hilb $,
we have $ \Omega(r) = \{\dev\in \hilb : \norm{\dev}^2_{\hilb} \leq 2r \} $, and \eqref{e.DSgoal.} reduces to
\begin{align}
	\label{e.DSgoal}
	\lim_{N\to\infty}
	\sup_{\zeta\in\Omega(r)} \normSup{ \Zfn(\dev) - \varphi_N(\dev) }{a} =0.
\end{align}

We will construct the $ \varphi_N $ via truncation.
First, combining \eqref{e.Zfn.expansion} and Lemma~\ref{l.chaosfn} gives, for $ \dev\in\hilb $,
\begin{align}
	\label{e.Zfn.approx}
	\Zfn(\dev) = \sum_{n=0}^\infty \chaosfn_n(\dev) = \sum_{n=0}^{N} (\chaos_n)_\text{hom}(\dev) + \sum_{n> N} \chaosfn_n(\dev).
\end{align}
The $ n>N $ terms in \eqref{e.Zfn.approx} can be bounded by Lemma~\ref{l.chaosfn.bd}.

Focusing on the $ n\leq N $ terms in \eqref{e.Zfn.approx}, we seek to approximate each $ (\chaos_n)_\text{hom}(\dev) $ by a continuous function.
To this end we follow the argument in \cite[Section~3]{hairer15}.
Recall the notation $ \pair(f) $ from \eqref{e.pair}
and recall the orthonormal basis $ \{e_1,e_2,\ldots\} \subset\hilb $ from Section~\ref{s.chaos.fnvalued}.
Regarding $ \pair(e_i): \bana \to \R $ as a random variable,
we let $ \mathcal{F}_k $ be the sigma algebra generated by $ \pair(e_1), \dots, \pair(e_k) $, 
and set $ \Psi_{n,k} := \E[ \chaos_n | \mathcal{F}_k ] $.
Given that $ \chaos_n $ belongs to the $ n $-th $ \banaE $-valued Wiener chaos (recall that $ \banaE = \Csp_a(\rt) $),
it is standard to check:
\begin{enumerate}[label=(\roman*)]
\item \label{item.HW1}
	$ \lim_{k\to\infty} \E[  \normSup{ \chaos_n - \Psi_{n,k} }{a}^2 ] =0 $,
\item \label{item.HW2}
	$ \Psi_{n,k}  $ can be expressed as a finite sum of the form $ \Psi_{n,k} = \sum y_\alpha \prod_{i=1}^k \pair(e_i)^{\alpha_i} $,
	where $ y_\alpha \in \Csp_a(\rt) $
	and $ \alpha = (\alpha_1,\alpha_2,\ldots) \in \Zn\times\Zn\times \ldots $.
\end{enumerate}
Now consider the function
$
	(\Psi_{n,k})_\text{hom}: \bana \to \Csp_a(\rt)
$
defined by
$
	(\Psi_{n,k})_\text{hom}(\zeta) := \int_{\bana} \Psi_{n,k}(\xi+\zeta) \mu(\d \xi).
$
A priori, such an integral is guaranteed to be well-defined only for $ \zeta\in\hilb $.
Yet for the special case considered here,
the integral is well-defined for all $ \zeta\in\bana $ and the result gives a continuous function $ \bana \to \Csp_a(\rt) $.
To see why,
recall the definition of $ \bana $ from \eqref{e.bana},
and for $ \zeta\in\bana $ write $ \zeta = \sum_{i\geq 1} \zeta_i e_i $.
From \ref{item.HW2} we have $ \int_{\bana} \Psi_{n,k}(\xi+\zeta) \mu(\d \xi) = \sum y_\alpha \prod_{i=1}^k \E[(\zeta_i+\Xi_i)^{\alpha_i}] $, 
where $ \Xi_1,\Xi_2,\ldots $ are independent standard $ \R $-valued Gaussian random variables, and the sum is \emph{finite}.
From the last expression we see that the integral is well-defined and gives a continuous function $ \bana \to \Csp_a(\rt) $.
Next, for $ \dev\in\hilb $, by the Cameron--Martin theorem, we have
$
	\normSup{ (\chaos_n)_\text{hom}(\dev) - (\Psi_{n,k})_\text{hom}(\dev) }{a}
	=
	\normSup{ 
		\int_\bana \exp\big( \pair(\dev) - \tfrac12 \norm{\dev}^2_\hilb \big) 
		\big( \chaos_n(\xi) - \Psi_{n,k}(\xi) \big)
		\mu(\d\xi) 
	}{a}.
$
Applying the Cauchy--Schwarz inequality to the last expression gives
\begin{align}
	\label{e.CM.approx}
	\normSup{ (\chaos_n)_\text{hom}(\dev) - (\Psi_{n,k})_\text{hom}(\dev) }{a}^2
	\leq
	\exp\big( \tfrac12 \norm{\dev}^2_\hilb \big) 
	\E\big[ \normSup{ \chaos_n - \Psi_{n,k} }{a}^2 \big].
\end{align}
The right hand side converges to zero as $ k\to\infty $ by \ref{item.HW1}.
We have obtained an approximate of $ (\chaos_n)_\text{hom} $ by the continuous function $ (\Psi_{n,k})_\text{hom} $.

We now construct $ \varphi_N $.
For fixed $ N $, invoke \ref{item.HW1} to obtain $ k_n\in\Zp $ such that
$ \E[ \normSup{ \chaos_n - \Psi_{n,k_n} }{a}^2 ] \leq (N+1)^{-2} $.
Set $ \varphi_N := \sum_{n=0}^N \Psi_{n,k_n} $.
This is a continuous function $ \bana\to\Csp_a(\rt) $ since each $ \Psi_{n,k} $ is.
Subtract $ \varphi_N $ from both sides of \eqref{e.Zfn.approx},
take $ \normSup{\Cdot}{a} $ on both sides,
and use \eqref{e.CM.approx}, $ \E[ \normSup{ \chaos_n - \Psi_{n,k_n} }{a}^2 ] \leq (N+1)^{-2} $, and Lemma~\ref{l.chaosfn.bd} to bound the result.
We have, for all $ \dev\in\hilb $,
\begin{align*}
	\normSup{ \, \Zfn(\dev) - \varphi_N(\dev) }{a} 
	\leq
	\exp\big( \tfrac14 \norm{\dev}^2_\hilb \big) (N+1)^{-1}
	+
	\sum_{n\geq N} \frac{1}{\Gamma(n/2)^{\frac{1}{2}}} \big( C(a,T)\,\norm{\dev}_{\hilb}\big)^n.
\end{align*}
Now consider $ \dev\in\Omega(2r) $, whence $ \norm{\dev}^2_\hilb \leq 2r $.
We see that the desired property \eqref{e.DSgoal} follows.
\end{proof}

Recall that $ \ZZ_{N,\e} := \sum_{n=0}^N \e^{n/2}\chaos_{n} $.
Next we show that $ \ZZ_{N,\e} $ is an exponentially good approximation of $ \ZZ_\e  $.

\begin{prop}
\label{p.expapprox}
For any $ r>0 $ and $ a>a_* $, we have
$
	\lim\limits_{N \to \infty}\limsup\limits_{\e \to 0} \e \log \P\big[ \normSup{\ZZ_{N, \e} - \ZZe}{a} \geq r \big] = -\infty.
$
\end{prop}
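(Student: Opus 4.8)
The plan is to bound the chaos-expansion remainder by a union bound that feeds into the single-chaos tail estimate of Proposition~\ref{p.normSup.bd}. First I would write $ \ZZe - \ZZ_{N,\e} = \sum_{n>N}\e^{n/2}\chaos_n $; for $ \e\in(0,1] $ this series converges in $ \normSup{\Cdot}{a} $ almost surely by Corollary~\ref{c.normSup.bd}, and the triangle inequality gives $ \normSup{\ZZ_{N,\e}-\ZZe}{a}\leq\sum_{n>N}\e^{n/2}\normSup{\chaos_n}{a} $. Choosing the geometric weights $ r_n:=r\,2^{-(n-N)} $, for which $ \sum_{n>N}r_n=r $, a union bound then gives
\begin{align*}
	\P\big[ \normSup{\ZZ_{N,\e}-\ZZe}{a}\geq r \big]
	\leq
	\sum_{n>N}\P\big[ \normSup{\chaos_n}{a}\geq\e^{-n/2}r_n \big].
\end{align*}

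Next I would apply Proposition~\ref{p.normSup.bd}, with $ C=C(T,a) $ the constant from that proposition, to each summand with threshold $ s=\e^{-n/2}r_n $. For fixed $ N $ and $ r $, the hypothesis $ \e^{-n/2}r_n\geq(Cn^{-1/2})^{n/2} $ holds simultaneously for all $ n>N $ once $ \e $ is below some $ \e_0=\e_0(N,r) $ (the binding case is $ n=N+1 $, since $ \log(Cn^{-1/2}) $ decreases in $ n $). For such $ \e $ the $ n $-th summand is at most $ C\exp(-\tfrac1C n^{3/2}(\e^{-n/2}r_n)^{2/n})=C\exp(-\tfrac1C n^{3/2}\e^{-1}r_n^{2/n}) $. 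The elementary observation I would use is that $ r_n^{2/n}=\tfrac14\,r^{2/n}\,4^{N/n}\geq\tfrac14\,r^{2/n} $ and $ r^{2/n}\geq\min(1,r^{2/(N+1)}) $ for $ n\geq N+1 $, so that there is $ N_0=N_0(r) $ with $ r_n^{2/n}\geq\tfrac18 $ for all $ N\geq N_0 $ and all $ n>N $.

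Combining these, for $ N\geq N_0 $ and $ \e<\e_0(N,r) $,
\begin{align*}
	\P\big[ \normSup{\ZZ_{N,\e}-\ZZe}{a}\geq r \big]
	\leq
	\sum_{n>N}C\exp\big( -\tfrac{1}{8C}\,n^{3/2}\e^{-1} \big)
	\leq
	C'\exp\big( -\tfrac{1}{8C}\,(N+1)^{3/2}\e^{-1} \big),
\end{align*}
where the last step holds once $ \e $ is small, since the ratio of the $ (n+1) $-st summand to the $ n $-th is $ \exp(-\tfrac1{8C}((n+1)^{3/2}-n^{3/2})\e^{-1})\to0 $. Taking $ \e\log $ of the last display and letting $ \e\to0 $ gives $ \limsup_{\e\to0}\e\log\P[\normSup{\ZZ_{N,\e}-\ZZe}{a}\geq r]\leq-\tfrac{1}{8C}(N+1)^{3/2} $, and sending $ N\to\infty $ yields the statement.

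I do not expect a substantive obstacle here: the whole argument is a union bound feeding Proposition~\ref{p.normSup.bd}. The only mildly delicate points are checking that proposition's threshold hypothesis uniformly over $ n>N $ (for $ \e $ small, with $ N $ fixed) and the uniform-in-$ N $ lower bound $ r_n^{2/n}\geq\tfrac18 $; both are routine.
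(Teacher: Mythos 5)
Your proof is correct and follows essentially the same route as the paper's: decompose $\ZZe-\ZZ_{N,\e}$ as the tail $\sum_{n>N}\e^{n/2}\chaos_n$, union bound with the geometric thresholds $r\,2^{N-n}$, feed each into Proposition~\ref{p.normSup.bd}, and observe that for fixed $N$ the threshold hypothesis holds once $\e$ is small. The only difference is cosmetic bookkeeping of the factor $(2^{N-n}r)^{2/n}$: you lower-bound it outright by a constant ($\tfrac18$ for $N\geq N_0(r)$), whereas the paper keeps it as an exponential $e^{(N-n)/(Cn)}$ and linearizes; both yield the same final estimate $\limsup_{\e\to0}\e\log\P[\cdot]\lesssim -(N+1)^{3/2}$.
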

\begin{proof}
By definition, $\ZZe - \ZZ_{N,\e} = \sum_{n >N} \e^{\frac{n}{2}}\chaos_{n} $.
Fix arbitrary $ N \in \Zp $ and $ r>0 $. 
We seek to apply Proposition~\ref{p.normSup.bd} with $ r\mapsto 2^{N-n} \e^{-n/2}r $ and $ n > N $.
For fixed $ N,r $, the required condition $ 2^{N-n}\e^{-n/2}r \geq (Cn^{-1/2})^{n/2} $ is satisfied for all $ n>N $ as long as $ \e $ is small enough.
Summing the result over $ N>n $ and applying the union bound gives
\begin{align*}
	\P\big[ \normSup{ \ZZe - \ZZ_{N,\e} }{a} \geq r \big]
	\leq
	\sum_{n>N} \P\big[ \normSup{  \chaos_n }{a} \geq 2^{N-n}\e^{-\frac{n}{2}}r \big]
	\leq
	C \, \sum_{n>N} \exp\big( - \tfrac{1}{C} \e^{-1} n^{\frac{3}{2}} e^{\frac{N-n}{Cn}} \big),
\end{align*}
where $ C=C(T,a,r) $.
On the right hand side, use $ e^{\frac{N-n}{Cn}} \geq 1 - \frac{N-n}{Cn} $ (which holds since $ n>N $), sum the result.
On both sides of the result,
apply $ \e\log(\,\Cdot\,) $, and take the limits $ \e\to 0 $ and $ N\to\infty $ in order.
Doing so concludes the desired result.
\end{proof}

We seek to apply \cite[Theorem~4.2.16~(b)]{DZ94}.
Doing so requires establishing a few properties of the rate functions.
Let $ B_r(f) := \{ f'\in\Csp_{a}(\rt) : \normSup{f'-f}{a} < r \} $
denote the open ball of radius $ r $ around $ f $.
Recall $ \rate $ from \eqref{e.rate} and recall $ \rate_N $ from \eqref{e.rateN}.
\begin{lem}\label{l.DZ}
\begin{enumerate}[leftmargin=20pt, label=(\alph*)]
\item[]
\item \label{l.DZ.1}
	For any closed $ F\subset \Csp_{a}(\rt) $, we have
	$ \displaystyle 	\inf_{f\in F} \rate(f) \leq \liminf_{N\to\infty} \, \inf_{f\in F} \rate_N(f). $
\item \label{l.DZ.2}
	For any $ f_0 \in \Csp_a(\rt) $, we have
	$ \displaystyle \rate(f_0) = \lim_{r\to 0} \liminf_{N\to\infty} \, \inf_{f\in B_r(f_0)} \rate_N(f). $
\end{enumerate}
\end{lem}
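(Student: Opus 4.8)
\emph{Strategy.} I would deduce both parts from a single continuity property, which isolates the only nontrivial input.
\textbf{Claim.} If $ \dev_k \rightharpoonup \dev_\infty $ weakly in $ \Lsp^2(\rt) $ with $ \sup_k \normL{\dev_k} < \infty $, and $ N_k \to \infty $, then $ \sum_{n=0}^{N_k} \chaosfn_n(\dev_k) \to \Zfn(\dev_\infty) $ in $ \Csp_a(\rt) $.
Granting the Claim, part~\ref{l.DZ.1} follows by weak compactness: we may assume $ L := \liminf_N \inf_F \rate_N < \infty $, pass to a subsequence of $ N $ along which $ \inf_F \rate_N \to L $, and pick $ f_N \in F $ and $ \dev_N \in \Lsp^2(\rt) $ with $ \sum_{n=0}^N \chaosfn_n(\dev_N) = f_N $ and $ \tfrac12\normL{\dev_N}^2 \le \inf_F \rate_N + \tfrac1N $. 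Since $ \{\dev_N\} $ is bounded in the Hilbert space $ \Lsp^2(\rt) $, along a further subsequence $ \dev_N \rightharpoonup \dev_\infty $ with $ \tfrac12\normL{\dev_\infty}^2 \le \liminf_N \tfrac12\normL{\dev_N}^2 \le L $ by weak lower semicontinuity of the norm. The Claim gives $ f_N = \sum_{n=0}^N \chaosfn_n(\dev_N) \to \Zfn(\dev_\infty) $ in $ \Csp_a(\rt) $, and since $ F $ is closed, $ \Zfn(\dev_\infty) \in F $; hence $ \inf_F \rate \le \rate(\Zfn(\dev_\infty)) \le \tfrac12\normL{\dev_\infty}^2 \le L $.

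For part~\ref{l.DZ.2}, the inequality ``$ \ge $'' uses only Lemma~\ref{l.chaosfn.bd}: given $ \dev $ with $ \Zfn(\dev) = f_0 $ and $ \tfrac12\normL{\dev}^2 \le \rate(f_0) + \delta $, Lemma~\ref{l.chaosfn.bd} yields $ \sum_{n=0}^N \chaosfn_n(\dev) \to \Zfn(\dev) = f_0 $ in $ \Csp_a(\rt) $, so for each fixed $ r > 0 $ we have $ \sum_{n=0}^N \chaosfn_n(\dev) \in B_r(f_0) $ for $ N $ large, whence $ \inf_{B_r(f_0)} \rate_N \le \tfrac12\normL{\dev}^2 \le \rate(f_0)+\delta $ eventually; as this holds for every $ r>0 $, $ \lim_{r\to 0}\liminf_N \inf_{B_r(f_0)}\rate_N \le \rate(f_0)+\delta $, and $ \delta\downarrow 0 $ gives the bound. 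For ``$ \le $'', set $ L := \lim_{r\to 0}\liminf_N \inf_{B_r(f_0)}\rate_N $ and assume $ L<\infty $; for each $ m\in\Zp $, since $ \liminf_N \inf_{B_{1/m}(f_0)}\rate_N \le L $, choose $ N_m\uparrow\infty $, $ f_m\in B_{1/m}(f_0) $, and $ \dev_m\in\Lsp^2(\rt) $ with $ \sum_{n=0}^{N_m}\chaosfn_n(\dev_m) = f_m $ and $ \tfrac12\normL{\dev_m}^2 \le L+\tfrac3m $; extract $ \dev_m\rightharpoonup\dev_\infty $ with $ \tfrac12\normL{\dev_\infty}^2 \le L $. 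The Claim gives $ f_m=\sum_{n=0}^{N_m}\chaosfn_n(\dev_m)\to\Zfn(\dev_\infty) $ in $ \Csp_a(\rt) $, while $ f_m\to f_0 $, so $ \Zfn(\dev_\infty)=f_0 $ and $ \rate(f_0)\le\tfrac12\normL{\dev_\infty}^2\le L $.

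It remains to prove the Claim. By Lemma~\ref{l.chaosfn.bd}, $ \normSup{\chaosfn_n(\dev)}{a}\le C^n\Gamma(n/2)^{-1/2}\normL{\dev}^n $, so over $ \{\normL{\dev}\le M\} $ the tail $ \sum_{n>N}\chaosfn_n(\dev) $ is uniformly small in $ \Csp_a(\rt) $; hence it suffices to show $ \chaosfn_n(\dev_k)\to\chaosfn_n(\dev_\infty) $ in $ \Csp_a(\rt) $ for each fixed $ n $. I would argue by induction on $ n $ using $ \chaosfn_n(\dev;t,x) = \int_0^t\int_\R \hkk{t-s}{x}{y}\chaosfn_{n-1}(\dev;s,y)\dev(s,y)\,\d s\d y $; the case $ n=0 $ is trivial since $ \chaosfn_0 $ does not depend on $ \dev $. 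For the step, split $ \chaosfn_{n-1}(\dev_k)\dev_k - \chaosfn_{n-1}(\dev_\infty)\dev_\infty = \big(\chaosfn_{n-1}(\dev_k)-\chaosfn_{n-1}(\dev_\infty)\big)\dev_k + \chaosfn_{n-1}(\dev_\infty)(\dev_k-\dev_\infty) $. The contribution of the first summand to $ \chaosfn_n(\dev_k)-\chaosfn_n(\dev_\infty) $ is bounded in $ \normSup{\Cdot}{a} $ by $ CM\,\normSup{\chaosfn_{n-1}(\dev_k)-\chaosfn_{n-1}(\dev_\infty)}{a} $ via Cauchy--Schwarz and Lemma~\ref{l.hkprop}\ref{item:hk2}, which $ \to 0 $ by induction. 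For the second summand, put $ v_k(t,x):=\int_0^t\int_\R \hkk{t-s}{x}{y}\chaosfn_{n-1}(\dev_\infty;s,y)(\dev_k-\dev_\infty)(s,y)\,\d s\d y $; for fixed $ (t,x) $ this is the $ \Lsp^2(\rt) $ pairing of $ \dev_k-\dev_\infty $ with the fixed element $ \ind_{\{s<t\}}\hkk{t-s}{x}{y}\chaosfn_{n-1}(\dev_\infty;s,y) $, which lies in $ \Lsp^2(\rt) $ by Lemma~\ref{l.hkprop}\ref{item:hk2}, so $ v_k(t,x)\to 0 $ pointwise; moreover Lemma~\ref{l.hkprop} (parts~\ref{item:hk2},~\ref{item:hk3},~\ref{item:hk4}) together with $ \sup_k\normL{\dev_k-\dev_\infty}<\infty $ yields uniform-in-$ k $ H\"older estimates and a uniform-in-$ k $ bound on $ \normSup{v_k}{a'} $ for any fixed $ a'\in(a_*,a) $, so $ \{v_k\} $ is precompact in $ \Csp_a(\rt) $ by a weighted Arzel\`a--Ascoli argument; with $ v_k\to 0 $ pointwise this forces $ v_k\to 0 $ in $ \Csp_a(\rt) $, completing the induction.

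The main obstacle is the Claim, specifically the fixed-$ n $ continuity for $ n\ge 2 $: since $ \dev\mapsto\chaosfn_n(\dev) $ is a polynomial map of degree $ n\ge 2 $ it is \emph{not} weakly continuous in general, and the argument relies essentially on the Volterra (time-ordered) structure of the iterated integral — this is what makes $ \hkk{t-s}{x}{y}\chaosfn_{n-1}(\dev_k;s,y) $ converge \emph{strongly} in $ \Lsp^2(\rt) $, so that it can be paired against the merely weakly convergent $ \dev_k $. Upgrading this to convergence in the weighted sup-norm $ \normSup{\Cdot}{a} $, uniformly in $ (t,x) $ rather than pointwise, is what forces the Arzel\`a--Ascoli step and the bookkeeping with the two exponents $ a'<a $.
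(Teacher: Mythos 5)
Your proof is correct, and it takes a genuinely different route from the paper. The paper's proof of both parts leans on Lemma~\ref{l.goodrate} (compactness in $\Csp_a(\rt)$ of the sublevel sets of $\rate$, plus its lower semicontinuity) together with the tail bound of Lemma~\ref{l.chaosfn.bd}: for part~\ref{l.DZ.1}, the paper picks near-minimizers $\dev_k$ with $f_k := \sum_{n=0}^{N_k}\chaosfn_n(\dev_k) \in F$, forms $f'_k := \Zfn(\dev_k)$, shows $\normSup{f'_k - f_k}{a}\to 0$, and extracts a $\Csp_a$-convergent subsequence of $\{f'_k\}$ from the compact sublevel set of $\rate$; the ``$\leq$'' direction of part~\ref{l.DZ.2} is then obtained by applying part~\ref{l.DZ.1} to $F = \bar{B_r(f_0)}$ and invoking lower semicontinuity. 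You bypass Lemma~\ref{l.goodrate} entirely, replacing compactness in $\Csp_a(\rt)$ by weak compactness of bounded sets in $\Lsp^2(\rt)$ at the level of the deviation $\dev$, and supply the missing continuity input via your Claim --- weak-to-norm sequential continuity, along bounded sequences with $N_k\to\infty$, of $(\dev,N)\mapsto\sum_{n=0}^N\chaosfn_n(\dev)$. Your proof of the Claim is sound: induction on chaos degree, peeling off one factor of $\dev$ via the Volterra recursion $\chaosfn_n=\int p\,\chaosfn_{n-1}\dev$, then upgrading pointwise convergence of the weak-pairing term $v_k$ to $\Csp_a$-convergence by a weighted Arzel\`a--Ascoli step that uses the exponent gap $a'<a$ for uniform spatial decay and Lemma~\ref{l.hkprop} for uniform-in-$k$ H\"older estimates. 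You are also right to flag that the naive degree-$n$ polynomial map would not be weakly continuous, so the time-ordered structure is doing real work. The trade-off is clean: the paper's route is shorter given that Lemma~\ref{l.goodrate} is already needed for Proposition~\ref{t.FW}, whereas your Claim implies weak-to-norm continuity of $\Zfn$ on bounded sets and hence recovers Lemma~\ref{l.goodrate} as a corollary, so your approach is more self-contained and makes the underlying continuity mechanism explicit.
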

\begin{proof}
\ref{l.DZ.1}\
Let $ A $ denote the right hand side and assume without loss of generality $ A<\infty $.
Referring to the definition of $ \rate_N $ in \eqref{e.rateN},
we let $ \{(N_k,\dev_k)\}_{k=1}^\infty \subset \Zp\times\Lsp^2(\rt) $
be such that $ N_1<N_2<\ldots \to \infty $, $ \normL{\dev_k} \leq A + \frac{1}{k} $, and $ \sum_{n=0}^{N_k} \chaosfn_n(\dev_k) =: f_k \in F $.
Our next step is to relate $ (\dev_k,f_k) $ to $ \rate $.
Recall that $ \Zfn(\dev) = \sum_{n=0}^\infty \chaosfn_n(\dev) $.
Letting $ f'_k := f_k + \sum_{n>N_k} \chaosfn_n(\dev_k) \in \Csp_a(\rt) $, we have $ \Zfn(\dev_{k}) = \til{f}_k $.
Referring to the definition of $ \rate $ in \eqref{e.rate},
we see that $ \rate(f'_k) \leq \frac{1}{2}\normL{\dev_k} \leq A+\frac{1}{k} $.
Also, $ \normSup{ f'_k-f_k }{a} \leq \sum_{n>N_k} \normSup{ \chaosfn_n(\dev_k) }{a} $.
Using Lemma~\ref{l.chaosfn.bd} and $ \normL{\dev_k} \leq A+1 $ to bound the last expression gives
\begin{align}
	\label{e.fk.f'k}
	\lim_{k\to\infty}\normSup{ f'_k-f_k }{a} = 0.
\end{align}
By Lemma~\ref{l.goodrate}, the sequence $ \{f'_k\}_{k=1}^\infty $ is contained in a compact set.
Hence, after passing to a subsequence we have $ f'_k \to f_* $ in $ \Csp_a(\rt) $.
The condition \eqref{e.fk.f'k} remains true after passing to the subsequence.
Since $ f_k\in F $ and $ F $ is closed, we have $ f_*\in F $.
By Lemma~\ref{l.goodrate}, $ \rate $ is lower semi-continuous, whereby
$ \rate(f_*) \leq \liminf_{k} \rate(f'_k) $.
Lower bound the left hand side by $ \inf_{f\in F}\rate(f) $
and upper bound the right hand side by $ \liminf_k (A+\frac{1}{k}) = A $.
We conclude the desired result.

\ref{l.DZ.2}\
Apply Part~\ref{l.DZ.1}
with $ F=\bar{B_r(f_0)} $ and use the lower semicontinuity of $ \rate $ on the left hand side of the result.
Doing so gives the inequality $ \leq $ for the desired result.
It hence suffices to show the reverse inequality $ \geq $.
To this end, we assume without loss of generality $ \rate(f_0)<\infty $,
and let $ \{\til\dev_k\}_{k=1}^\infty \subset \Lsp^2(\rt) $
be such that $ \normL{\til\dev_k} \leq \rate(f_0) + \frac{1}{k} $ and that 
$ \Zfn(\dev_k) = \sum_{n=0}^{\infty} \chaosfn_n(\til\dev_k) = f_0 $.
Let $ \til{f}_k := \sum_{n=0}^{n} \Zfn(\dev_{k}) $.
Referring to the definition of $ \rate_N $ in \eqref{e.rateN},
we see that $ \rate_N(\til{f}_k) \leq \frac{1}{2}\normL{\dev_k} \leq \rate(f_0)+\frac{1}{k} $.
Also, using Lemma~\ref{l.chaosfn.bd} and $ \normL{\dev_k} \leq \rate(f_0)+1 $ gives
$
	\lim_{k\to\infty}\normSup{ f_0-\til{f}_k }{a} 
	=0.
$
This statement implies that, for any given $ r>0 $ and for all $ k $ large enough (depending on $ r $), we have $ \til{f}_k \in B_r(f_0) $.
From this and $ \rate_N(\til{f}_k) \leq \rate(f_0)+\frac{1}{k} $ the desired result follows.
\end{proof}

We are now ready to complete the proof of Proposition~\ref{t.FW}~\ref{t.FW.fnvalued}.
The \ac{LDP} for $ \{ \ZZ_{N,\e} \}_\e $ is established in Proposition~\ref{p.HW} with the rate function $ \rate_N $.
Given this, we apply \cite[Theorem~4.2.16~(b)]{DZ94} to go from the large deviations of $ \{ \ZZ_{N,\e} \}_\e $ to that of $ \{ \ZZ_\e \}_\e $.
This theorem asserts that $ \{\ZZ_\e\}_\e $ satisfies an \ac{LDP} with the rate function $ I $
contingent upon the following conditions.
\begin{enumerate}[leftmargin=20pt]
\item \label{enu.check1} $ I $ is a good rate function,
\item \label{enu.check2} $ \{ \ZZ_{N,\e} \}_\e $ is an exponentially good approximation (defined in \cite[Definition~4.2.14]{DZ94}) of $ \{ \ZZ_{\e} \}_\e $,
\item \label{enu.check3} $ \displaystyle I(f_0) = \sup_{r > 0} \liminf_{N \to \infty} \inf_{f \in B_r(f_0)} I_N (f) $, and
\item \label{enu.check4} $ \displaystyle \inf_{f \in F} I(f) \leq \limsup_{N \to \infty} \inf_{f \in F} I_m (f)$, for every closed set  $ F \subset \Csp_{a}(\rt) $.
\end{enumerate}
These conditions are verified by 
Lemma~\ref{l.goodrate},
Proposition~\ref{p.expapprox},
Lemma~\ref{l.DZ}~\ref{l.DZ.2}, and
Lemma~\ref{l.DZ}~\ref{l.DZ.1}, respectively. %
Applying \cite[Theorem~4.2.16~(b)]{DZ94} completes the proof of Proposition~\ref{t.FW}~\ref{t.FW.fnvalued}.

\subsection{The narrow wedge initial data, Proof of Proposition~\ref{t.FW}~\ref{t.FW.nw}}
\label{s.pf.tFW.nw}

Throughout this subsection, we fix $ 0<\cuttime<T<\infty $, $ a\in\R $, 
and let $ \ZZ_\e $ denote the solution of \eqref{e.she.scaled} with the initial data $ \ZZe(0,\Cdot)=\delta_0(\Cdot) $.

The proof of Proposition~\ref{t.FW}~\ref{t.FW.nw} parallels that of Proposition~\ref{t.FW}~\ref{t.FW.fnvalued}, 
starting with the analog of Proposition~\ref{p.chaos.2ndmom.nw}:

\begin{customprop}{\ref*{p.chaos.2ndmom}-nw}
\label{p.chaos.2ndmom.nw}
Fix $ \theta_1 \in (0,\frac12) $, $\theta_2\in(0,1) $, and $ n \in \Zp $. 
There exists $ C=C(T,\cuttime,a,\theta_1,\theta_2) $ such that for all $ t, t'\in[\cuttime,T] $ and $ x,x'\in \R $,
\begin{enumerate}[leftmargin = 20pt, label = (\alph*)]
\item \label{item:chaosmoment.x.nw}
$
	\E\big[\big(
		\chaos_n(t, x) - \chaos_n(t, x')
	\big)^2\big] 
	\leq 
	\frac{C^n}{\Gamma(\frac{n}{2})} (e^{2a|x|} \vee e^{2a|x'|})
	|x - x'|^{\theta_2}
$, and 
\item \label{item:chaosmoment.t.nw}
$
	\E\big[\big(
		\chaos_n(t, x) - \chaos_n(t', x)
	\big)^2\big] 
	\leq 
	\frac{C^n}{\Gamma(\frac{n}{2})} e^{2a|x|}
	|t - t'|^{\theta_1}
$.
\end{enumerate}
\end{customprop}
\begin{proof}
Throughout this proof we write $ C=C(T,\cuttime,a,\theta_1,\theta_2) $.

\ref{item:chaosmoment.x.nw}\
By \cite[Lemma~2.4]{Cor18}, we have
\begin{align}
	\label{e.Cor}
	\E[\chaos_n(t,x)^2] = t^{\frac{n}{2}} 2^{-n} \Gamma(\tfrac{n}{2})^{-1} \hkn{t}{x}^2.
\end{align}
The identity \eqref{e.chaos.iterate.x} continues to hold here.
Inserting \eqref{e.Cor} into the right hand side of \eqref{e.chaos.iterate.x} gives
\begin{align*}
	\E\big[(\chaos_n (t,x) - \chaos_n(t,x'))^2\big] 
	\leq
	\frac{C^n}{\Gamma(\frac{n}{2})} \int_0^t \int_\R \big(\hkk{t-s}{x}{y} - \hkk{t-s}{x'}{y}\big)^2 \hkn{s}{y}^2 \d y \d s. 
\end{align*}
On the right hand side, divide the integral into two parts for $ s > \cuttime/2 $ and for $ s < \cuttime/2 $.
For the former use Lemma~\ref{l.hkprop}~\ref{item:hk0} to bound $ \hk(s,y)^2 \leq C e^{2a|y|} $ (note that $ s>\cuttime/2 $)
and use Lemma~\ref{l.hkprop}~\ref{item:hk3} to bound the remaining integral;
for the latter use Lemma~\ref{l.hkprop}~\ref{item:hk5} 
to bound $ (\hkk{t-s}{x}{y} - \hkk{t-s}{x'}{y})^2 \leq C |x-x'|^{\theta_2} (e^{2a|x-y|} \vee a^{2a|x'-y|}) $
(note that $ t-s \geq \cuttime/2 $) 
and use Lemma~\ref{l.hkprop}~\ref{item:hk2} to bound the remaining integral.
Doing so concludes the desired result.

\ref{item:chaosmoment.t.nw}\
The identity \eqref{e.chaos.iterate.t} continues to hold here.
Inserting \eqref{e.Cor} into the right hand side of \eqref{e.chaos.iterate.t} gives
\begin{align}
	\E\big[(\chaos_n(t,x) - \chaos_n(t',x))^2 \big] 
	&\leq
	\frac{C^n}{\Gamma(\frac{n}{2})} 
	\Big( 
	\int_0^{t'} \int_\R \big(\hkk{t-s}{x}{y} - \hkk{t'-s}{x}{y} \big)^2 \hkn{s}{y}^2  \d y \d s
	\label{e.chaosmoment.nw.1}
\\
	\label{e.chaosmoment.nw.2}
		&\quad + \int_{t'}^{t} \int_\R \hkk{t-s}{x}{y}^2 \hkn{s}{y}^2 \d y \d s
	\Big).
\end{align}
On the right hand side of \eqref{e.chaosmoment.nw.1}, divide the integral into two parts for $ s > \cuttime/2 $ and for $ s < \cuttime/2 $.
For the former use Lemma~\ref{l.hkprop}~\ref{item:hk0} to bound $ \hkn{s}{y}^2 \leq Ce^{2a|y|} $ (note that $ s>\cuttime/2 $)
and use Lemma~\ref{l.hkprop}~\ref{item:hk4} to bound the remaining integral;
for the latter use Lemma~\ref{l.hkprop}~\ref{item:hk6} 
to bound $ (\hkk{t-s}{x}{y} - \hkk{t'-s}{x}{y})^2 \leq C |t'-t|^{\theta_1} e^{2a|x-y|} $ (note that $ t'-s \geq \cuttime/2 $)
and use Lemma~\ref{l.hkprop}~\ref{item:hk2} to bound the remaining integral.
The integral in \eqref{e.chaosmoment.nw.2} can be evaluated to be 
$
	\int_{t'}^t 4^{-1} \pi^{-3/2} t^{-1/2}s^{-1/2}(t-s)^{-1/2} \exp(-\frac{x^2}{2t}) \d s.
$
Using $ s,t\geq \cuttime $ to bound the last integral gives
$ \eqref{e.chaosmoment.nw.2} \leq C |t-t'|^{1/2} e^{2a|x|} \leq C |t-t'|^{\theta_1} e^{2a|x|} $.
From the preceding bounds we conclude the desired result.
\end{proof}

Given Proposition~\ref{p.chaos.2ndmom.nw},
a similar proof of Proposition~\ref{p.normSup.bd.nw} adapted to the current setting yields

\begin{customprop}{\ref*{p.normSup.bd}-nw}
\label{p.normSup.bd.nw}
There exists $ C=C(T,\cuttime,a) $ such that, for all $ r \geq (Cn^{-\frac12})^{\frac{n}{2}} $ and $ n\in\Zn $,
\begin{align*}
	\P\big[ \, \normSup{\chaos_{n}}{a,\cuttime} \geq r \big]
	\leq
	C\,\exp\big( -\tfrac{1}{C} n^{\frac32} r^{\frac{2}{n}}\big). 	
\end{align*}
\end{customprop}
\begin{customcor}{\ref*{c.normSup.bd}-nw}
\label{c.normSup.bd.nw}
We have $ \E[ \, \normSup{\chaos_{n}}{a,\cuttime}^k ] < \infty $ for all $ k,n\in\Zn $,
and $ \P[ \sum_{n=0}^\infty \normSup{\chaos_{n}}{a,\cuttime} <\infty ] = 1 $.
\end{customcor}

Given Proposition~\ref{p.normSup.bd.nw},
the rest of the proof for Proposition \ref{t.FW} \ref{t.FW.nw} follows the arguments in Sections~\ref{s.pf.tFW.fnvalued} mutatis mutandis.

\section{The quadratic and $ \frac{5}{2} $ laws}
\label{s.onepint}
Fix $ \ZZ_\e(0,\Cdot) = \delta_0(\Cdot) $.
Our goal is to prove Theorem~\ref{t.main}.
By the scaling \eqref{e.scaling},
we have 
\begin{align*}
	\P\big[ \hh(2\e,0) + \sqrt{4\pi\e} \geq \scl \big] = \P\big[ \sqrt{4\pi}\ZZe(2,0) \geq e^{\scl} \big],
	\quad
	\P\big[ \hh(2\e,0) + \sqrt{4\pi\e} \leq -\scl \big] = \P\big[ \sqrt{4\pi}\ZZe(2,0) \leq e^{-\scl} \big].
\end{align*}
Hence Theorem~\ref{t.main}~\ref{t.main.exist} follows from Proposition~\ref{t.FW}~\ref{t.FW.nw} (for any $ a\in\R $ and $ T\geq 2 $) and the contraction principle,
with
\begin{align}
	\label{e.rateone+}
	\rateone(\scl) &= \inf\{ \tfrac{1}{2} \normL{\dev}^2 \, : \, \sqrt{4\pi}\Zfn(\dev;2,0) \geq e^{\scl} \},
\\
	\label{e.rateone-}
	\rateone(-\scl) &= \inf\{ \tfrac{1}{2} \normL{\dev}^2 \, : \, \sqrt{4\pi}\Zfn(\dev;2,0) \leq e^{-\scl} \}.
\end{align}
Proving Theorem~\ref{t.main}~\ref{t.main.bulk} and \ref{t.main.lower} thus amounts to 
evaluating the infimums in \eqref{e.rateone+} and \eqref{e.rateone-},
which will be carried out in Sections~\ref{s.nearc} and \ref{s.lower}, respectively.

\subsection{Near-center tails, proof of Theorem~\ref{t.main}~\ref{t.main.bulk}}
\label{s.nearc}

In view of \eqref{e.rateone+} -- \eqref{e.rateone-}, our goal is to show
\begin{align}
	\label{e.nearc.goal.1}
	\lim_{\scl\to 0} 
	\scl^{-2} \inf\{ \tfrac{1}{2} \normL{\dev}^2 \, : \, \sqrt{4\pi}\Zfn(\dev;2,0) \geq e^{\scl} \} &= \tfrac{1}{\sqrt{2\pi}},
\\
	\label{e.nearc.goal.2}
	\lim_{\scl\to 0} 
	\scl^{-2} \inf\{ \tfrac{1}{2} \normL{\dev}^2 \, : \, \sqrt{4\pi}\Zfn(\dev;2,0) \leq e^{-\scl} \} &= \tfrac{1}{\sqrt{2\pi}}.
\end{align}
The proofs of \eqref{e.nearc.goal.1} and \eqref{e.nearc.goal.2} are the same so we consider only \eqref{e.nearc.goal.1}.
Fix $ \dev \in\Lsp^2([0,2]\times \R) $.
Since our goal is to prove \eqref{e.nearc.goal.1}, we assume $ \normL{\dev} \leq \scl $ and $ \scl \leq 1 $.
Recall that $ \Zfn(\dev;t,x) = \sum_{n=0}^\infty \chaosfn_n(\dev;t,x) $,
with $ \chaosfn_n(\dev;t,x) $ is given \eqref{e.chaosfn.nw}.
Let $ O(\scl^k) $ denote a generic function of $ \scl $ such that $ |O(\scl^k)|\leq C\scl^k $, for all $ \scl\in(0,1] $.
Specialize at $ (t,x) = (2,0) $ and apply the bound in Lemma~\ref{l.chaosfn.bd.nw} for $ n \geq 2 $.
We have
\begin{align}
	\label{e.nearc.1}
	\sqrt{4\pi} \Zfn(\dev;2,0)
	=
	1 + \sqrt{4\pi} \int_0^2 \int_{\R} \dev(s,y) \hkn{2-s}{y}  \hkn{s}{y} \, \d y \d s + O(\scl^2).
\end{align}

Now assume $ \sqrt{4\pi}\Zfn(\dev;2,0) \geq e^{\scl} $.
Inserting this inequality into \eqref{e.nearc.1} and Taylor expanding $ e^{\scl} $ gives
$$
	\sqrt{4\pi} \int_0^2 \int_{\R} \dev(s,y) \hkn{2-s}{y} \hkn{s}{y} \, \d y \d s \geq \scl + O(\scl^2).
$$
On the left hand side, apply the Cauchy--Schwarz inequality 
to separate $ \dev(s,y) $ and $ \hkn{2-s}{y}  \hkn{s}{y} $,
and use 
\begin{align}
	\label{e.hk.id}
	\int_0^2 \int_\R \hkn{2-s}{y}^2 \hkn{s}{y}^2 \d y \d s = 2^{-5/2}\pi^{-1/2}
\end{align}
We have $ \normL{\dev}  \geq (2/\pi)^{1/4}\scl + O(\scl^2) $.
Taking square of both sides and divide the result by $ \frac{1}{2\scl^2} $ gives the inequality `$ \geq $' in \eqref{e.nearc.goal.1}.

To show the reverse inequality, take $ \kappa>1 $ and $ \dev(s,y) = \scl\kappa 2^{3/2} \hkn{2-s}{y} \hkn{s}{y}  $.
Inserting this $ \dev $ into \eqref{e.nearc.1} and using \eqref{e.hk.id} give 
$ \sqrt{4\pi} \Zfn(\dev;2,0) \geq 1 + \kappa \scl + O(\scl^2) $.
With $ \kappa>1 $, the last expression is larger than $ e^{\scl} $ for all $ \scl $ small enough.
On the other hand, by using \eqref{e.hk.id} we have $ \frac{1}{2}\scl^{-2}\normL{\dev}^2 = \frac{\kappa^2}{\sqrt{2\pi}} $.
Hence the left hand side of \eqref{e.nearc.goal.1} is bounded by $ \frac{\kappa^2}{\sqrt{2\pi}} $.
Now taking $ \kappa \downarrow 1 $ completes the proof.

\subsection{Deep lower tail, proof of Theorem~\ref{t.main}~\ref{t.main.lower}}
\label{s.lower}

\subsubsection{The Feynman--Kac formula and scaling}
\label{s.feynmankac}

Here we consider the deep lower-tail regime, i.e., $ -\scl\to-\infty $.
The first step is to express $ \Zfn(\dev;t,x) $ by the Feynman--Kac formula.
Namely,
\begin{align}
	\label{e.feynmankac.1}
	\Zfn(\dev;t,x) 
	&= 
	\E_x \Big[ \exp\Big( \int_0^t \dev(s,\bm(t-s)) \, \d s \Big) \, \delta_0(\bm(t)) \Big]
\\
	\label{e.feynmankac.2}
	&= 
	\E_{0\to x} \Big[ \exp\Big( \int_0^t \dev(s,\bb(s)) \, \d s \Big)  \Big] \hk(t,x).
\end{align}
In \eqref{e.feynmankac.1}, the expectation $ \E_x $ is taken with respect to a Brownian motion that starts from $ x $,
and in \eqref{e.feynmankac.2} the $ \E_{0\to x} $ is taken with respect to a Brownian bridge $ \bb(s) $ that starts from $ \bb(0)=0 $ and ends in $ \bb(t)=x $.
Indeed, the expression \eqref{e.feynmankac.1} is equivalent to \eqref{e.Zfn.expansion}
upon Taylor-expanding the exponential in \eqref{e.feynmankac.1} and exchanging the sum with the expectation.
The exchange is justified by the bound in Lemma~\ref{l.chaosfn.bd.nw}.
Set 
\begin{align}
	\label{e.feynmankac}
	\hfn(\dev;t,x) := \log(\sqrt{4\pi}\Zfn(\dev;t,x)) 
	=
	\log(\sqrt{4\pi}\hk(t,x))
	+	
	\log \E_{0\to x} \Big[ \exp\Big( \int_0^t \dev(s,\bb(s)) \, \d s \Big)  \Big].
\end{align}
Take log on both sides of \eqref{e.feynmankac.1} and insert the result into \eqref{e.rateone-}.
We have 
\begin{align}
	\label{e.onept.rate}
	\rateone(- \scl)
	=
	\inf\big\{ \tfrac12 \normL{ \dev }^2  : \, \hfn(\dev;2,0) \leq -\scl \big\}.
\end{align}

We expect the right hand side of \eqref{e.onept.rate} to grow as $ \scl^{5/2} $ when $ \scl\to\infty $.
As pointed out in \cite{kolokolov07,kolokolov09,meerson16,kamenev16}, such a power law follows from scaling.
More precisely, 
when $ \scl \to \infty $, it is natural to scale $ \hfn \mapsto \scl^{-1} \hfn $ and $ \dev\mapsto \scl \dev $.
Accordingly, for the Brownian bridge in \eqref{e.feynmankac} to complete on the same footing,
it is desirable to have a factor $ \scl^{-1/2} $ multiplying $ \bb(s) $.
This is so because large deviations of $ \scl^{-1/2} \bb(s) $ occurs at rate $ \scl $, which is compatible with the scaling $ \dev\mapsto \scl \dev $.
To implement these scaling, in \eqref{e.feynmankac} replace $ \dev(t,x) \mapsto \scl \dev(t,\scl^{-1/2} x) $ and $ x\mapsto \scl^{1/2} x $
and divide the result by $ \scl $. 
Let $ \hfn_\scl(\dev;t,x) := \scl^{-1} \hfn(\scl\dev(\Cdot,\scl^{-1/2} \Cdot);t,\lambda^{1/2} x) $ denote the resulting function on the left hand side.
We have
\begin{align}
	\label{e.feynmankac.scl}
	\hfn_\scl(\dev;t,x) 
	&= 
	\scl^{-1}\log(\sqrt{4\pi}\hk(t,\scl^{\frac12} x)) + \scl^{-1} \log  \E_{0\to\scl^{1/2} x} \Big[ \exp\Big( \int_0^t \scl\dev(s,\scl^{-\frac12} \bb(s)) \, \d s \Big) \Big].
\end{align}
The replacement $ \dev(t,x) \mapsto \scl \dev(t,\scl^{-1/2} x) $ changes $ \normL{\dev}^2 $ by a factor of $ \scl^{5/2} $,
so \eqref{e.onept.rate} translates into
\begin{align}
\label{e.onept.rate.scl}
	\rateone(-\scl)
	=
	\scl^\frac52
	\inf\big\{ \tfrac12 \normL{ \dev }^2  : \, \hfn_\scl(\dev;2,0) \leq -1 \big\}.
\end{align}

Proving Theorem~\ref{t.main}~\ref{t.main.lower} hence amounts to proving
\begin{align}
\label{e.deep.lower}
	\lim_{\scl\to\infty}
	\big( \inf\big\{ \tfrac12 \normL{ \dev }^2  : \, \hfn_\scl(\dev;2,0) \leq -1 \big\} \big)
	=
	\frac{4}{15\pi}.	
\end{align}

\subsubsection{The optimal deviation $ \devm $ and its geodesics}
\label{s.devm}

We begin by introducing a function $ \devm \in \Lsp^2([0, 2] \times \R) $.
The definition of this function is motivated by physics argument \cite{kolokolov09,meerson16,kamenev16}; see Section~\ref{s.intro.phys}.
In the context of Proposition~\ref{t.FW},
$ \dev $ describes possible deviations of the spacetime white noise $ \sqrt\e\xi $.
Such $ \devm $ is a candidate for the optimal $ \dev $,
so we refer to $ \devm $ as the \textbf{optimal deviation}.

To define $ \devm $, consider the unique $ \Csp^1[1,2) $-valued solution $ r(t) $ of the equation   
\begin{align}
	\label{e.r.ode}
	r'(t) = 2^\frac12 \pi^{-\frac12} \, r^2 \sqrt{r-\pi/2}, & \text{ for } t \in (1,2), \quad r(1)=\pi/2, \quad \text{and } r|_{(1,2)}>\pi/2,
\end{align}
and symmetrically extend it to $ \Csp^1(0,2) $ by setting $ r(t) := r(2-t) $ for $ t\in(0,1) $.
Integrating \eqref{e.r.ode} gives
\begin{align}
	\label{e.r}
	\frac{(r(t)-\pi/2)^\frac12}{r(t)\pi/2} + (\tfrac{2}{\pi})^{\frac32} \arctan\Big( \big( \tfrac{r(t)}{\pi/2}-1 \big)^{\frac12} \Big) = (\tfrac{2}{\pi})^{\frac12} \, |t-1|.
\end{align}
Let us note a few useful properties of $ r(t) $.
It can be checked from \eqref{e.r} that $ \lim_{s\downarrow 0}r(s)=\lim_{s\uparrow 2}r(s)=+\infty $.
The integral $ \int_0^2 r(t) \, \d t= 2 \int_1^2 r(t) \, \d t $ can be evaluated with the aid of \eqref{e.r.ode}:
perform the change of variables $ 2 \int_1^2 r(t) \, \d t = 2\int_{\pi/2}^\infty \frac{r}{r'(t)} \d r $
and use \eqref{e.r.ode} to substitute $ r'(t) $. The result reads
\begin{align}
	\label{e.r.int}
	\int_0^2 r(t) \, \d t
	=
	\int_0^2 |r(t)| \, \d t
	=
	2\pi.	
\end{align}
Set $ \ell(t): = 1/r(t) $ for $ t\in(0,2) $, and let $ \ell(0):=0 $ and $ \ell(2):=0 $ so that $ \ell\in\Csp[0,2] $. We define
\begin{align}
	\label{e.devm}
	\devm(t,x) := - \frac{r(t)}{2\pi} \,\Big( 1 - \frac{x^2}{\ell(t)^2} \Big)_+.
\end{align}

Next, setting $ \dev=\devm $ in \eqref{e.feynmankac}, we seek to characterize the $ \scl\to\infty $ limit of the resulting function:
\begin{align}
\label{e.hm}
	\hm(t,x) := \lim_{\scl\to\infty} \hfn_\scl(\devm;t,x),
\end{align}
for all $ (t,x)\in(0,2]\times\R $.
Even though only $ \hm(2,0) $ will be relevant toward the proof of \eqref{e.deep.lower},
we treat general $ (t,x)\in(0,2]\times\R $ for its independent interest.
\begin{rmk}
Indeed, with $ \devm $ being the optimal deviation of the spacetime white noise,
the function $ \hm $ should be viewed as the limit shape of 
$ \hh_{\e,\scl}(t,x) := \scl^{-1}\log \ZZe(t,\scl^{1/2}x) $ under the conditioning $ \{ \hh_{\e,\scl}(0,2) \leq -1 \} $ with $ \scl \gg 1 $. A explicit expression of $ \hm(1,x) $ is given in \cite{hartmann2019optimal}.
One can show that \cite[Eq's~(10)-(11)]{hartmann2019optimal} 
coincide with the variational expression of $ \hm $ given in \eqref{e.minimizing.path} below.

\emph{Proving} that $ \hm $ is the limit shape of $ h_{\e,\scl} $ remains open, which we leave for future work.

\end{rmk}

To characterize \eqref{e.hm}, we first turn the limit into certain minimization problem over paths, by using Varadhan's lemma.
To setup notation, we let $ \Hsp^1_{0,x}[0,t] $ denote the space of $ \Hsp^1 $ functions on $ [0,t] $ such that $ \Path(0)=0 $ and $ \Path(t)=x $,
and likewise for $ \Csp_{0,x}[0,t] $.
For $ \Path\in\Hsp^1_{0,x}[0,t] $, set
\begin{align}
	\label{e.energy}
	\energy(\Path;t,x) = \int_0^t \tfrac{1}{2} \Path'(s)^2 - \devm(s,\Path(s)) \ \d s.
\end{align}
\begin{lem}
\label{l.devm.varadhan}
For any $ (t,x)\in(0,2]\times\R $,
\begin{align}
	\label{e.l.devm.varadhan}
	\lim_{\scl\to\infty} \hfn_\scl(\devm;t,x) =: \hm(t,x) = - \inf\big\{ \energy(\Path;t,x) : \Path\in\Hsp^1_{0,x}[0,t] \big\}.
\end{align}
\end{lem}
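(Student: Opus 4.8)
The plan is to recognize the right-hand side of \eqref{e.feynmankac.scl} (with $\dev=\devm$) as a Laplace-type functional of a diffusively rescaled Brownian bridge, and to apply Varadhan's lemma. First I would dispense with the deterministic prefactor: since $\sqrt{4\pi}\,\hk(t,y)=\sqrt{2/t}\,e^{-y^2/(2t)}$, one has $\scl^{-1}\log\big(\sqrt{4\pi}\,\hk(t,\scl^{1/2}x)\big)=\tfrac{1}{2\scl}\log(2/t)-\tfrac{x^2}{2t}\to-\tfrac{x^2}{2t}$ as $\scl\to\infty$. For the remaining term I would set $W_\scl(s):=\scl^{-1/2}\bb(s)$, where $\bb$ is a standard Brownian bridge on $[0,t]$ from $0$ to $\scl^{1/2}x$; then $W_\scl$ is $\Csp_{0,x}[0,t]$-valued and the second summand of \eqref{e.feynmankac.scl} equals $\scl^{-1}\log\E\big[\exp(\scl F(W_\scl))\big]$ with $F(\Path):=\int_0^t\devm(s,\Path(s))\,\d s$.

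Next I would establish the large deviation principle for $\{W_\scl\}_\scl$. Writing $\bb(s)=\beta(s)+\tfrac{s}{t}\scl^{1/2}x$ with $\beta$ a standard bridge from $0$ to $0$ on $[0,t]$ gives $W_\scl(s)=\scl^{-1/2}\beta(s)+\tfrac{s}{t}x$. By Schilder's theorem for the Brownian bridge, $\scl^{-1/2}\beta$ obeys an LDP on $\Csp_{0,0}[0,t]$ with speed $\scl$ and good rate function $\eta\mapsto\tfrac12\int_0^t\eta'(s)^2\,\d s$ on $\Hsp^1_{0,0}[0,t]$ and $+\infty$ otherwise; composing with the homeomorphism of path space that adds the fixed affine path $s\mapsto\tfrac{s}{t}x$, and invoking the contraction principle, $\{W_\scl\}_\scl$ obeys an LDP on $\Csp_{0,x}[0,t]$ with speed $\scl$ and good rate function $I_{0,x}(\Path):=\tfrac12\int_0^t(\Path'(s)-x/t)^2\,\d s=\tfrac12\int_0^t\Path'(s)^2\,\d s-\tfrac{x^2}{2t}$ for $\Path\in\Hsp^1_{0,x}[0,t]$, $+\infty$ otherwise.

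Then I would verify the hypotheses of Varadhan's lemma for $F$. Since $\devm\le0$ pointwise and $\int_0^2 r(s)\,\d s=2\pi$ by \eqref{e.r.int}, the functional $F$ is bounded, with $F(\Path)\in[-1,0]$. It is also continuous on $\Csp_{0,x}[0,t]$: if $\Path_n\to\Path$ uniformly then $\devm(s,\Path_n(s))\to\devm(s,\Path(s))$ for every $s\in(0,2)$, while $|\devm(s,\Path_n(s))|\le r(s)/(2\pi)\in L^1[0,2]$, so dominated convergence gives $F(\Path_n)\to F(\Path)$. Varadhan's lemma then yields
\[
	\scl^{-1}\log\E\big[e^{\scl F(W_\scl)}\big]\ \xrightarrow[\scl\to\infty]{}\ \sup_{\Path\in\Hsp^1_{0,x}[0,t]}\big\{F(\Path)-I_{0,x}(\Path)\big\}=\tfrac{x^2}{2t}-\inf_{\Path\in\Hsp^1_{0,x}[0,t]}\energy(\Path;t,x),
\]
and adding the prefactor limit $-\tfrac{x^2}{2t}$ produces \eqref{e.l.devm.varadhan}.

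The part that requires genuine care, rather than bookkeeping, is the integrable-but-unbounded singularity of $\devm(s,\cdot)$ as $s\downarrow0$ or $s\uparrow2$, where $r(s)\to\infty$: one cannot dominate $\devm$ by a constant, so both the boundedness and the continuity of $F$ (and indeed the finiteness of the limiting variational problem) rest on the identity $\int_0^2 r(s)\,\d s=2\pi$ from \eqref{e.r.int}. The remaining ingredients --- Schilder's theorem for the bridge, the contraction principle, and Varadhan's lemma --- are standard.
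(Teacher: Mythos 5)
Your proposal is correct and follows essentially the same route as the paper: isolate the deterministic heat-kernel prefactor, obtain the LDP for the rescaled Brownian bridge from Schilder's theorem and the contraction principle, check the hypotheses of Varadhan's lemma (where the integrability $\int_0^2 r(s)\,\d s=2\pi$ from \eqref{e.r.int} and the sign $\devm\le 0$ are the key inputs), and apply it. The only cosmetic difference is that you prove continuity of $F$ by dominated convergence with $r(s)/(2\pi)$ as the dominating function, whereas the paper uses a uniform truncation of $\devm$ near $s\in\{0,2\}$; both arguments rest on the same integrability of $r$.
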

\begin{proof}
Let $ F(\Path) := \int_0^t  \devm(s,\Path(s))\, \d s $.
In \eqref{e.feynmankac.scl}, set $ \dev\mapsto\devm $ and let $ \scl\to\infty $ to get
\begin{align}
\label{e.varadhan.}
\lim_{\scl\to\infty} \hfn_\scl(\devm;t,x)
=
-\tfrac{x^2}{2t}
+
\lim_{\scl\to\infty}
\scl^{-1} \log \E_{0\to \scl^{1/2} x} \big[ \exp\big( \scl F(\scl^{-\frac12} \bb(s)) \big) \big].
\end{align}
We have assumed that the last limit exists.
To prove the existence of the limit and to evaluate it we appeal to Varadhan's lemma.
To start, let us establish the \ac{LDP} for $ \{ \scl^{-1/2} \bb(s) : s\in[0,t] \} $. 
Express $ \bb $ as $ \bb(s) = \bm(s) + (x-\bm(t)) s /t $, where $ \bm $ denotes a standard Brownian motion.
Since the map $ \Path \mapsto \Path +(x-\Path(t))s/t $ from $ \{ \Path\in\Csp[0,t] : \Path(0)=0\} $ to $ \Csp_{0,x}[0,t] $  is continuous,
we can use the contraction principle to push forward the \ac{LDP} for $ \scl^{-1/2}\bm $.
The result asserts that $ \scl^{-1/2}\bb $ enjoys an \ac{LDP} with speed $ \scl $ and the rate function
$	
I_\textbb(\Path) := \inf\{ \frac12 \int_0^t (\Path'(s)-v-\frac{x}{t})^2 \d s : v\in \R \}
$ 
for $ \Path \in \Hsp^1_{0,x}[0,t] $
and $ I_\textbb(\Path) = +\infty $ otherwise.
Optimizing over $ v\in\R $ gives
\begin{align*}
I_\textbb(\Path) 
= 
\left\{\begin{array}{l@{,}l}
\int_0^t \frac12\Path'(s)^2 \d s - \frac{x^2}{2t} &\text{ for } \Path \in \Hsp^1_{0,x}[0,t],
\\
+\infty	&\text{ for } \Path \in \Csp_{0,x}[0,t]\setminus \Hsp^1_{0,x}[0,t].
\end{array}\right.
\end{align*}

To apply Varadhan's lemma we need to check, for $ F(\Path) := \int_0^t  \devm(s,\Path(s))\, \d s $:
\begin{enumerate}[leftmargin=16pt, label=(\roman*),itemsep=5pt]
\item \label{l.devm.varadhan.conti} $ F: \Csp_{0,x}[0,t] \to \R $ is continuous.\\
This statement would follow if $ \devm $ were uniformly continuous on $ [0,t]\times\R $.
The function  $ \devm(s,y) $ however is discontinuous at $ (0,0) $ and $ (2,0) $.
To circumvent this issue, for small $ \delta>0 $, we consider the truncation $ \devm^\delta(s,y) := \ind_{\set{|s-1|<1-\delta}}\devm(s,y) $.
The truncated functional $ F_\delta(\Path):=\int  \devm^\delta(t,\Path(t))\, \d t  $ is continuous on $ \Csp_{0,x}[0,t] $.
The difference $ F -F_\delta $ is bounded by $ |(F-F_\delta)(\Path)| \leq \int_{|s-1|>1-\delta} |\devm(s,\Path(s))|\, \d s \leq \frac{1}{2\pi} \int_{|s-1|>1-\delta} |r(s)| \d s $.
By \eqref{e.r.int}, the last expression converges to zero as $ \delta\to 0 $, \emph{uniformly} in $ \Path\in\Csp_{0,x}[0,t] $.
From these properties we conclude that $ F: \Csp_{0,x}[0,t] \to \R $ is continuous.
\item $ \displaystyle \lim_{M\to\infty} \limsup_{\scl\to\infty} \scl^{-1} \log \E_{0\to x }\big[ \exp\big(\scl F(\scl^{-1/2}\bb)\big) \ind\set{F(\scl^{-1/2}\bb)>M}\big] = -\infty  $\\
This holds since $ \devm \leq 0 $, which implies $ F \leq 0 $.
\end{enumerate}

Varadhan's lemma applied to the last term in \eqref{e.varadhan.} completes the proof.
\end{proof}

Lemma~\ref{l.devm.varadhan} expresses $ \hm(t,x) $ in terms of a variational problem over paths.
We refer to the minimizing path(s) in \eqref{e.l.devm.varadhan} (if exists) as a \textbf{geodesic}.
The next step is to identify the geodesic.
Let 
\begin{align*}
	\Omega := \{ (s,y) : s\in[0,2], |y| \leq \ell(s) \} 
\end{align*}
denote the support of $ \devm $, with the boundary $ \partial\Omega = \{ (s,y) : t\in[0,2], |y| =\ell(s) \} $.

\begin{prop}
\label{p.geodesic}
\begin{enumerate}[leftmargin=20pt, label=(\alph*)]
\item[]
\item\label{p.geodesic.exists} For any $ (t,x)\in(0,2]\times\R $, the infimum
\begin{align}
\label{e.minimizing.path}
\hm(t,x) = 
-\inf\big\{ \energy(\Path;t,x) : \Path\in\Hsp^1_{0,x}[0,t] \big\}
\end{align}
is attended in $ \Hsp^1_{0,x}[0,t] $.
\item\label{p.geodesic.00} When $ (t,x)=(2,0) $, the geodesics are $ \alpha \ell(\Cdot) $, $ |\alpha| \leq 1 $.
\item\label{p.geodesic.in} When $ (t,x) \in \Omega\cap\{t\in(0,2)\} $, the unique geodesic is $ (x/\ell(t)) \ell(\Cdot) $.
\item\label{p.geodesic.out} When $ (t,x) \in \Omega^\mathrm{c}\cap\{t\in(0,2]\} $, is the geodesic is the unique $ \Csp^1_{0,x}[0,t] $ path such that $ \Path|_{[0,t_*]}=\ell|_{[0,t_*]} $ and $ \Path|_{[t_*,t]} $ is linear, for some $ t_*\in(0,t) $.
\end{enumerate}
\end{prop}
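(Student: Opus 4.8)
The plan is to handle part \ref{p.geodesic.exists} by the direct method of the calculus of variations, and then to extract parts \ref{p.geodesic.00}--\ref{p.geodesic.out} from one algebraic identity together with a last-exit-time decomposition. For \ref{p.geodesic.exists}: since $\devm\le0$, the functional $\energy(\Path;t,x)=\int_0^t\tfrac12\Path'(s)^2-\devm(s,\Path(s))\,\d s$ is a sum of two nonnegative terms, so it is bounded below and coercive on $\Hsp^1_{0,x}[0,t]$ (the constraints $\Path(0)=0$, $\Path(t)=x$ bound $\|\Path\|_\infty$ by $\|\Path'\|_{L^2}$). A minimizing sequence $\Path_n$ then has bounded $\Hsp^1$-norms, so along a subsequence $\Path_n\rightharpoonup\Path$ weakly in $\Hsp^1$ and uniformly on $[0,t]$; the kinetic term is convex hence weakly lower semicontinuous, and for the potential term one uses $|\devm(s,y)|\le r(s)/2\pi$, integrable by \eqref{e.r.int}, together with the continuity of $\devm(s,\cdot)$ for fixed $s\in(0,2)$ and dominated convergence. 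Thus $\energy(\Path;t,x)\le\liminf_n\energy(\Path_n;t,x)$ and $\Path$ is a minimizer.

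The computational engine for the remaining parts is that $a:=\ell'/\ell$ solves the Riccati equation $a'+a^2+r^3/\pi=0$ on $(0,2)$, equivalently $\ell''+(r^3/\pi)\ell=0$, which follows from $\ell=1/r$ and $\ell''=-r^2/\pi$ (a direct consequence of \eqref{e.r.ode}). Using this, for every $\Path\in\Hsp^1_{0,x}[0,t]$ one has the pointwise identity
\begin{align*}
\tfrac12\Path'(s)^2-\devm(s,\Path(s))
=\tfrac12\big(\Path'(s)-a(s)\Path(s)\big)^2+\tfrac12\tfrac{\d}{\d s}\big(a(s)\Path(s)^2\big)+\Big(\tfrac{r(s)^3}{2\pi}\Path(s)^2-\devm(s,\Path(s))\Big),
\end{align*}
where the last bracket equals $r(s)/2\pi$ when $|\Path(s)|\le\ell(s)$ and exceeds it otherwise (inside $\Omega$, $\devm(s,y)=-\tfrac{r}{2\pi}+\tfrac{r^3}{2\pi}y^2$; outside, $\devm=0$ and $r^3y^2>r$). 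Integrating over $[\delta,t-\delta]$ and letting $\delta\to0$: the boundary terms $\tfrac12 a\Path^2$ vanish because $|\Path(s)|^2\le s\int_0^s(\Path')^2=o(s)$ near $s=0$ while $a(s)\sim\tfrac23 s^{-1}$ there, and symmetrically near $s=2$; and $\tfrac1{2\pi}\int_0^2 r=1$ by \eqref{e.r.int}. For $(t,x)=(2,0)$ this gives $\energy(\Path;2,0)=\int_0^2\tfrac12(\Path'-a\Path)^2\,\d s+\int_0^2(\text{bracket})\,\d s\ge 1$, with equality iff $\Path'=a\Path$ a.e. (so $\Path/\ell$ is constant, i.e.\ $\Path=\alpha\ell$) and $|\Path|\le\ell$ (so $|\alpha|\le1$): this is \ref{p.geodesic.00}, and in particular $\hm(2,0)=-1$. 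For $(t,x)\in\Omega$ with $t\in(0,2)$, the same computation over $[0,t]$ (now the boundary term $\tfrac12 a(t)x^2$ at $s=t$ survives) gives $\energy(\Path;t,x)\ge\tfrac12 a(t)x^2+\tfrac1{2\pi}\int_0^t r$, with equality iff $\Path'=a\Path$ and $|\Path|\le\ell$, i.e.\ $\Path=(x/\ell(t))\ell(\Cdot)$, which lies in $\Omega$ precisely because $|x|\le\ell(t)$; this is \ref{p.geodesic.in}, together with $\hm(t,x)=-\tfrac1{2\pi}\int_0^t r-\tfrac12 a(t)x^2$.

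For \ref{p.geodesic.out}, say $x>\ell(t)\ge0$ (the case $x<-\ell(t)$ is symmetric). Given $\Path$, let $\sigma$ be the last time with $|\Path(s)|\le\ell(s)$; continuity forces $\Path(\sigma)=\ell(\sigma)$ and $\Path>\ell$ on $(\sigma,t]$, where $\devm(s,\Path(s))=0$. Split $\energy(\Path;t,x)=\energy(\Path|_{[0,\sigma]};\sigma,\ell(\sigma))+\int_\sigma^t\tfrac12\Path'^2$; the first piece is $\ge-\hm(\sigma,\ell(\sigma))$ by \ref{p.geodesic.in}, the second is $\ge J(\sigma):=\inf\{\int_\sigma^t\tfrac12\psi'^2:\psi(\sigma)=\ell(\sigma),\psi(t)=x,\psi\ge\ell\}$. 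Let $t_*\in(0,t)$ be the unique point with $\ell'(t_*)(t-t_*)+\ell(t_*)=x$, unique because $s\mapsto\ell'(s)(t-s)+\ell(s)$ has derivative $\ell''(s)(t-s)=-\tfrac{r(s)^2}{\pi}(t-s)<0$ and decreases from $+\infty$ at $0^+$ to $\ell(t)<x$. By concavity of $\ell$, the obstacle problem is solved by following $\ell$ up to $\max(\sigma,t_*)$ and then taking the chord to $(t,x)$ (which, being a secant/tangent of a concave curve, stays above $\ell$), so $J(\sigma)=\int_\sigma^{t_*}\tfrac12\ell'^2+\tfrac12\tfrac{(x-\ell(t_*))^2}{t-t_*}$ for $\sigma\le t_*$ and $J(\sigma)=\tfrac12\tfrac{(x-\ell(\sigma))^2}{t-\sigma}$ for $\sigma\ge t_*$. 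Integrating by parts, $\int_\sigma^\tau\tfrac12\ell'^2=\tfrac12[\ell\ell']_\sigma^\tau+\tfrac1{2\pi}\int_\sigma^\tau r$ (using $\ell\ell''=-r/\pi$), and combining with the formula for $\hm$ on $\partial\Omega$, one checks $-\hm(\sigma,\ell(\sigma))+J(\sigma)=g(t_*)$ for $\sigma\le t_*$, where $g(\sigma):=\tfrac1{2\pi}\int_0^\sigma r+\tfrac12\ell(\sigma)\ell'(\sigma)+\tfrac12\tfrac{(x-\ell(\sigma))^2}{t-\sigma}$; while for $\sigma\ge t_*$ the bound is $g(\sigma)$ and $g'(\sigma)=\tfrac12\big(\ell'(\sigma)-\tfrac{x-\ell(\sigma)}{t-\sigma}\big)^2\ge0$, strictly positive off $\sigma=t_*$. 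Hence $\energy(\Path;t,x)\ge g(t_*)$ for every $\Path$, with equality forcing $\sigma=t_*$, $\Path|_{[0,t_*]}=\ell$ (uniqueness in \ref{p.geodesic.in}), and $\Path|_{[t_*,t]}$ equal to the chord (strict convexity of the obstacle problem). This identifies the unique geodesic, it is $\Csp^1$ by the tangency at $t_*$, and $\hm(t,x)=-g(t_*)$.

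The main obstacle is part \ref{p.geodesic.out}: the completing-the-square identity by itself yields only the weak bound $\energy(\Path;t,x)\ge x^2/2t$ there, so one must additionally localize at the last exit time and solve the resulting obstacle problem, and then verify — via the integration-by-parts identities linking $\int\ell'^2$, $\int r$, and $\ell\ell'$ — that the glued lower bounds telescope to exactly the value $g(t_*)$ realized by the ``follow $\ell$, then chord'' path. Throughout, the endpoints $s=0,2$, where $\devm$ is singular and $a=\ell'/\ell$ blows up, require the $o(s)$-decay of $\Path(s)^2$ and the integrability of $r$ recorded in \eqref{e.r.int}.
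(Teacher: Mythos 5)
Your proof is correct, and it takes a genuinely different route from the paper's. The paper establishes \ref{p.geodesic.00} via a symmetrization-around-$s=1$ reduction followed by Euler--Lagrange arguments: any geodesic stays in $\Omega$ (Lemma~\ref{l.geodesic}\ref{l.geodesic.within}), solves \eqref{e.euler.lang} off $\partial\Omega$ (Lemma~\ref{l.geodesic}\ref{l.geodesic.euler.lang}), and must be $\Csp^1$ across touchdowns (Lemma~\ref{l.geodesic}\ref{l.geodesic.C1}); ODE uniqueness of \eqref{e.euler.lang.} with $\Path'(1)=0$, plus a contradiction argument for paths touching $\partial\Omega$, then pins down $\Path=\alpha\ell$. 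The paper proves \ref{p.geodesic.in} by embedding the point-to-$(t,x)$ problem into the $(2,0)$ problem and invoking uniqueness from \ref{p.geodesic.00}, and proves \ref{p.geodesic.out} by a last-exit decomposition combined with the $\Csp^1$-matching lemma. You instead bypass the entire regularity apparatus by the pointwise completion-of-the-square identity built on $\ell''=-r^2/\pi$ (equivalently $a'+a^2+r^3/\pi=0$). That identity, after the boundary-term analysis at the singular endpoints, yields a direct lower bound $\energy(\Path;2,0)\ge\tfrac1{2\pi}\int_0^2 r=1$ with an exact equality characterization $\Path'=a\Path$ and $|\Path|\le\ell$, which is \ref{p.geodesic.00} in one stroke; for \ref{p.geodesic.in} the surviving boundary term $\tfrac12a(t)x^2$ gives both the geodesic and the value $\hm(t,x)$; and for \ref{p.geodesic.out} you also use a last-exit decomposition but then solve the resulting concave-obstacle problem explicitly and verify via the integration-by-parts identity $\int\tfrac12(\ell')^2=\tfrac12[\ell\ell']+\tfrac1{2\pi}\int r$ that the glued bounds telescope to $g(t_*)$, with $g'\ge0$ forcing $\sigma=t_*$.

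What the two approaches buy: the paper's method is geometric and localizes well (it cleanly separates the regularity of a generic minimizer from the global structure via Lemma~\ref{l.geodesic}), while yours is more algebraic and global, and yields the explicit formula for $\hm(t,x)$ on $\Omega$ as a byproduct without any auxiliary work. The one place where your write-up leans on something not fully spelled out is the asymptotic $a(s)\sim\tfrac23 s^{-1}$ near the endpoints; this does follow from \eqref{e.r.ode} (giving $r(s)\asymp s^{-2/3}$), but it is worth recording explicitly since the vanishing of the boundary term $\tfrac12 a(\delta)\Path(\delta)^2$ is exactly at the edge of what the $o(s)$ decay of $\Path(s)^2$ provides. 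Also note a small typo-level mismatch with the paper: Lemma~\ref{l.geodesic}\ref{l.geodesic.energy} as printed says $\energy(\alpha\ell;2,0)=-1$, but since $\energy\ge0$ this must mean $\energy(\alpha\ell;2,0)=1$ (consistent with \eqref{e.after.jensen.}), which is exactly what your identity produces.
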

See Figure~\ref{f.geodesics} for an illustration for these geodesics.

\begin{figure}[h]
\includegraphics[width=.5\linewidth]{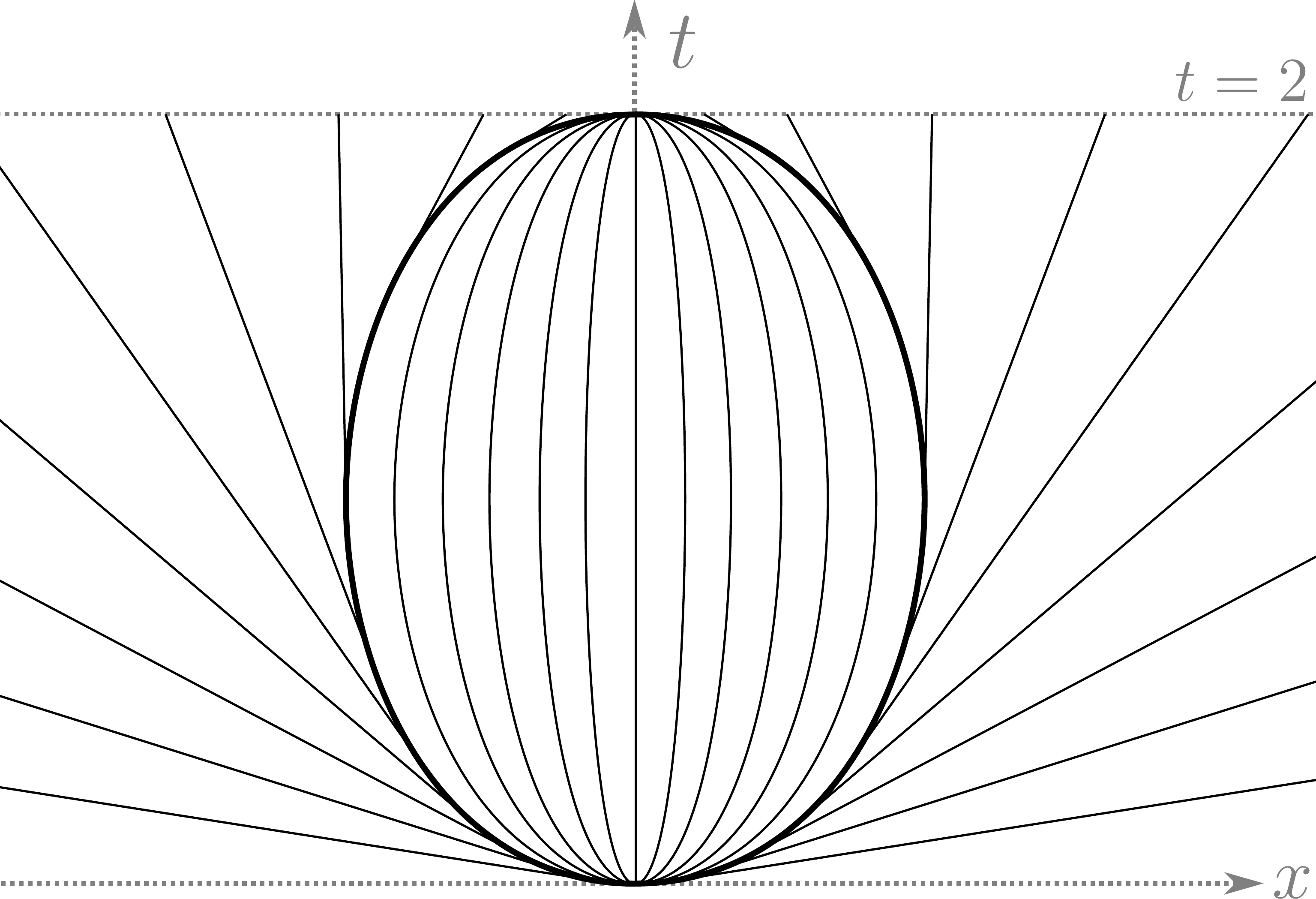}
\caption{%
The solid curves are the geodesics for \protect\eqref{e.minimizing.path},
with the thick ones being $ \pm\ell(\protect\Cdot) $.
Those geodesics outside $ \pm\ell(\protect\Cdot) $ are linear, and touch $ \pm\ell(\protect\Cdot) $ at tangent.%
}
\label{f.geodesics}
\end{figure}

\begin{rmk}
\label{r.geodesic}
An intriguing feature of Proposition~\ref{p.geodesic}\ref{p.geodesic.00} is the \emph{nonuniqueness} of the geodesics between $ (0,0) $ and $ (2,0) $.
For any $ |\alpha| \leq 1 $, $ \Path=\alpha\ell $ is one such geodesic, so the paths span a lens-shaped region $ \Omega $.
For the exponential \ac{LPP}, \cite{basu19} proved that the point-to-point geodesic (in the context of \ac{LPP}) does not concentrate around any given path under a lower-tail conditioning.
Though the setups differ, the result of \cite{basu19} and Proposition~\ref{p.geodesic}\ref{p.geodesic.00} are consistent.
It is an intriguing question to explore deeper connection between these two phenomena.
For example, is it true that for \ac{LPP} under lower-tail conditioning, the distribution of the geodesic spans a lens-like region?
\end{rmk}

To streamline the proof of Proposition~\ref{p.geodesic}, let us prepare a few technical tools.
The Euler--Lagrangian equation for \eqref{e.energy} is
\begin{align}
\label{e.euler.lang}
\Path'' = - \partial_x \devm(s,\Path(s))
=
\left\{\begin{array}{l@{,}l}
- \tfrac{r(s) }{\pi \ell(s)^2} \Path & \text{ when } (s,\Path(s)) \in \Omega^\circ,
\\
0	& \text { when } (s,\Path(s)) \in \Omega^\mathrm{c}.
\end{array}\right.
\end{align}
The equation~\eqref{e.euler.lang} is ambiguous when $ (s,\Path(s)) \in \partial\Omega $ because $ \partial_x\devm $ is not continuous there. 
We will avoid referencing \eqref{e.euler.lang} when $ (s,\Path(s)) \in \partial\Omega $.
It will be convenient to also consider 
\begin{align}
\label{e.euler.lang.}
\Path'' = - \tfrac{r(s) }{\pi \ell(s)^2} \Path,
\end{align}
which coincides with \eqref{e.euler.lang} in $ \Omega^\circ $.
\begin{lem}
\label{l.geodesic}
\begin{enumerate}[leftmargin=20pt, label=(\alph*)]
\item[]
\item\label{l.geodesic.concave} The function $ \ell $ is strictly concave and $ \lim_{s\downarrow 0} |\ell'(s)| = +\infty $.
\item\label{l.geodesic.euler.lang.} For any $ \alpha\in\R $, the function $ \alpha\ell(s) $ solves \eqref{e.euler.lang.} for $ s\in(0,2) $.
\item\label{l.geodesic.energy} For any for any $ |\alpha|\leq 1 $, $ \energy(\alpha\ell;2,0) = -1 $. 
\item\label{l.geodesic.euler.lang} In $ (\partial\Omega)^\mathrm{c} $, any geodesic of \eqref{e.minimizing.path} is $ \Csp^2 $ and solves \eqref{e.euler.lang}.
\item\label{l.geodesic.within} When $ (t,x)\in\Omega $, any geodesic of \eqref{e.minimizing.path} lies entirely in $ \Omega $.
\item\label{l.geodesic.C1} Let $ \Path \in \Hsp^1_{0,x}[0,t] $ be a geodesic of \eqref{e.minimizing.path},
and consider $ (t_*,\Path(t_*)) \in \partial \Omega $ with $ t_*\in(0,t) $. Then
\begin{align*}
\lim\limits_{\beta\downarrow 0 } \Big( \frac{1}{\beta} \int_{t_*}^{t_*+\beta} \Path'(s)\d s - \frac{1}{\beta} \int_{t_*-\beta}^{t_*} \Path'(s)\d s \Big)
=
0.
\end{align*}	
\end{enumerate}
\end{lem}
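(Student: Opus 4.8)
The plan is to prove the six items in the order listed, since \ref{l.geodesic.euler.lang} feeds into \ref{l.geodesic.within}, while \ref{l.geodesic.C1} will be handled separately by a competitor estimate. Items \ref{l.geodesic.concave}--\ref{l.geodesic.energy} are explicit computations with $r$ and $\ell=1/r$. Differentiating $\ell=1/r$ and using \eqref{e.r.ode} gives $\ell'(s)=-(2/\pi)^{1/2}(r(s)-\pi/2)^{1/2}$ on $(1,2)$, and differentiating once more (again via \eqref{e.r.ode}) gives $\ell''(s)=-r(s)^2/\pi=-1/(\pi\ell(s)^2)$; the symmetry $\ell(s)=\ell(2-s)$ extends these to $(0,1)$, and $\ell''(1)=-\pi/4$, so $\ell\in\Csp^2(0,2)$ with $\ell''<0$, i.e.\ $\ell$ is strictly concave, while $r(s)\to+\infty$ as $s\downarrow0$ gives $|\ell'(s)|\to+\infty$; this is \ref{l.geodesic.concave}. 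Since $r=1/\ell$, the identity $\ell''=-1/(\pi\ell^2)$ reads $\ell''=-\tfrac{r}{\pi\ell^2}\ell$, so $\ell$, hence $\alpha\ell$ by linearity, solves \eqref{e.euler.lang.}, giving \ref{l.geodesic.euler.lang.}. For \ref{l.geodesic.energy}: $\alpha\ell\in\Hsp^1_{0,0}[0,2]$ because $\int_0^2\ell'(s)^2\,\d s=\tfrac2\pi\int_0^2(r(s)-\pi/2)\,\d s=\tfrac2\pi(2\pi-\pi)=2$ by \eqref{e.r.int}, and on its graph $\devm(s,\alpha\ell(s))=-\tfrac{r(s)}{2\pi}(1-\alpha^2)$, so $\energy(\alpha\ell;2,0)=\tfrac{\alpha^2}{2}\cdot2+\tfrac{1-\alpha^2}{2\pi}\int_0^2 r(s)\,\d s=\alpha^2+(1-\alpha^2)=1$ by \eqref{e.r.int}, the value asserted in \ref{l.geodesic.energy}.

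For \ref{l.geodesic.euler.lang}, fix $s_0$ with $(s_0,\Path(s_0))\notin\partial\Omega$; by continuity of $\Path$ and openness of $(\partial\Omega)^\mathrm{c}$, on some open interval $J\ni s_0$ the curve $(s,\Path(s))$ stays in $(\partial\Omega)^\mathrm{c}$, hence, by connectedness, entirely in $\Omega^\circ$ or entirely in $\R^2\setminus\Omega$. Testing minimality against $\Path+\e\phi$ with $\phi\in\Csp_c^\infty(J)$ — which for $|\e|$ small stay in the same region on $\mathrm{supp}\,\phi$, where $\devm$ is $\Csp^1$ (or identically $0$) — yields the weak Euler--Lagrange identity $\int_J\Path'\phi'=\int_J\bigl(-\partial_x\devm(s,\Path(s))\bigr)\phi$; since $s\mapsto\partial_x\devm(s,\Path(s))$ is continuous on $J$ (equal to $\tfrac{r(s)}{\pi\ell(s)^2}\Path(s)$ inside $\Omega^\circ$ and $0$ outside), $\Path\in\Csp^2(J)$ and $\Path''=-\partial_x\devm(s,\Path(s))$, which is \eqref{e.euler.lang}. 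For \ref{l.geodesic.within}, suppose a geodesic $\Path$ leaves $\Omega$, say $\Path(s_1)>\ell(s_1)$ for some $s_1\in(0,t)$ (the case $\Path<-\ell$ is symmetric). Let $(a,b)$ be the maximal subinterval of $(0,t)$ through $s_1$ on which $\Path>\ell$; continuity and maximality force $\Path(a)=\ell(a)$ and $\Path(b)=\ell(b)$ ($\Path(0)=0=\ell(0)$ if $a=0$, and $|x|\le\ell(t)$ forces $\Path(t)=\ell(t)$ if $b=t$). On $(a,b)$ the curve avoids $\partial\Omega$, so by \ref{l.geodesic.euler.lang} $\Path''=0$, i.e.\ $\Path$ is affine on $[a,b]$: it is the chord of the graph of $\ell$ joining $(a,\ell(a))$ and $(b,\ell(b))$. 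If $a,b\in(0,2)$ this contradicts strict concavity (\ref{l.geodesic.concave}), since a chord then lies strictly below the graph on $(a,b)$; if $a=0$ or $b=2$, it contradicts $|\ell'|\to\infty$ there, the chord having bounded slope relative to $s$ (resp.\ $2-s$) while $\ell(s)/s\to+\infty$ as $s\downarrow0$ (resp.\ $\ell(s)/(2-s)\to+\infty$ as $s\uparrow2$). Hence $\Path$ stays in $\Omega$.

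For \ref{l.geodesic.C1}, fix $\beta$ small enough that $[t_*-\beta,t_*+\beta]\subset(0,t)$, put $m:=\Path(t_*+\beta)-\Path(t_*-\beta)$ and $\bar v:=m/(2\beta)$, and let $\til\Path_\beta\in\Hsp^1_{0,x}[0,t]$ agree with $\Path$ off $[t_*-\beta,t_*+\beta]$ and be affine with slope $\bar v$ on it. With $V_\beta$ the variance of $\Path'$ under the uniform probability measure on $[t_*-\beta,t_*+\beta]$ (so $\bar v$ is its mean), $\int_{t_*-\beta}^{t_*+\beta}\tfrac12(\Path')^2-\int_{t_*-\beta}^{t_*+\beta}\tfrac12(\til\Path_\beta')^2=\beta V_\beta$. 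Since $t_*\in(0,2)$, $y\mapsto\devm(s,y)$ is Lipschitz with a uniform constant for $s$ in a fixed neighbourhood of $t_*$ (as $r$ and $1/\ell$ are bounded there), and $|\Path(s)-\til\Path_\beta(s)|=o(\beta^{1/2})$ on $[t_*-\beta,t_*+\beta]$ (both stay $o(\beta^{1/2})$-close to $\Path(t_*-\beta)$ because $\int_{t_*-\beta}^{t_*+\beta}(\Path')^2\to0$), so the potential parts of $\energy(\Path;t,x)$ and $\energy(\til\Path_\beta;t,x)$ differ by $o(\beta^{3/2})=o(\beta)$. Minimality of $\Path$ then forces $\beta V_\beta\le o(\beta)$, so $V_\beta\to0$. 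Writing $A_\beta:=\tfrac1\beta\int_{t_*}^{t_*+\beta}\Path'$ and $B_\beta:=\tfrac1\beta\int_{t_*-\beta}^{t_*}\Path'$, Jensen's inequality on the two half-intervals gives $V_\beta\ge\tfrac12(A_\beta^2+B_\beta^2)-\tfrac14(A_\beta+B_\beta)^2=\tfrac14(A_\beta-B_\beta)^2$, whence $A_\beta-B_\beta\to0$, which is exactly the assertion of \ref{l.geodesic.C1}.

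The main obstacle is not any single item but the need to control, uniformly, the two sources of non-smoothness of $\devm$: the jump of $\partial_x\devm$ across $\partial\Omega$ — which is why \ref{l.geodesic.C1} is proved through a competitor estimate rather than through an Euler--Lagrange equation at points of $\partial\Omega$, and why \ref{l.geodesic.euler.lang} is stated only off $\partial\Omega$ — and the blow-up of $r$, hence of $\devm$, as $s\to\{0,2\}$, which forces the separate endpoint discussion in \ref{l.geodesic.within} and makes the restriction $t_*\in(0,2)$ essential in \ref{l.geodesic.C1}.
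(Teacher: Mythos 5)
Your proof is correct. Parts \ref{l.geodesic.concave}--\ref{l.geodesic.euler.lang} essentially match the paper's (terse) verification: the paper simply says these "follow by straightforward calculations" and that \ref{l.geodesic.euler.lang} "follows by standard variation procedure"; you make the computations and the two-region localization argument explicit. For \ref{l.geodesic.within}, the paper takes a shorter route: it directly establishes the Dirichlet-energy comparison $\int_{t_1}^{t_2}\Path'^2 > \int_{t_1}^{t_2}\ell'^2$ (via strict concavity, effectively an integration by parts $\int\ell'u' = -\int\ell''u > 0$ with $u=\Path-\ell\geq 0$), then replaces the offending segment by $\pm\ell$; it never needs the geodesic to be affine outside $\Omega$. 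You instead feed \ref{l.geodesic.euler.lang} in first, deduce the segment is the chord, and contradict strict concavity — a genuinely different organization that arguably handles the endpoint cases $a=0$, $b=t$ (where the paper's boundary term in the integration by parts is a delicate $\infty\cdot 0$) more cleanly, via $\ell(s)/s\to+\infty$. For \ref{l.geodesic.C1}, the paper's proof is a direct variation giving $|\int\Path' f'| \leq c\int|f|$ for $f\in\Csp^\infty_c$, followed by a translation-averaging trick $f(s)\mapsto f(s+u)$, $u\in[-\tfrac12\beta,\tfrac12\beta]$, to remove the derivative on $f$, a density extension to $f\in\Lsp^2$, and specialization to $f=\pm\ind_{(t_*-\frac12\beta,t_*+\frac12\beta)}$. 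Your competitor/variance argument — showing the conditional variance $V_\beta$ of $\Path'$ near $t_*$ tends to $0$ and extracting $A_\beta-B_\beta\to 0$ from Jensen — is a genuinely different mechanism, avoids both the averaging lemma and the density step, and gives the same conclusion. One small discrepancy worth noting: your computation correctly yields $\energy(\alpha\ell;2,0)=1$, whereas the lemma as stated says $-1$; this is a sign typo in the paper, confirmed by \eqref{e.after.jensen.} and by \eqref{e.hm20}, which read $\hm(2,0)=-\energy(\alpha\ell;2,0)=-1$.
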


\begin{proof}
Parts~\ref{l.geodesic.concave}--\ref{l.geodesic.energy} follow by straightforward calculations from $ \ell(s)=1/r(s) $, \eqref{e.r.ode}, and \eqref{e.r.int}.
Part~\ref{l.geodesic.euler.lang} follows by standard variation procedure.

\ref{l.geodesic.within}
The geodesic $ \Path $ starts and ends within $ \Omega $, i.e., $ (0,\Path(0)) =(0,0)\in\Omega $ and $ (t,\Path(t)) = (t,x) \in \Omega $.
If the geodesic ever leaves $ \Omega $,
then there exists $ t_1<t_2\in[0,t] $ such that $ \Path|_{(t_1,t_2)} $ lies outside $ \Omega $ 
and $ (t_i,\Path(t_i)) \in \partial \Omega $ for $ i=1,2 $.
See Figure~\ref{f.geo.outside} for an illustration.
Let us compare the functional $ \energy(\Cdot;t,x) $ (c.f., \eqref{e.energy}) restricted onto the segments $ \Path|_{[t_1,t_2]} $ and $ \pm\ell|_{[t_1,t_2]} $,
where the $ \pm $ sign depends on which side of the boundary $ (t_1,\Path(t_1)) $ and $ (t_2,\Path(t_2)) $ belong to, c.f., Figure~\ref{f.geo.outside}.
First $ \devm $ vanishes along both segments. 
Next, the strict concavity of $ \ell $ from Part~\ref{l.geodesic.concave} implies $ \int_{t_1}^{t_2} \Path'(s)^2 \d s > \int_{t_1}^{t_2} \ell'(s)^2 \d s $.
Therefore, we can modify $ \Path $ by replacing the segment $ \Path|_{[t_1,t_2]} $ with $ \pm\ell|_{[t_1,t_2]} $ to decreases the value of $ \energy(\Path;2,0) $.
This contradicts with assumption that $ \Path $ is a geodesic. 
Hence the geodesic must stay completely within $ \Omega $.

\begin{figure}
\centering
\begin{minipage}{0.35\textwidth}
	\includegraphics[width=\textwidth]{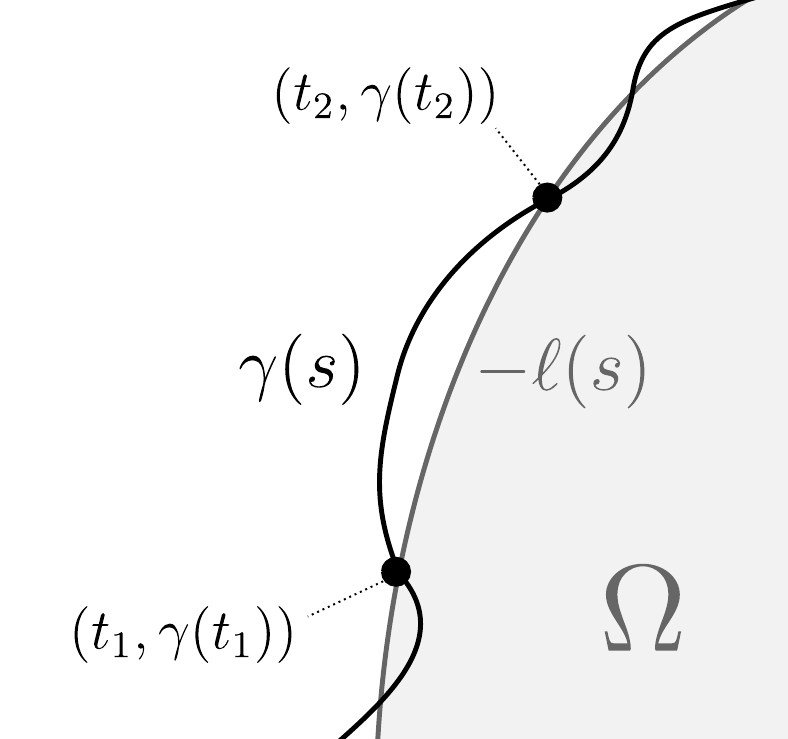}
\end{minipage}
\hspace{20pt}
\begin{minipage}{0.35\textwidth}
	\includegraphics[width=\textwidth]{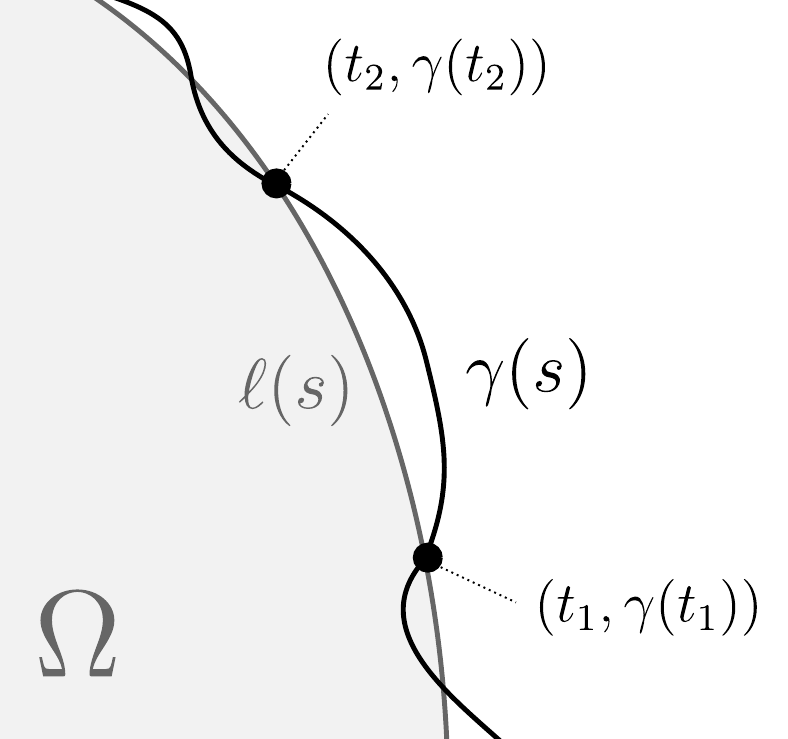}
\end{minipage}
\caption{Illustration of Part~\protect\ref{l.geodesic.within} of the proof of Lemma~\protect\ref{l.geodesic}}
\label{f.geo.outside}
\end{figure}


\ref{l.geodesic.C1} The idea is to perform variation.
Fix a neighborhood $ O $ of $ t_* $ with $ \bar{O}\subset(0,2) $. For $ f\in\Csp^\infty_c(O) $ consider
\begin{align*}
F(\alpha):=
\int_0^t \tfrac{1}{2}(\Path'+\alpha f')^2 - \devm(s,\Path+\alpha f) \ \d s.
\end{align*} 
The derivative $ \partial_x\devm $ is bounded on $ \bar{O}\times\R $ (even though not continuous).
Taylor expanding $ F $ around $ \alpha=0 $ then gives
$ \int \Path'(s) f'(s) \d s \leq c \int |f(s)| \d s $, for some constant $ c<\infty $.
Within the last inequality,
substitute $ f(s)\mapsto f(s+u) $, integrate the result over $ u\in[-\frac12\beta,\frac12\beta] $, and divide both sides by $ \beta $.
This gives
\begin{align*}
\frac{1}{\beta} \int \Path'(s) (f(s+\tfrac12\beta)-f(s-\tfrac12\beta)) \d s 
=
\frac{1}{\beta} \int (\Path'(s-\tfrac12 \beta) -\Path'(s+\tfrac12 \beta)) f(s) \d s 
\leq
c \int |f(s)|\d s.
\end{align*}
This inequality holds for smooth $ f(s) $ supported in $ \{s:s\pm\frac12\beta \in O\} $.
Since $ \Path'\in\Lsp^2[0,t] $, the equality extends to $ f\in\Lsp^2 $.
Specializing $ f= \pm\ind_{(t_*-\frac12\beta,t_*+\frac12\beta)}$
and taking $ \beta \downarrow 0 $ gives the desired result.
\end{proof}

\begin{proof}[Proof of Proposition~\ref{p.geodesic}] 
\ref{p.geodesic.exists}
The proof follows from standard argument of the direct method.
Take any minimizing sequence $ \{ \Path_n \} $.
For such a sequence, $ \{\Path'_n\} $ is bounded in $ \Lsp^2[0,t] $.
By the Banach--Alaoglu theorem, after passing to a subsequence we have $ \Path'_n \to \cuttime \in \Lsp^2[0,t] $ weakly in $ \Lsp^2[0,t] $.
Let $ \Path(\bar{s}) := \int_0^{\bar{s}} \cuttime(s) \d s $. 
We then have $ \Path_n \to \Path $ in $ \Csp_{0,x}[0,t] $ and $ \int_0^t \Path'(s)^2 \d s = \normL{\cuttime}^2 \leq \lim_{n}\normL{\Path'_n}^2 $.
Also, by Property~\ref{l.devm.varadhan.conti} in the proof of Lemma~\ref{l.devm.varadhan},
$ \int_0^t \devm(s,\Path_n(s))\d s \to \int_0^t \devm(s,\Path(s))\d s  $.
We have verified that $ \Path \in \Hsp^1_{0,x}[0,t] $ a geodesic.

\ref{p.geodesic.00}
The proof amounts to showing that any geodesic must be of the form $ \alpha\ell $, for some $ |\alpha|\leq 1 $.
Once this is done, Lemma~\ref{l.geodesic}\ref{l.geodesic.energy} guarantees that any such path is a geodesic.

We begin with a reduction.
For a geodesic $ \Path\in\Hsp^1_{0,0}[0,2] $,
consider its first and second halves $ \Path_1 := \Path|_{[0,1]} $ and $ \Path_2(s) := \Path(2-s)|_{s\in[0,1]} $.
Joining each half with itself end-to-end gives the symmetric paths $ \bar{\Path}_i(s) := \Path_i(s)\ind_{[0,1]}(s) + \Path_i(s-1)\ind_{(1,2]}(s) $, for $ s\in[0,2] $ and $ i=1,2 $.
These symmetrized paths are also geodesics.
To see why, note that since $ \devm(s,y) $ is symmetric around $ s=1 $, we have $ \energy(\bar{\Path}_i;2,0) = 2 \energy(\Path_{i};1,\Path(1)) $, for $ i=1,2 $,
and $ \energy(\Path;2,0) = \energy(\Path_{1};1,\Path(1)) + \energy(\Path_{2};1,\Path(1)) $.
On the other hand, $ \Path $ being a geodesic implies $ \energy(\Path;2,0) \leq \energy(\bar{\Path}_i;2,0) $, for $ i=1,2 $.
From the these relations we infer that $ \energy(\bar{\Path}_1;2,0)=\energy(\bar{\Path}_2;2,0) = \energy(\Path;2,0) $,
namely, the symmetrized paths $ \bar{\Path}_1 $ and $ \bar{\Path}_2 $ are also geodesics.
Recall that our goal is to show any geodesic must be of the form $ \alpha\ell $, for some $ |\alpha|\leq 1 $.
If we can establish the statement for $ \bar{\Path}_1 $ and $ \bar{\Path}_2 $, the same immediately follows for $ \Path $. 
Hence, without loss of generality, hereafter we consider only symmetric geodesics.

Fix a geodesic $ \Path\in\Hsp^1_{0,0}[0,2] $.
As argued in the preceding paragraph, we can and shall assume $ \Path(s) $ is symmetric around $ s=1 $,
and by Lemma~\ref{l.geodesic}\ref{l.geodesic.within} the path lies entirely in $ \Omega $.
The last condition implies $ |\Path(1)| \leq \ell(1) $.
Consider first the case $ |\Path(1)| < \ell(1) $. 
By Lemma~\ref{l.geodesic}\ref{l.geodesic.euler.lang},
within a neighborhood of $ s=1 $ the path $ \Path(s) $ is $ \Csp^2 $ and solves \eqref{e.euler.lang} and therefore \eqref{e.euler.lang.}.
The symmetry of $ \Path $ gives $ \Path'(1)=0 $.
The uniqueness of the ODE \eqref{e.euler.lang.} and Lemma~\ref{l.geodesic}\ref{l.geodesic.euler.lang.} now imply $ \Path(s) = \alpha \ell(s) $,
for $ \alpha = \Path(1)/\ell(1) $ and for all $ s $ in a neighborhood of $ s=1 $.
This matching $ \Path(s) = \alpha \ell(s) $ extends to $ s\in(0,2) $ by standard continuity argument.
This concludes the desired result for the case $ |\Path(1)| < \ell(1) $.

Turning to the case $ |\Path(1)|=\ell(1) $, we need to show $ \Path=\pm\ell $. 
Let us argue by contradiction.
Assuming the contrary, we can find $ t_2\in(0,1)\cup(1,2) $ such that $ (t_2,\Path(t_2)) \in \Omega^\circ $. By the symmetry of $ \Path $ around $ s=1 $ we can and shall assume $ t_2\in(1,2) $.
Tracking along $ \Path $ backward in time from $ t_2 $, we let $ t_* := \inf\{ s\in[0,t_*] : |\Path(s)| < \ell(s) \} $ be the first hitting time of $ \partial\Omega $.
Indeed $ t_* \in [1,t_2) $ and $ \Path(t_*) = \pm \ell(t_*) $.
Let us take `$ + $' for simplicity of notation; 
see Figure~\ref{f.geo.touching} for an illustration.
The case for `$ - $' can be treated by the same argument.
By Lemma~\ref{l.geodesic}\ref{l.geodesic.euler.lang}, $ \Path|_{(t_*,t_2)} $ solves \eqref{e.euler.lang} and therefore \eqref{e.euler.lang.}.
On the other hand, $ \ell $ also solves \eqref{e.euler.lang.} by Lemma~\ref{l.geodesic}\ref{l.geodesic.euler.lang.}.
These facts along with the well-posedness of \eqref{e.euler.lang.} at $ (t_*,\ell(t_*)) $ imply that $ \Path|_{[t_*,t_2)} \in \Csp^2[t_*,t_2) $
and $ \lim_{\beta\downarrow 0}\Path'(t_*+\beta) \neq \ell'(t_*) $.
Either `$ < $' or `$ > $' holds between these two quantities.
The property $ \{(t,\Path(t))\}_{t\in(t_*,t_2)} \subset \Omega^\circ $ tells us that it is `$ < $', namely $ \lim_{\beta\downarrow 0}\Path'(t_*+\beta) < \ell'(t_*) $.
Combining this inequality with Lemma~\ref{l.geodesic}\ref{l.geodesic.C1} gives
$ \lim_{\beta\downarrow 0} \frac{1}{\beta}\int_{t_*-\beta}^{t_*} \Path'(s) \d s = \lim_{\beta\downarrow 0} \frac{1}{\beta}(\ell(t_*)-\Path(t_*-\beta)) < \ell'(t_*) $.
Recall from Lemma~\ref{l.geodesic}\ref{l.geodesic.concave} that $ \ell $ is concave.
The last inequality then forces $ \Path(t_*-\beta) > \ell(t_*-\beta) $ for all small enough $ \beta>0 $.
This statement contradicts with the fact that $ \Path $ lies within $ \Omega $.
We have reached a contradiction and hence completed the proof for the case $ |\Path(1)|=\ell(1) $. 

\begin{figure}[h]
\centering
\includegraphics[width=.25\textwidth]{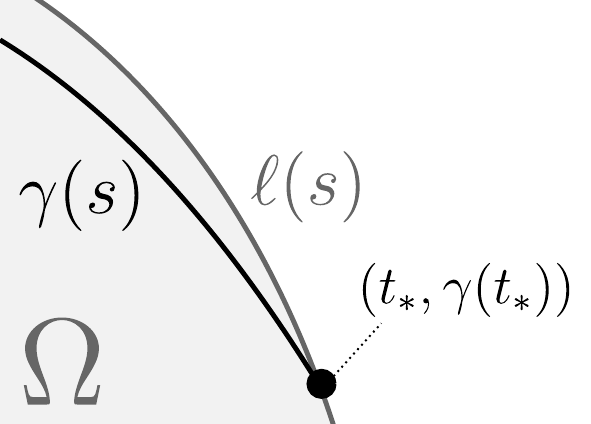}
\caption{Illustration of Part~\protect\ref{p.geodesic.00} of the proof of Proposition~\protect\ref{p.geodesic}.
Only the portion $ s\geq t_* $ of the curve $ \Path(s) $ is shown.%
}
\label{f.geo.touching}
\end{figure}

\ref{p.geodesic.in}
Our goal is to characterize the geodesic between $ (0,0) $ and $ (t,x) $.
The idea is to `embed' such a minimization problem into a minimization problem between $ (0,0) $ and $ (2,0) $.
More precisely consider
\begin{align}
\label{e.mini.tx}
\inf\big\{ \energy(\Path;2,0) : \Path \in \Hsp^1_{0,x}[0,2], \ \Path(t)=x  \big\}.
\end{align}
The infimum is taken over all $ \Hsp^1 $ path that joins $ (0,0) $ and $ (2,0) $ \emph{and} passes through $ (t,x) $.
Such an infimum can be divided into two parts as
\begin{align}
\label{e.mini.tx.}
\eqref{e.mini.tx}
=
\inf\big\{ \energy(\Path;t,x) : \Path \in \Hsp^1_{0,x}[0,t]  \big\}
+
\inf\Big\{ \int_t^2 \tfrac{1}{2} \Path'(s)^2 - \devm(s,\Path(s)) \ \d s : \Path \in \Hsp^1_{x,0}[t,2]  \Big\}.
\end{align}
Take any geodesic $ \Path \in \Hsp^1_{0,x}[0,t]  $ for the first infimum in \eqref{e.mini.tx.} and any geodesic $ \bar{\Path} \in \Hsp^1_{x,0}[t,2] $ for the second infimum in \eqref{e.mini.tx.}.
(The existence of such geodesics can be established by the same argument in Part~\ref{p.geodesic.exists}.)
The concatenated path $ \Path_\text{c}(s) := \Path(s)\ind_{s\in[0,t]} + \bar{\Path}(s)\ind_{s\in(t,2]} $ is a geodesic for \eqref{e.mini.tx}.
Hence $ \energy(\Path_\text{c};2,0) \geq \energy(\til\Path;2,0) $, for any $ \til\Path \in \Hsp^1_{0,0}[0,2] $ that passes through $ (t,x) $.
Set $ \alpha=x/\ell(t) $. The last inequality holds in particular for $ \til\Path= \alpha\ell $.
On the other hand, under current assumption $ (t,x)\in\Omega $, we have $ |\alpha| \leq 1 $,
so Part~\ref{p.geodesic.00} asserts that $ \alpha\ell $ minimizes \eqref{e.mini.tx} even \emph{without} the constraint $ \Path(t)=x $.
Therefore, $ \energy(\Path_\text{c};2,0) = \energy(\alpha\ell;2,0) $, and $ \Path_\text{c} $ itself is a geodesic for 
$ \inf\{ \energy(\Cdot;0,2) : \til\Path \in \Hsp^1_{0,0}[0,2] \} $.
The last statement and Part~\ref{p.geodesic.00} force $ \Path_\text{c} = \alpha\ell $, which concludes the desired result.

\ref{p.geodesic.out}
Fix a geodesic $ \Path\in\Hsp^1_{0,x}[0,t] $.
By Lemma~\ref{l.geodesic}\ref{l.geodesic.euler.lang} and the fact that $ (\partial_x \devm)|_{\Omega^\mathrm{c}} = 0 $, the path $ \Path $ is linear outside $ \Omega $.
Tracking along $ \Path $ backward in time from $ t $,
we let $ t_* := \inf\{ s\in[0,t] : |\Path(s)| > \ell(s) \} > 0 $ be the first hitting time of the boundary.
By Lemma~\ref{l.geodesic}\ref{l.geodesic.concave} must have $ t_* > 0 $.
The segment $ \Path|_{[0,t_*]} $ is itself is a geodesic for $ \energy(\Cdot;t_*,\Path(t_*)) $.
Since $ (t_*,\Path(t_*))=(t_*,\pm\ell(t_*)) \in \Omega $,
Part~\ref{p.geodesic.in} implies that $ \Path|_{[0,t_*]} = \pm \ell|_{[0,t_*]} $.
The path $ \Path $ is $ \Csp^1 $ except possibly at $ s=t_* $, 
but Lemma~\ref{l.geodesic}\ref{l.geodesic.C1} guarantees that $ \Path(s) $ is also $ \Csp^1 $ at $ s=t_* $.
For the given $ (t,x)\in\Omega^\mathrm{c} $, there is exactly one $ t_*\in(0,t) $ that satisfies all the prescribed properties, so we have identified the unique geodesic $ \Path $.
\end{proof}

Given Lemma~\ref{l.devm.varadhan} and Proposition~\ref{p.geodesic},
it is possible to evaluate $ \hm(t,x) $ by calculating $ \energy(\Path;t,x) $ along the geodesic(s) given in Proposition~\ref{p.geodesic}.
In particular, Proposition~\ref{p.geodesic}\ref{p.geodesic.00} and Lemma~\ref{l.geodesic}\ref{l.geodesic.energy} gives
\begin{align}
\label{e.hm20}
	\hm(2,0)
	:=
	\lim_{\scl\to\infty} \hfn_\scl(\devm;2,0)
	=
	-1.
\end{align}
Also, straightforward calculations from \eqref{e.devm} (with the help of \eqref{e.r.int}) gives $ \frac{1}{2} \normL{\devm}^2 = \frac{4}{15\pi} $.

We are now ready to prove one side of the inequalities in \eqref{e.deep.lower}, namely
\begin{align}
	\label{e.deep.lower<}
	\limsup_{\scl\to\infty}
	\big( \inf\big\{ \tfrac12 \normL{ \dev }^2  : \, \hfn_\scl(\dev;2,0) \leq -1 \big\} \big)
	\leq
	\tfrac{1}{2} \normL{\devm}^2 
	=
	\tfrac{4}{15\pi}.	
\end{align}
To show \eqref{e.deep.lower<} we would like to have $ \hfn_\scl(\devm;2,0) \leq -1 $ for all large enough $ \scl $,
but \eqref{e.hm20} only gives the inequality for $ \scl=+\infty $.
We circumvent this issue by scaling.
Fix $ \kappa>1 $ and let $ (\devm)_\kappa(t,x) := \kappa\devm(t,\kappa^{1/2}x) $.
Referring to the scaling from \eqref{e.feynmankac} to \eqref{e.feynmankac.scl}, we see that $  \hfn_\scl((\devm)_\kappa;2,0) = \kappa \hfn_\scl(\devm;2,0) $.
This identity together with \eqref{e.hm20} implies $ \hfn_\scl((\devm)_\kappa;2,0) < -1 $ for all large enough $ \scl $.
On the other hand, $ \frac12\normL{(\devm)_\kappa}^2 = \frac{\kappa^{5/2}}2\normL{\devm}^2 $,
so the left hand side of \eqref{e.deep.lower<} is at most $ \frac{\kappa^{5/2}}2\normL{\devm}^2 $.
Letting $ \kappa\downarrow 1 $ concludes \eqref{e.deep.lower<}.

\subsubsection{The reverse inequality}
\label{s.showing>}
To prove \eqref{e.deep.lower}, it now remains only to show the reverse inequality.
Fix any $ \dev \in \Lsp^2([0,2]\times\R) $ with $ \hfn_\scl(\dev;2,0) \leq -1 $.

The first step is to relate $ \hfn_\scl(\dev;2,0) $ to the functional $ \energy(\Path;2,0) $, c.f., \eqref{e.energy}.
Within \eqref{e.feynmankac.scl}, set $ (t,x)\mapsto (2,0) $, 
express the Brownian bridge as $ \bb(t) = \bm(t) - t \bm(2)/2 $, where $ \bb $ denotes a standard Brownian motion,
and apply the Cameron--Martin--Girsanov theorem with $ \scl^{1/2}\Path\in\Hsp^1_{0,0}[0,2] $ being the drift/shift.
The result gives
\begin{align*}
	\hfn_\scl(\dev;2,0)
	=
	- \int_0^2 \tfrac{1}{2} \Path'(t)^2 \d t
	+ \scl^{-1} \log  \E_{0 \to 0} \Big[ \exp\Big( \int_0^2 \Big(\scl\dev(t,\Path+\scl^{-\frac12} \bb) \, \d t + \scl^{\frac12}\Path'(t) \d \bm(t)\Big) \Big) \Big].
\end{align*}
Applying Jensen's inequality to the last term yields, for any $ \Path\in\Hsp^1_{0,0}[0,2] $,
\begin{align}
\label{e.after.jensen}
	-1
	\geq 
	\hfn_\scl(\dev;2,0)
	\geq
	-\scl^{-1} \log \sqrt{4\pi} 
	- 
	\int_0^2 
	\tfrac{1}{2} \Path'(t)^2
	- \E_{0 \to 0} \big[ \dev(t,\Path+\scl^{-\frac12} \bb) \big] \ \d t.
\end{align}
On the right hand side, the first term vanishes as $ \scl\to\infty $,
and the second term resemble the functional $ \energy(\Path;2,0) $.
The difference are that $ \dev $ replaces $ \devm $, 
and there is an additional expectation over $ \scl^{-\frac12} \bb $. 

We next use \eqref{e.after.jensen} to derive a useful inequality.
First, recall from Lemma~\ref{l.geodesic}\ref{l.geodesic.energy} that, for all $ |\alpha| \leq 1 $,
\begin{align}
\label{e.after.jensen.}
	-1
	=
	-\energy(\alpha\ell;2,0)
	=
	- 
	\int_0^2 
	\tfrac{1}{2} (\alpha \ell')^2
	- \devm(t,\alpha\ell) \ \d t.
\end{align}
Substitute $ \Path\mapsto \alpha\ell $ in \eqref{e.after.jensen} and subtract \eqref{e.after.jensen.} from the result.
This gives, for all $ |\alpha| \leq 1 $,
\begin{align*}
	\int_0^2 \big( \devm(t,\alpha\ell) - \E_{0 \to 0} \big[ \dev(t,\alpha\ell+\scl^{-\frac12} \bb) \big]  \big)\, \d t
	\geq
	-\scl^{-1} \log \sqrt{4\pi}.
\end{align*}
Multiply both sides by $ -\frac{1}{2\pi}(1-\alpha^2)_+ $ and integrate the result over $ \alpha\in\R $.
On the left hand side of the result, swap the integrals, multiply the integrand by $ 1=r(t)\ell(t) $, and recognize $ -\frac{r(t)}{2\pi}(1-x^2/\ell(t)^2)_+ = \devm(t,x) $.
We have
\begin{align}
\label{e.deduced}
	\int_0^2\int_\R  \devm(t,\alpha\ell) \Big( \devm(t,\alpha\ell) - \E_{0 \to 0} \big[ \dev(t,\alpha\ell+\scl^{-\frac12} \bb) \big]  \Big)\, \ell(t) \d \alpha \d t 
	\leq
	\scl^{-1} \tfrac{15}{16} \log \sqrt{4\pi}.
\end{align}

To see why \eqref{e.deduced} is useful, let us \emph{pretend} for a moment that $ \scl=+\infty $ in \eqref{e.deduced}.
The discussion in this paragraph is informal, and serves merely as a \emph{motivation} for the rest of the proof.
Informally set $ \scl=+\infty $ in \eqref{e.deduced},
and perform the change of variables $ x=\alpha\ell(t) $ on the left hand side. 
The result gives $ \ip{\devm,\devm-\dev} \leq 0 $ and hence $ \normL{\devm}^2 + \normL{\dev-\devm}^2 \leq  \normL{\dev}^2 $.
The last inequality implies $ \normL{\devm}^2  \leq \normL{\dev}^2 $, which is the desired result.

In light of the preceding discussion, we seek to develop an estimate of $ \ip{\devm,\devm-\dev} $.
To alleviate heavy notation we will often abbreviate $ \scl^{-1/2}\bb =: \textbb $.
Write 
$	
	\ip{\devm,\devm-\dev}
	=
	\int (\devm^2 - \devm \dev)(t,x) \d x \d t.
$
Within the integral add and subtract 
$ \E[\devm^2(t,x-\textbb)] $ and $ \E[\devm(t,x-\textbb)\dev(t,x)] $.
This gives $ \ip{\devm,\devm-\dev} = A_1 + A_2 + A_3 $, where
\begin{align*}
	A_1 
	&:= 
	\E \int_0^2 \int_{\R} \devm(t,x-\textbb) \big( \devm(t,x-\textbb) - \dev(t,x) \big) \ \d x \d t,
\\
	A_2
	&:=
	\E \int_0^2 \int_{\R}  \devm^2(t,x) - \devm^2(t,x-\textbb) \ \d x \d t,	
\\
	A_3
	&:=
	\E \int_0^2 \int_{\R} \big( \devm(t,x-\textbb) - \devm(t,x) \big) \dev(t,x) \ \d x \d t.	
\end{align*}
For $ A_1 $, the change of variables $ x=\alpha\ell(t) + \textbb = \alpha \ell(t) + \scl^{-1/2} \bb(t) $ reveals that $ A_1 $ is equal to the left hand side of \eqref{e.deduced}.
Hence $ A_1 \leq \scl^{-1} \frac{16}{15}\log\sqrt{4\pi} $.
The term $ A_2 $ does not depend on $ \dev $, and it is readily checked from \eqref{e.devm} that $ \lim_{\scl\to\infty} |A_2| =0 $.
As for $ A_3 $, the Cauchy--Schwarz inequality gives $ |A_3|\leq A_{31}^{1/2} \normL{ \dev } $,
where $ A_{31} := \E \int ( \devm(t,x-\textbb) - \devm(t,x) )^2 \d t \d x $.
The term $ A_{31} $ does not depend on $ \dev $, and it is readily checked from \eqref{e.devm} that $ \lim_{\scl\to\infty} |A_{31}| =0 $.
Adopt the notation $ o_\scl(1) $ for a generic quantity that depends only on $ \scl $ such that $ \lim_{\scl\to\infty}|o_\scl(1)| = 0 $.
Collecting the preceding results on $ A_1 $, $ A_2 $, and $ A_3 $ now gives
\begin{align}
	\label{e.quantitative}
	\ip{ \devm,\devm-\dev } \leq o_\scl(1) ( 1 + \normL{\dev} ).
\end{align}

Since $ \normL{\dev}^2 = \normL{\devm}^2 + \normL{\dev-\devm}^2 - 2\ip{ \devm,\devm-\dev } $,
the bound \eqref{e.quantitative} implies
$	\normL{ \devm }^2 \leq (1+o_\scl(1))\normL{\dev}^2 + o_\scl(1).$
This inequality holds for all $ \dev\in\Lsp^2 $ with $ \hfn_\scl(\dev;0,2) \leq -1 $, and $  o_\scl(1)\to 0 $ does not depend on $ \dev $.
The desired result hence follows:
\begin{align*}
\liminf_{\scl\to\infty}
	\big( \inf\big\{ \tfrac12 \normL{ \dev }^2  : \, \hfn_\scl(\dev;2,0) \leq -1 \big\} \big)
	\geq
	\tfrac12 \normL{ \devm }^2 = \tfrac{4}{15\pi}.	
\end{align*}

\bibliographystyle{alphaabbr}
\bibliography{short-time-kpz}

\newcommand{\etalchar}[1]{$^{#1}$}
\begin{thebibliography}{HLDM{\etalchar{+}}18}

\bibitem[BDM08]{budhiraja08}
A.~Budhiraja, P.~Dupuis, and V.~Maroulas.
\newblock Large deviations for infinite dimensional stochastic dynamical
  systems.
\newblock {\em Ann Probab}, pages 1390--1420, 2008.

\bibitem[BDSG{\etalchar{+}}15]{bertini15}
L.~Bertini, A.~De~Sole, D.~Gabrielli, G.~Jona-Lasinio, and C.~Landim.
\newblock Macroscopic fluctuation theory.
\newblock {\em Rev Modern Phys}, 87(2):593, 2015.

\bibitem[BGS19]{basu19}
R.~Basu, S.~Ganguly, and A.~Sly.
\newblock Delocalization of polymers in lower tail large deviation.
\newblock {\em Commun Math Phys}, 370(3):781--806, 2019.

\bibitem[CC19]{cafasso19}
M.~Cafasso and T.~Claeys.
\newblock A {R}iemann{-H}ilbert approach to the lower tail of the {KPZ}
  equation.
\newblock {\em arXiv:1910.02493}, 2019.

\bibitem[CD19]{cerrai19}
S.~Cerrai and A.~Debussche.
\newblock Large deviations for the two-dimensional stochastic
  {N}avier{--S}tokes equation with vanishing noise correlation.
\newblock In {\em Annales de l'Institut Henri Poincar{\'e}, Probabilit{\'e}s et
  Statistiques}, volume~55, pages 211--236. Institut Henri Poincar{\'e}, 2019.

\bibitem[CG20a]{corwin20general}
I.~Corwin and P.~Ghosal.
\newblock {KPZ} equation tails for general initial data.
\newblock {\em Electron J Probab}, 25, 2020.

\bibitem[CG20b]{corwin20lower}
I.~Corwin and P.~Ghosal.
\newblock Lower tail of the {KPZ} equation.
\newblock {\em Duke Math J}, 169(7):1329--1395, 2020.

\bibitem[CGK{\etalchar{+}}18]{corwin18}
I.~Corwin, P.~Ghosal, A.~Krajenbrink, P.~Le~Doussal, and L.-C. Tsai.
\newblock Coulomb-gas electrostatics controls large fluctuations of the
  {K}ardar{-P}arisi{-Z}hang equation.
\newblock {\em Phys Rev Lett}, 121(6):060201, 2018.

\bibitem[CHN16]{chen16}
L.~Chen, Y.~Hu, and D.~Nualart.
\newblock Regularity and strict positivity of densities for the nonlinear
  stochastic heat equation.
\newblock {\em arXiv:1611.03909}, 2016.

\bibitem[CM97]{chenal97}
F.~Chenal and A.~Millet.
\newblock Uniform large deviations for parabolic {SPDE}s and applications.
\newblock {\em Stochastic Process Appl}, 72(2):161--186, 1997.

\bibitem[Cor12]{corwin2012kardar}
I.~Corwin.
\newblock The {K}ardar{--P}arisi{--Z}hang equation and universality class.
\newblock {\em Random Matrices: Theory Appl}, 1(01):1130001, 2012.

\bibitem[Cor18]{Cor18}
I.~Corwin.
\newblock Exactly solving the {KPZ} equation.
\newblock {\em arXiv:1804.05721}, 2018.

\bibitem[CS19]{corwin2019}
I.~Corwin and H.~Shen.
\newblock Some recent progress in singular stochastic {PDE}s.
\newblock {\em Bull Amer Math Soc}, 57:409--454, 2019.

\bibitem[CW17]{chandra2017stochastic}
A.~Chandra and H.~Weber.
\newblock {Stochastic PDEs, regularity structures, and interacting particle
  systems}.
\newblock In {\em Annales de la facult{\'e} des sciences de Toulouse
  Math{\'e}matiques}, volume~26, pages 847--909, 2017.

\bibitem[DL98]{derrida98}
B.~Derrida and J.~L. Lebowitz.
\newblock Exact large deviation function in the asymmetric exclusion process.
\newblock {\em Phys Rev Lett}, 80(2):209, 1998.

\bibitem[DS01]{DS01}
J.-D. Deuschel and D.~W. Stroock.
\newblock {\em Large deviations}, volume 342.
\newblock American Mathematical Soc., 2001.

\bibitem[DT19]{das19}
S.~Das and L.-C. Tsai.
\newblock Fractional moments of the stochastic heat equation.
\newblock {\em arXiv:1910.09271}, 2019.

\bibitem[DZ94]{DZ94}
A.~Dembo and O.~Zeitouni.
\newblock Large deviations techniques and applications.
\newblock {\em Applications of Mathematics (New York)}, 38, 1994.

\bibitem[EK04]{elgart04}
V.~Elgart and A.~Kamenev.
\newblock Rare event statistics in reaction-diffusion systems.
\newblock {\em Phys Rev E}, 70(4):041106, 2004.

\bibitem[Eva98]{evans1998partial}
L.~C. Evans.
\newblock Partial differential equations.
\newblock {\em Graduate studies in mathematics}, 19(2), 1998.

\bibitem[FGV01]{falkovich01}
G.~Falkovich, K.~Gawedzki, and M.~Vergassola.
\newblock Particles and fields in fluid turbulence.
\newblock {\em Rev Modern Phys}, 73(4):913, 2001.

\bibitem[FKLM96]{falkovich96}
G.~Falkovich, I.~Kolokolov, V.~Lebedev, and A.~Migdal.
\newblock Instantons and intermittency.
\newblock {\em Physical Review E}, 54(5):4896, 1996.

\bibitem[Flo14]{flores14}
G.~R.~M. Flores.
\newblock On the (strict) positivity of solutions of the stochastic heat
  equation.
\newblock {\em Ann Probab}, 42(4):1635--1643, 2014.

\bibitem[Fog98]{fogedby98}
H.~C. Fogedby.
\newblock Soliton approach to the noisy burgers equation: Steepest descent
  method.
\newblock {\em Phys Rev E}, 57(5):4943, 1998.

\bibitem[FS10]{ferrari2010random}
P.~L. Ferrari and H.~Spohn.
\newblock Random growth models.
\newblock {\em arXiv:1003.0881}, 2010.

\bibitem[GGS15]{grafke15}
T.~Grafke, R.~Grauer, and T.~Sch\"{a}fer.
\newblock The instanton method and its numerical implementation in fluid
  mechanics.
\newblock {\em J Phys A: Math Theor}, 48(33):333001, 2015.

\bibitem[GIP15]{gubinelli2015paracontrolled}
M.~Gubinelli, P.~Imkeller, and N.~Perkowski.
\newblock Paracontrolled distributions and singular {PDE}s.
\newblock In {\em Forum of Mathematics, Pi}, volume~3. Cambridge University
  Press, 2015.

\bibitem[GL20]{ghosal20}
P.~Ghosal and Y.~Lin.
\newblock {Lyapunov exponents of the SHE for general initial data}.
\newblock {\em arXiv:2007.06505}, 2020.

\bibitem[Hai14]{hairer14}
M.~Hairer.
\newblock A theory of regularity structures.
\newblock {\em Invent Math}, 198(2):269--504, 2014.

\bibitem[HL66]{halperin66}
B.~Halperin and M.~Lax.
\newblock Impurity-band tails in the high-density limit. i. minimum counting
  methods.
\newblock {\em Phys Rev}, 148(2):722, 1966.

\bibitem[HL18]{hu2018asymptotics}
Y.~Hu and K.~L{\^e}.
\newblock Asymptotics of the density of parabolic {A}nderson random fields.
\newblock {\em arXiv:1801.03386}, 2018.

\bibitem[HLDM{\etalchar{+}}18]{hartmann2018high}
A.~K. Hartmann, P.~Le~Doussal, S.~N. Majumdar, A.~Rosso, and G.~Schehr.
\newblock High-precision simulation of the height distribution for the {KPZ}
  equation.
\newblock {\em EPL (Europhysics Letters)}, 121(6):67004, 2018.

\bibitem[HMS19]{hartmann2019optimal}
A.~K. Hartmann, B.~Meerson, and P.~Sasorov.
\newblock Optimal paths of nonequilibrium stochastic fields: The
  {K}ardar{-P}arisi{-Z}hang interface as a test case.
\newblock {\em Physical Review Research}, 1(3):032043, 2019.

\bibitem[HW15]{hairer15}
M.~Hairer and H.~Weber.
\newblock Large deviations for white-noise driven, nonlinear stochastic {PDE}s
  in two and three dimensions.
\newblock {\em Annales de la Facult{\'e} des sciences de Toulouse:
  Math{\'e}matiques}, 24(1):55--92, 2015.

\bibitem[KK07]{kolokolov07}
I.~Kolokolov and S.~Korshunov.
\newblock Optimal fluctuation approach to a directed polymer in a random
  medium.
\newblock {\em Phys Rev B}, 75(14):140201, 2007.

\bibitem[KK09]{kolokolov09}
I.~Kolokolov and S.~Korshunov.
\newblock Explicit solution of the optimal fluctuation problem for an elastic
  string in a random medium.
\newblock {\em Phys Rev E}, 80(3):031107, 2009.

\bibitem[KLD17]{krajenbrink17short}
A.~Krajenbrink and P.~Le~Doussal.
\newblock Exact short-time height distribution in the one-dimensional
  {K}ardar{--P}arisi{--Z}hang equation with {B}rownian initial condition.
\newblock {\em Phys Rev E}, 96(2):020102, 2017.

\bibitem[KLD18a]{krajenbrink2018large}
A.~Krajenbrink and P.~Le~Doussal.
\newblock Large fluctuations of the {KPZ} equation in a half-space.
\newblock {\em SciPost Phys}, 5:032, 2018.

\bibitem[KLD18b]{krajenbrink2018simple}
A.~Krajenbrink and P.~Le~Doussal.
\newblock {Simple derivation of the $(-\lambda H)^{5/2}$ tail for the 1{D}
  {KPZ} equation}.
\newblock {\em J Stat Mech Theory Exp}, 2018(6):063210, 2018.

\bibitem[KLD19]{krajenbrink2019linear}
A.~Krajenbrink and P.~Le~Doussal.
\newblock Linear statistics and pushed {C}oulomb gas at the edge of
  $\beta$-random matrices: Four paths to large deviations.
\newblock {\em EPL (Europhysics Letters)}, 125(2):20009, 2019.

\bibitem[KLDP18]{krajenbrink18}
A.~Krajenbrink, P.~Le~Doussal, and S.~Prolhac.
\newblock Systematic time expansion for the {K}ardar--{P}arisi--{Z}hang
  equation, linear statistics of the {GUE} at the edge and trapped fermions.
\newblock {\em Nucl Phys B}, 936:239--305, 2018.

\bibitem[KMS16]{kamenev16}
A.~Kamenev, B.~Meerson, and P.~V. Sasorov.
\newblock Short-time height distribution in the one-dimensional
  {K}ardar{--P}arisi{--Z}hang equation: Starting from a parabola.
\newblock {\em Phys Rev E}, 94(3):032108, 2016.

\bibitem[KPZ86]{kardar1986dynamic}
M.~Kardar, G.~Parisi, and Y.-C. Zhang.
\newblock Dynamic scaling of growing interfaces.
\newblock {\em Phys Rev Lett}, 56(9):889, 1986.

\bibitem[Kra19]{krajenbrink2019beyond}
A.~Krajenbrink.
\newblock {\em Beyond the typical fluctuations: a journey to the large
  deviations in the {K}ardar{-P}arisi{-Z}hang growth model}.
\newblock PhD thesis, PSL Research University, 2019.

\bibitem[LD19]{ledoussal19}
P.~Le~Doussal.
\newblock {Large deviations for the KPZ equation from the KP equation}.
\newblock {\em arXiv preprint arXiv:1910.03671}, 2019.

\bibitem[LDMRS16]{ledoussal16short}
P.~Le~Doussal, S.~N. Majumdar, A.~Rosso, and G.~Schehr.
\newblock Exact short-time height distribution in the one-dimensional
  {K}ardar{--P}arisi{--Z}hang equation and edge fermions at high temperature.
\newblock {\em Phys Rev Lett}, 117(7):070403, 2016.

\bibitem[LDMS16]{ledoussal16long}
P.~Le~Doussal, S.~N. Majumdar, and G.~Schehr.
\newblock Large deviations for the height in 1{D} {K}ardar-{P}arisi-{Z}hang
  growth at late times.
\newblock {\em EPL (Europhysics Letters)}, 113(6):60004, 2016.

\bibitem[Led96]{ledoux96}
M.~Ledoux.
\newblock {Isoperimetry and Gaussian analysis}.
\newblock In {\em Lectures on probability theory and statistics}, pages
  165--294. Springer, 1996.

\bibitem[Lif68]{lifshitz68}
I.~Lifshitz.
\newblock Theory of fluctuating levels in disordered systems.
\newblock {\em Sov Phys JETP}, 26(462):012110--9, 1968.

\bibitem[MKV16]{meerson16}
B.~Meerson, E.~Katzav, and A.~Vilenkin.
\newblock Large deviations of surface height in the {K}ardar{--P}arisi{--Z}hang
  equation.
\newblock {\em Phys Rev Lett}, 116(7):070601, 2016.

\bibitem[MN08]{mueller2008regularity}
C.~Mueller and D.~Nualart.
\newblock Regularity of the density for the stochastic heat equation.
\newblock {\em Electron J Probab}, 13:2248--2258, 2008.

\bibitem[MS11]{meerson11}
B.~Meerson and P.~V. Sasorov.
\newblock Negative velocity fluctuations of pulled reaction fronts.
\newblock {\em Phys Rev E}, 84(3):030101, 2011.

\bibitem[MS17]{meerson2017height}
B.~Meerson and J.~Schmidt.
\newblock Height distribution tails in the {K}ardar{--P}arisi{--Z}hang equation
  with brownian initial conditions.
\newblock {\em J Stat Mech Theory Exp}, 2017(10):103207, 2017.

\bibitem[Mue91]{mueller91}
C.~Mueller.
\newblock On the support of solutions to the heat equation with noise.
\newblock {\em Stochastics: An International Journal of Probability and
  Stochastic Processes}, 37(4):225--245, 1991.

\bibitem[MV18]{meerson2018large}
B.~Meerson and A.~Vilenkin.
\newblock Large fluctuations of a {K}ardar{-P}arisi{-Z}hang interface on a half
  line.
\newblock {\em Physical Review E}, 98(3):032145, 2018.

\bibitem[Nua06]{Nua06}
D.~Nualart.
\newblock {\em The {M}alliavin calculus and related topics}, volume 1995.
\newblock Springer, 2006.

\bibitem[QS15]{quastel2015one}
J.~Quastel and H.~Spohn.
\newblock The one-dimensional {KPZ} equation and its universality class.
\newblock {\em J Stat Phys}, 160(4):965--984, 2015.

\bibitem[Qua11]{quastel2011introduction}
J.~Quastel.
\newblock Introduction to {KPZ}.
\newblock {\em Current developments in mathematics}, 2011(1), 2011.

\bibitem[SMP17]{sasorov17}
P.~Sasorov, B.~Meerson, and S.~Prolhac.
\newblock Large deviations of surface height in the 1+1-dimensional
  {K}ardar{--P}arisi{--Z}hang equation: exact long-time results for ${\lambda}
  h< 0$.
\newblock {\em J Stat Mech Theory Exp}, 2017(6):063203, 2017.

\bibitem[Tsa18]{tsai18}
L.-C. Tsai.
\newblock Exact lower tail large deviations of the {KPZ} equation.
\newblock {\em arXiv:1809.03410}, 2018.

\bibitem[ZL66]{zittartz66}
J.~Zittartz and J.~Langer.
\newblock Theory of bound states in a random potential.
\newblock {\em Phys Rev}, 148(2):741, 1966.

\end{thebibliography}

\end{document}